\newtheorem{theorem}{Theorem}[section]
\newtheorem{lemma}[theorem]{Lemma}
\newtheorem{proposition}[theorem]{Proposition}
\newtheorem{corollary}[theorem]{Corollary}
\theoremstyle{definition}
\theoremstyle{remark}
\newtheorem{remark}[theorem]{Remark}
\newtheorem{notation}[theorem]{Notation}
\newcommand{\C}{ \mathbb C}
\newcommand{\fa}{ \mathfrak a}
\newcommand{\fock}{ \mathbb H_X}
\newcommand{\Mbar}{ {\overline{\mathfrak M}}}
\newcommand{\Pee}{ \mathbb P}
\newcommand{\Supp}{ {\rm Supp}}
\newcommand{\vac}{|0\rangle}
\newcommand{\vir}{ {\rm vir}}
\newcommand{\w}{\tilde}
\newcommand{\W}{\widetilde}
\newcommand{\Xtwo}{ {X^{[2]}}}
\newcommand{\Xn}{ {X^{[n]}}}
\newcommand{\Z}{ \mathbb Z}
\def\beq{\begin{equation}}
\def\eeq{\end{equation}}
\numberwithin{equation}{section}
\begin{document}
\title[Gromov-Witten invariants of the Hilbert schemes]
{The Gromov-Witten invariants of the Hilbert schemes of points on surfaces with $p_g > 0$}

\author[Jianxun Hu]{Jianxun Hu$^1$}
\address{Department of Mathematics,  
             Sun Yat-Sen University, Guangzhou 510275, China}
\email{stsjxhu@mail.sysu.edu.cn}
\thanks{${}^1$Partially supported by NSFC grants 11228101 and 11371381}

\author[Wei-Ping Li]{Wei-Ping Li$^2$}
\address{Department of Mathematics, HKUST, Clear Water Bay, Kowloon, Hong Kong} 
\email{mawpli@ust.hk}
\thanks{${}^2$Partially supported by the grants FSGRF12SC10 and GRF602512}

\author[Zhenbo Qin]{Zhenbo Qin$^3$}
\address{Department of Mathematics, University of Missouri, Columbia, MO 65211, USA}
\email{qinz@missouri.edu}
\thanks{${}^3$Partially supported by NSFC grant 11228101
and an MU Summer Research Fellowship}

\date{\today}
\keywords{Gromov-Witten invariants, Hilbert schemes, cosection localization}
\subjclass[2000]{Primary 14C05; Secondary 14N35.}

\begin{abstract}
In this paper, we study the Gromov-Witten theory of the Hilbert schemes $\Xn$ of points 
on smooth projective surfaces $X$ with positive geometric genus $p_g$. 
Using cosection localization technique due to Y.~Kiem and J.~Li \cite{KL1, KL2},
we prove that if $X$ is a simply connected surface admitting 
a holomorphic differential two-form with irreducible zero divisor, 
then all the Gromov-Witten invariants of $\Xn$ defined via 
the moduli space $\Mbar_{g, r}(\Xn, \beta)$ vanish except possibly when 
$\beta = d_0 \beta_{K_X} - d \beta_n$ where $d$ is an integer, $d_0 \ge 0$
is a rational number, and $\beta_n$ and $\beta_{K_X}$ are defined in \eqref{BetaN} 
and \eqref{BetaC} respectively. When $n=2$, the exceptional cases can be further
reduced to the invariants: $\langle 1 \rangle_{0, \,\, \beta_{K_X} - d\beta_2}^{\Xtwo}$
with $K_X^2 = 1$ and $d \le 3$, and $\langle 1 \rangle_{1, d\beta_2}^{\Xtwo}$ with 
$d \ge 1$. We show that when $K_X^2 = 1$,
$$
\langle 1 \rangle_{0, \,\, \beta_{K_X} - 3 \beta_2}^{\Xtwo} 
= (-1)^{\chi(\mathcal O_X)}
$$ 
which is consistent with a well-known formula of Taubes \cite{Tau}. In addition, 
for an arbitrary smooth projective surface $X$ and $d \ge 1$, we verify that 
$$
\langle 1 \rangle_{1, d\beta_2}^{\Xtwo} = {K_X^2 \over 12d}.
$$
\end{abstract}

\maketitle

\section{\bf Introduction}

Cosection localization via holomorphic two-forms was introduced by Lee and Parker \cite{LP}
in symplectic geometry and by Kiem and J. Li \cite{KL1, KL2} in algebraic geometry.
It is a localization theorem on virtual cycles such as the virtual fundamental cycles
arising from Gromov-Witten theory. Using this technique, Kiem and J. Li \cite{KL1, KL2}
studied the Gromov-Witten theory of minimal surfaces of general type, and J. Li and 
the second author \cite{LL} computed the quantum boundary operator for the Hilbert
schemes of points on surfaces. Cosection localization also played a pivotal role in 
\cite{LQ2} determining the structure of genus-$0$ extremal Gromov-Witten invariants 
of these Hilbert schemes and verifying 
the Cohomological Crepant Resolution Conjecture for the Hilbert-Chow morphisms.

In this paper, we study the Gromov-Witten theory of the Hilbert schemes $\Xn$ of points 
on smooth projective surfaces $X$ with positive geometric genus 
$p_g = h^0(X, \mathcal O_X(K_X))$. 
Let $\Mbar_{g, r}(\Xn, \beta)$ be the moduli space of stable maps $\mu$ from 
genus-$g$ nodal curves $D$ with $r$-marked points and with $\mu_*[D] = \beta$ to $\Xn$.
Let $C$ be a smooth curve in $X$, and fix distinct points 
$x_1, \ldots, x_{n-1} \in X-C$. Define
\begin{eqnarray*}   
\beta_n &=& \left \{ \xi + x_2 + \ldots + x_{n-1} \in \Xn | \Supp(\xi) 
    = \{x_1\} \right \},   \\
\beta_C &=& \left \{ x + x_1 + \ldots + x_{n-1} \in \Xn | \, x \in C \right \}.
\end{eqnarray*}
By linearity, extend the notion $\beta_C$ to an arbitrary divisor $C$ 
(see \eqref{BetaCHei} for details).
Using cosection localization technique, we obtain the following vanishing result.

\begin{theorem}    \label{Intro-ThmVanish}
Let $X$ be a simply connected surface admitting a holomorphic differential two-form 
with irreducible zero divisor. If $\beta \ne d_0 \beta_{K_X} - d \beta_n$ for 
some integer $d$ and rational number $d_0 \ge 0$,
then all the Gromov-Witten invariants of $\Xn$ defined via the moduli space 
$\Mbar_{g, r}(\Xn, \beta)$ vanish.
\end{theorem}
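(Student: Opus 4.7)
The plan is to apply the cosection localization theorem of Kiem--Li \cite{KL1, KL2} to a cosection constructed from $\omega$, and to show that the resulting degeneracy locus on $\Mbar_{g,r}(\Xn,\beta)$ is empty whenever $\beta$ is not of the stated form. First I would promote $\omega\in H^0(X,K_X)$ (with irreducible zero divisor $Z(\omega)$) to $\omega^{[n]}\in H^0(\Xn,\Omega^2_{\Xn})$ via Beauville's construction. On the reduced stratum, at $\xi=\{x_1,\ldots,x_n\}\in\Xn$ with $T_\xi\Xn=\bigoplus_i T_{x_i}X$, the form $\omega^{[n]}$ decomposes as $\bigoplus_i\omega|_{x_i}$; its degeneracy locus is therefore $W:=\{\xi:\Supp(\xi)\cap Z(\omega)\neq\emptyset\}$, with pointwise kernel $\bigoplus_{x_i\in Z(\omega)}T_{x_i}X$. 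Contraction by $\omega^{[n]}$ produces $T_{\Xn}\to\Omega_{\Xn}$, and for a family of stable maps $\mu:\mathcal C\to\Xn$ over $S$, the composition $\mu^*T_{\Xn}\to\mu^*\Omega_{\Xn}\to\omega_{\mathcal C/S}$, pushed by $R^1\pi_*$ and dualized via Serre duality, gives the cosection $\sigma$ of the obstruction sheaf required by \cite{KL1, KL2}. Their theorem then supports $[\Mbar_{g,r}(\Xn,\beta)]^{\vir}$ on the vanishing locus of $\sigma$.

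Next I would characterize this vanishing locus pointwise. At a moduli point $[\mu:D\to\Xn]$, $\sigma$ is the $H^1$-map induced by a sheaf map $\phi:\mu^*T_{\Xn}\to\omega_D$; factoring $\phi$ through its image $\omega_D(-E)$ and using $H^2$-vanishing on the curve $D$ shows that the induced $H^1$-map is surjective whenever $\phi\neq 0$. Hence $\sigma=0$ at $[\mu]$ exactly when $\phi=0$, equivalently when $d\mu(T_D)\subset\ker\omega^{[n]}$ pointwise on $D$. Consequently every nonconstant irreducible component of $D$ must map into $W$ with tangent direction in $\ker\omega^{[n]}$.

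The last step converts this geometric constraint into the numerical one. Composing with the Hilbert--Chow morphism $\rho:\Xn\to\SXn$ and lifting along $X^n\to\SXn$ after an étale base change $\tilde D\to D$ yields $\tilde\mu=(f_1,\ldots,f_n):\tilde D\to X^n$, for which the tangent condition reads: at every $p\in\tilde D$ and every $i$, either $df_i|_p=0$ or $f_i(p)\in Z(\omega)$. Every nonconstant $f_i$ therefore satisfies $f_i(\tilde D)\subset Z(\omega)$, and irreducibility of $Z(\omega)$ forces $(f_i)_*[\tilde D]\in\mathbb Q_{\ge 0}\cdot K_X$. Summing over $i$ gives $\rho_*\beta\in\mathbb Q_{\ge 0}\cdot K_X$. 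Using that $\ker\rho_*$ on $H_2(\Xn,\Z)$ is $\Z\beta_n$ (valid since $X$ is simply connected), one concludes $\beta=d_0\beta_{K_X}-d\beta_n$ with $d_0\in\mathbb Q_{\ge 0}$ and $d\in\Z$, which is the contrapositive of the claimed vanishing.

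The principal technical hurdle will be the tangent-kernel analysis at nonreduced $\xi\in\Xn$, where the clean product decomposition of $T_\xi\Xn$ fails and the identification of $\ker\omega^{[n]}$ becomes more delicate. I expect to handle this by exploiting Beauville's identification of $\omega^{[n]}$ as the extension across the exceptional divisor of the descended symmetric form from $\SXn$, together with a density argument over the open stratum of distinct points, reducing the pointwise tangent-kernel check to the already-analyzed reduced case.
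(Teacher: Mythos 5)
Your overall strategy is the paper's: Beauville's induced two-form $\theta^{[n]}$ gives a Kiem--Li cosection of the obstruction sheaf, the localized virtual class is supported on the degeneracy locus, and one shows that a nonempty degeneracy locus forces $\beta=d_0\beta_{K_X}-d\beta_n$, so the invariants vanish for all other $\beta$. Your last step (push forward by Hilbert--Chow and use that $\ker\rho_*$ is spanned by $\beta_n$) is an equivalent repackaging of the paper's Lemma 3.3, which instead pairs $u_*[\Gamma]$ against the divisors $D_C$ and reads off the class from the dual bases of $H^2(\Xn)$ and $H_2(\Xn)$.

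The one genuine gap is in the degeneracy-locus analysis at non-reduced subschemes, and the fix you propose does not close it. The pointwise condition ``either $df_i|_p=0$ or $f_i(p)\in Z(\omega)$'' is derived from the splitting $T_\xi\Xn=\bigoplus_i T_{x_i}X$, which exists only where $u(p)$ is reduced; a density argument over the stratum of distinct points says nothing about a component of $D$ whose \emph{generic} image point is non-reduced, and such components cannot be excluded a priori. For instance $u(p)$ could be $\eta(p)+\xi_2(p)$ with $\eta(p)$ a moving length-two punctual scheme supported off $Z(\omega)$ and $\xi_2(p)$ supported on $Z(\omega)$: the image never meets the reduced stratum, and its support always meets $Z(\omega)$, so neither your density reduction nor the bare non-degeneracy of $\theta^{[n]}$ away from $\{\xi:\Supp(\xi)\cap Z(\omega)\neq\emptyset\}$ rules out the motion of $\eta$. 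What actually closes this case (Lemma 3.1 of the paper) is to decompose $u(p)=\xi_1(p)+\xi_2(p)$ by support into the part off $Z(\omega)$ and the part on $Z(\omega)$, use the local product structure $X^{[n_0]}\times X^{[n-n_0]}$ of $\Xn$ near such points, under which $\theta^{[n]}$ splits, and invoke Beauville's theorem that $\theta^{[n_0]}$ is symplectic on all of $(X\setminus Z(\omega))^{[n_0]}$ --- including at non-reduced punctual schemes, a fact that is \emph{not} deducible by density from the reduced locus, since a form non-degenerate on a dense open set can still degenerate on the boundary. This forces the factor $u_1$ to be constant and yields the containment $u(\Gamma_0)\subset \xi_1+\{\xi_2:\Supp(\xi_2)\subset Z(\omega)\}$ that your final numerical step needs. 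With that lemma substituted for your density argument, the rest of your proposal goes through.
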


When $n=2$ and $X$ is further assumed to be a minimal surface of general type, 
the possible non-vanishing Gromov-Witten invariants
$\langle \alpha_1, \ldots, \alpha_r \rangle_{g, \beta}^{\Xtwo}$ (see \eqref{def-GW} 
for the precise definition) can be reduced to the $1$-point invariants 
calculated in \cite{LQ1} and the following two types of invariants: 
\begin{enumerate}
\item[{\rm (i)}] $\langle 1 \rangle_{1, d\beta_2}^{\Xtwo}$ with $d \ge 1$;

\item[{\rm (ii)}] $\langle 1 \rangle_{0, \,\, \beta_{K_X} - d\beta_2}^{\Xtwo}$ with $K_X^2 = 1$, 
$1 \le p_g \le 2$ and $d \le 3$.
\end{enumerate}
These two types of Gromov-Witten invariants are investigated via detailed analyses 
of the corresponding virtual fundamental cycles. 

\begin{theorem}   \label{Intro-theorem_ii}
Let $d \ge 1$. Let $X$ be a smooth projective surface. Then, 
$$
\langle 1 \rangle_{1, d\beta_2}^{\Xtwo} = {K_X^2 \over 12d}.
$$
\end{theorem}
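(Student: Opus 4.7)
Our plan is to (i) reduce $M := \Mbar_{1, 0}(\Xtwo, d\beta_2)$ to a fibration over $X$, (ii) decompose the obstruction bundle via the normal sequence of the exceptional divisor, (iii) express the invariant as a universal $\mathbb{Q}$-linear combination of the Chern numbers $K_X^2$ and $c_2(X)$, (iv) kill the $c_2(X)$-coefficient via a K3 specialization, and (v) pin down the $K_X^2$-coefficient on a test surface.

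Because $d\beta_2$ is contracted by the Hilbert--Chow morphism, every stable map in class $d\beta_2$ has image in a single $\Pee^1$-fibre of the exceptional divisor $E = \Pee(T_X) \to X$. Hence $M$ fibres over $X$ via a natural morphism $q: M \to X$ with fibre $F = \Mbar_{1, 0}(\Pee^1, d)$, so $\dim M = 2d + 2$, whereas the virtual dimension is $0$, and the obstruction bundle $\text{Obs}$ has rank $2d + 2$. The normal sequence $0 \to T_E \to T_{\Xtwo}|_E \to N_{E/\Xtwo} \to 0$ gives $e(\text{Obs}) = e(\text{Obs}_E)\cdot e(\text{Obs}_N)$. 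Using the Euler sequence for $\pi: E \to X$ and that $T_\pi|_{\text{fibre}} = \mathcal{O}(2)$ has vanishing $R^1$, the $T_E$-contribution reduces to the rank-$2$ bundle
\[
 \text{Obs}_E \;\cong\; \mathbb{E}^\vee \otimes q^* T_X,
\]
where $\mathbb{E}$ is the Hodge bundle of the fibres. An adjunction computation on the projective bundle $E = \Pee(T_X)$ gives $N_{E/\Xtwo} = \mathcal{O}_E(-2) \otimes \pi^* K_X$, so the $N$-contribution is
\[
 \text{Obs}_N \;=\; V \otimes q^* K_X, \qquad V := R^1\pi_{M*}(f^* \mathcal{O}_E(-2)) \quad (\text{rank } 2d).
\]

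Expanding
\[
 e(\text{Obs}_E) = \lambda^2 + \lambda\, q^*K_X + q^*c_2(X), \qquad e(\text{Obs}_N) = \sum_{i=0}^{2d} c_i(V)\,(q^*K_X)^{2d-i},
\]
multiplying, and pushing forward via $q$ (using $K_X^k = 0$ on $X$ for $k \ge 3$), the invariant reduces to a universal formula
\[
 \langle 1 \rangle_{1, d\beta_2}^{\Xtwo} = A(d)\, K_X^2 + B(d)\, c_2(X),
\]
with $A(d), B(d) \in \mathbb{Q}$ depending only on $d$ through tautological fibre integrals on $F$. Specializing $X$ to a K3 surface, the holomorphic symplectic form on $X$ lifts to a nowhere-vanishing holomorphic symplectic form on $\Xtwo$ whose associated cosection is globally surjective; by Kiem--Li cosection localization, $[\Mbar_{1, 0}(\Xtwo, \beta)]^{\vir} = 0$ for every non-zero $\beta$. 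Since $K_X = 0$ and $c_2(X) = 24$ on K3, we conclude $B(d) = 0$.

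It remains to identify $A(d)$. Specialize to $X = \Pee^2$ (where $K_X^2 = 9$) and compute $\langle 1 \rangle_{1, d\beta_2}^{(\Pee^2)^{[2]}}$ by $\T$-equivariant virtual localization on $(\Pee^2)^{[2]}$: the $\T$-fixed loci sit over the three $\T$-fixed points of $\Pee^2$, and the contributions reduce, after bookkeeping of the equivariant weights on $\mathcal{O}_E(-1)$ and on $q^* K_X$, to classical Hodge integrals centered on $\int_{\Mbar_{1, 1}} \psi_1 = 1/24$ together with the $1/d^3$ multiple-cover formula. Extracting the scalar yields $A(d) = 1/(12d)$, which combined with $B(d) = 0$ completes the proof.

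The main obstacle is this final localization computation. The structural steps above and the K3 argument dispatch $c_2(X)$ essentially for free, but isolating the scalar $1/(12d)$ requires a genuine Hodge-integral input and careful control of the equivariant weights contributing at each fixed locus; simplifications such as Mumford's relation $\lambda^2 = 0$ on $\Mbar_{1, n}$ are likely essential to reduce to a tractable Hodge integral.
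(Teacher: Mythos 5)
Your structural reduction is essentially the paper's: the fibration $\phi:\Mbar_{1,0}(\Xtwo,d\beta_2)\to X$, the rank-$(2d+2)$ obstruction bundle, its two-step decomposition coming from the normal sequence of $B_2\subset\Xtwo$ with first piece $\mathcal H^\vee\otimes\phi^*T_X$, Mumford's relation $\lambda^2=0$, and a K3 specialization to kill the $c_2(X)$-contribution. (The paper kills that contribution at the level of the fibre integral, showing $\int_{[\Mbar_{1,0}(\Pee^1,d[\Pee^1])]}c_{2d}\big(R^1(f_{1,0})_*{\rm ev}_1^*\mathcal O_{\Pee^1}(-2)\big)=0$ by identifying it with $\deg[\Mbar_{1,0}(S,d[C])]^{\vir}$ for a $(-2)$-curve $C$ on a K3 surface $S$; your global cosection argument on $S^{[2]}$ accomplishes the same thing and is fine, granted the universality of the expression $A(d)K_X^2+B(d)c_2(X)$, which you assert rather than derive but which does hold.)

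The gap is the determination of $A(d)$, which is the actual content of the theorem. You assert that a $\T$-equivariant localization on $(\Pee^2)^{[2]}$ ``yields $A(d)=1/(12d)$,'' but you do not carry out that computation and you yourself flag it as the main obstacle; moreover the ingredient you name, the genus-$0$ multiple-cover formula $1/d^3$, is not the relevant input in genus $1$. The paper avoids localization on the Hilbert scheme entirely: it writes $K_X=[C_1]-[C_2]$ for smooth curves $C_i$, so the surviving term $\phi^*K_X\cdot\lambda\cdot c_{2d}\big(R^1(f_{1,0})_*\widetilde{\rm ev}_1^*\mathcal O_{B_2}(-2)\big)$ becomes a difference of integrals over $\Mbar_{1,0}(\Pee(T_X^\vee|_{C_i}),df)$, and Proposition~\ref{PropDegV} evaluates each as $\deg(T_X^\vee|_{C_i})/(12d)$, giving $K_X^2/(12d)$. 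That proposition is in turn reduced, via deformation to split bundles and the test surfaces $\Pee^1\times\Pee^1$ and $\mathbb F_1$, to the single fibre integral $\int_{[\Mbar_{1,0}(\Pee^1,d[\Pee^1])]}\lambda\cdot c_{2d-1}\big(R^1(f_{1,0})_*{\rm ev}_1^*\mathcal O_{\Pee^1}(-2)\big)=-1/(12d)$, which follows from the Euler sequence on $\Pee^1$ together with the Graber--Pandharipande genus-$1$ multiple-cover formula for $\mathcal O_{\Pee^1}(-1)^{\oplus 2}$. Until you supply an input of this kind, or actually execute the $(\Pee^2)^{[2]}$ localization with its weight bookkeeping, the value $1/(12d)$ is asserted rather than proved.
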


It follows that if $X$ is a simply connected surface admitting a holomorphic differential 
two-form with irreducible zero divisor and satisfying $K_X^2 > 1$,
then all the Gromov-Witten invariants (without descendant insertions)
of $\Xtwo$ can be determined; moreover, the quantum cohomology of $\Xtwo$
coincides with its quantum corrected cohomology \cite{LQ1, LQ2}.

\begin{theorem}   \label{Intro-theorem_iii}
Let $X$ be a simply connected minimal surface of general type with 
$K_X^2 = 1$ and $1 \le p_g \le 2$ such that every member in $|K_X|$ is smooth. Then,
\begin{enumerate}
\item[{\rm (i)}] $\Mbar_{0, 0}(\Xtwo, \beta_{K_X} - 3\beta_2) \cong |K_X| \cong \Pee^{p_g-1}$;

\item[{\rm (ii)}] $\langle 1 \rangle_{0, \,\, \beta_{K_X} - 3 \beta_2}^{\Xtwo} 
= (-1)^{\chi(\mathcal O_X)}$.
\end{enumerate}
\end{theorem}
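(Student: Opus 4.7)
\emph{Part (i).} With $K_X^2 = 1$, adjunction forces every smooth $C \in |K_X|$ to have arithmetic genus $2$, and every smooth genus-$2$ curve is hyperelliptic with a unique degree-$2$ map to $\Pee^1$. The fibers of this hyperelliptic map form a flat family of length-$2$ subschemes of $X$, yielding a stable map $f_C \colon \Pee^1 \to \Xtwo$. I would verify that $f_{C*}[\Pee^1] = \beta_{K_X} - 3\beta_2$ by pushing forward to $X^{(2)}$ (the cycle projects to $[K_X]$, contributing $\beta_{K_X}$) and intersecting with the exceptional divisor $E$: the six Weierstrass points of $C$ produce six non-reduced length-$2$ subschemes, so $E \cdot f_{C*}[\Pee^1] = 6$, which together with $E\cdot\beta_2=-2$ and $E\cdot\beta_{K_X}=0$ forces the coefficient $-3$. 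Conversely, for any genus-$0$ stable map $\mu \colon D \to \Xtwo$ in this class, I would analyze the associated flat degree-$2$ family $Z \subset X \times D$; its scheme-theoretic image in $X$ has cycle class $[K_X]$ and, by the smoothness hypothesis on $|K_X|$, must be a smooth $C \in |K_X|$. The resulting degree-$2$ map $Z \to D$ factors through $C$ and must coincide with the hyperelliptic structure, so $D \cong \Pee^1$ and $\mu = f_C$. A standard universal-family argument upgrades this bijection to a scheme isomorphism $\Mbar_{0,0}(\Xtwo, \beta_{K_X} - 3\beta_2) \cong |K_X|$, proving (i).

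\emph{Part (ii).} Since the Hilbert-Chow morphism is crepant, $K_{\Xtwo} \cdot \beta_2 = 0$, whence $K_{\Xtwo} \cdot (\beta_{K_X} - 3\beta_2) = K_X^2 = 1$ and the virtual dimension is $-1 + 4 - 3 = 0$. With $|K_X| \cong \Pee^{p_g-1}$ smooth of dimension $p_g - 1$, the virtual fundamental class takes the form $c_{p_g-1}(\mathcal{V}) \cap [|K_X|]$ for an obstruction bundle $\mathcal{V}$ of rank $p_g - 1$, whose fiber at $C$ is $H^1(\Pee^1, f_C^* T_{\Xtwo})$. I would pin down $\mathcal{V}$ by assembling the universal picture: let $\mathcal{C} \subset X \times |K_X|$ be the universal canonical curve and $\mathcal{P} \to |K_X|$ the $\Pee^1$-bundle obtained from the relative hyperelliptic structure, with universal map $F \colon \mathcal{P} \to \Xtwo$; then $\mathcal{V} = R^1 \pi_*(F^* T_{\Xtwo})$ for $\pi\colon\mathcal P\to|K_X|$. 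Short exact sequences comparing $T_{\Xtwo}$ with the tangent sheaves of $X$ and $X^{(2)}$, combined with the cosection of $\mathcal{V}$ induced by the holomorphic $2$-form $\omega \in H^0(X, K_X)$ (in the sense of Kiem-Li), should reduce the problem to a tautological Chern-class computation on $\Pee^{p_g-1}$.

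The main obstacle is producing a clean tautological description of $\mathcal{V}$; once in hand, the Euler-class computation should yield
\[
\int_{|K_X|} c_{p_g-1}(\mathcal{V}) \;=\; (-1)^{p_g+1} \;=\; (-1)^{\chi(\mathcal{O}_X)},
\]
where the last equality uses $q(X) = 0$ for simply connected $X$. This sign reflects the expected compatibility with Taubes' formula $SW(K_X) = \pm 1$ for minimal surfaces of general type, and extracting it precisely via the cosection localization and a careful tracking of orientations is where I expect the most delicate work.
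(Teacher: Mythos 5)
Your part (i) follows essentially the same route as the paper: identify the stable maps in class $\beta_{K_X}-3\beta_2$ with the hyperelliptic pencils $g_2^1(C)$ for $C\in|K_X|$, check the class via the boundary intersection number (your count $B_2\cdot\Gamma=6$ from the six Weierstrass points, giving the coefficient $-3$, matches the paper's Lemma on the lower bound $d\ge-3$), and then globalize over $|K_X|$. Two points you gloss over are handled carefully in the paper and do need an argument. First, ruling out stable maps whose image has extra components in classes $d\beta_2$ (fibers of the Hilbert--Chow morphism): this is where the sharp inequality $d\ge -3$ for irreducible curves in class $\beta_C+d\beta_2$, proved via the Abel--Jacobi map and the nefness of the theta divisor on $C^{(2)}$, is used. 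Second, ``a standard universal-family argument upgrades this bijection to a scheme isomorphism'' is not automatic: a bijective morphism onto a possibly non-reduced moduli space need not be an isomorphism. The paper first computes $T_{\Xtwo}|_{g_2^1(C)}=\mathcal O(2)\oplus\mathcal O(-1)\oplus\mathcal O(-p_g)\oplus\mathcal O(p_g-2)$ (which requires identifying $N_{C^{(2)}\subset\Xtwo}|_{g_2^1(C)}$ via $\pi_{1*}\pi_2^*\mathcal O_X(K_X)$ and the vanishing $h^1(X,\mathcal O_X)=0$), deduces $h^1(\Pee^1,\mu_0^*T_{\Xtwo})=p_g-1$, applies Behrend's criterion to conclude $\Mbar$ is smooth of dimension $p_g-1$, and only then invokes Zariski's Main Theorem.

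For part (ii) there is a genuine gap, and you acknowledge it: the entire content of the theorem for $p_g=2$ is the evaluation $\deg c_1(\mathcal V)=-1$ on $|K_X|\cong\Pee^1$, and your proposal stops at ``the main obstacle is producing a clean tautological description of $\mathcal V$.'' The cosection induced by the holomorphic $2$-form will not help here: the degeneracy locus of the cosection is all of $\Mbar_{0,0}(\Xtwo,\beta_{K_X}-3\beta_2)$ (every map in this class has image supported on a canonical curve), so cosection localization yields no reduction of the obstruction bundle. What the paper actually does is a direct geometric computation: the universal $\Pee^1$-family $\mathcal E$ is the exceptional divisor of the blow-up of the relative Jacobian $\mathrm{Jac}_2(\mathcal C/|K_X|)$ along the canonical section, and is identified with the Hirzebruch surface $\mathbb F_2$; the universal canonical curve $\mathcal C$ is the blow-up $\W X$ of $X$ at the base point of the pencil, double-covering $\mathcal E$ with branch locus $6\sigma+10\Gamma$; Grothendieck--Riemann--Roch gives $c_1(\w p_*\mathcal O_{\W X}(E))=-2\sigma-5\Gamma$, whence an extension $0\to\mathcal O_{\mathcal E}\to\w p_*\mathcal O_{\W X}(E)\to\mathcal O_{\mathcal E}(-2\sigma-5\Gamma)\to 0$ and finally $R^1f_*\Psi^*T_{\Xtwo}\cong\mathcal O_{\Pee^1}(-1)$. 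None of this structure (the $\mathbb F_2$ identification, the branch-locus computation, the GRR step) is present in your plan, so the sign $(-1)^{\chi(\mathcal O_X)}$ is asserted by analogy with Taubes' formula rather than derived.
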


We remark that our formula in Theorem~\ref{Intro-theorem_iii}~(ii) is consistent with 
$$
\langle 1 \rangle_{K_X^2+1, \,\, K_X}^X = (-1)^{\chi(\mathcal O_X)}
$$
which is a well-known formula of Taubes \cite{Tau}
obtained via an interplay between Seiberg-Witten theory and Gromov-Witten theory. 

This paper is organized as follows. In \S 2, we briefly review Gromov-Witten theory.
In \S 3, Theorem~\ref{Intro-ThmVanish} is proved. In \S 4, we compute some intersection
numbers on certain moduli spaces of genus-$1$ stable maps. In \S 5, 
we study the homology classes of curves in Hilbert schemes of
points on surfaces. In \S 6, using the results from the previous two sections, 
we verify Theorem~\ref{Intro-theorem_ii} and Theorem~\ref{Intro-theorem_iii}. 


\medskip\noindent
{\bf Acknowledgment}: We thank Professors Dan Edidin and Jun Li for valuable helps. 
The third author also thanks HKUST and Sun Yat-Sen University for their hospitality 
and financial support during his visits in Summers 2013 and 2014.
\section{Stable maps and Gromov-Witten invariants}   
\label{Stable}

In this section, we will briefly review the notions of stable maps 
and Gromov-Witten invariants. We will also recall a result of Behrend from \cite{Beh}.

Let $Y$ be a smooth projective variety.
An $r$-pointed stable map to $Y$ consists of
a complete nodal curve $D$ with $r$ distinct ordered smooth points
$p_1, \ldots, p_r$ and a morphism $\mu: D \to Y$ such that
the data $(\mu, D, p_1, \ldots, p_r)$ has only finitely many automorphisms.
In this case, the stable map is denoted by
$[\mu: (D; p_1, \ldots, p_r) \to Y]$.
For a fixed homology class $\beta \in H_2(Y, \mathbb Z)$,
let $\overline {\frak M}_{g, r}(Y, \beta)$ be the coarse moduli space
parameterizing all the stable maps $[\mu: (D; p_1, \ldots, p_r) \to Y]$
such that $\mu_*[D] = \beta$ and the arithmetic genus of $D$ is $g$.
Then, we have the $i$-th evaluation map:
\begin{eqnarray}\label{evk}
{\rm ev}_i \colon \overline {\frak M}_{g, r}(Y, \beta) \to Y  
\end{eqnarray}
defined by ${\rm ev}_i([\mu: (D; p_1, \ldots, p_r) \to Y]) = 
\mu(p_i)$. It is known \cite{LT1, LT2, BF} that 
the coarse moduli space $\overline {\frak M}_{g, r}(Y, \beta)$ is projective and
has a virtual fundamental class
$[\overline {\frak M}_{g, r}(Y, \beta)]^{\text{vir}} \in
A_{\frak d}(\overline {\frak M}_{g, r}(Y, \beta))$ where
\begin{eqnarray}\label{expected-dim}
\frak d = -(K_Y \cdot \beta) + (\dim (Y) - 3)(1-g) + r 
\end{eqnarray}
is the expected complex dimension of 
$\overline {\frak M}_{g, r}(Y, \beta)$, 
and $A_{\frak d}(\overline {\frak M}_{g, r}(Y, \beta))$
is the Chow group of $\frak d$-dimensional cycles in 
the moduli space $\overline {\frak M}_{g, r}(Y, \beta)$. 

The Gromov-Witten invariants are defined by using
the virtual fundamental class
$[\overline {\frak M}_{g, r}(Y, \beta)]^{\text{vir}}$.
Recall that an element
$\alpha \in H^*(Y, \mathbb C) {\buildrel\text{def}\over=}
\bigoplus_{j=0}^{2 \dim_{\mathbb C}(Y)} H^j(Y, \mathbb C)$ is 
{\it homogeneous} if $\alpha \in H^j(Y, \mathbb C)$ for some $j$; 
in this case, we take $|\alpha| = j$. 
Let $\alpha_1, \ldots, \alpha_r \in H^*(Y, \mathbb C)$
such that every $\alpha_i$ is homogeneous and
\begin{eqnarray}\label{homo-deg}
\sum_{i=1}^r |\alpha_i| = 2 {\frak d}.  
\end{eqnarray}
Then, we have the $r$-point Gromov-Witten invariant defined by:
\begin{eqnarray}\label{def-GW}
\langle \alpha_1, \ldots, \alpha_r \rangle_{g, \beta}^Y \,\,
= \int_{[\overline {\frak M}_{g, r}(Y, \beta)]^{\text{vir}}}
{\rm ev}_1^*(\alpha_1) \otimes \ldots \otimes {\rm ev}_r^*(\alpha_r).  
\end{eqnarray}

Next, we recall that {\it the excess dimension} is the difference between 
the dimension of $\overline {\frak M}_{g, r}(Y, \beta)$ and
the expected dimension $\frak d$ in (\ref{expected-dim}).
Let $T_Y$ stand for the tangent sheaf of $Y$. For $0 \le i < r$, we shall use 
\begin{eqnarray}\label{r-to-i}
f_{r, i}: \overline {\frak M}_{g, r}(Y, \beta) \to
\overline {\frak M}_{g, i}(Y, \beta)  
\end{eqnarray}
to stand for the forgetful map
obtained by forgetting the last $(r-i)$ marked points
and contracting all the unstable components.
It is known that $f_{r, i}$ is flat when $\beta \ne 0$ and $0 \le i < r$.
The following can be found in \cite{Beh}.

\begin{proposition}   \label{virtual-prop} 
Let $\beta \in H_2(Y, \mathbb Z)$ and $\beta \ne 0$.
Let $e$ be the excess dimension of the moduli space $\overline {\frak M}_{g, r}(Y, \beta)$. 
If $R^1(f_{r+1, r})_*({\rm ev}_{r+1})^*T_Y$ is a rank-$e$ locally free sheaf
over $\overline {\frak M}_{g, r}(Y, \beta)$, then $\overline {\frak M}_{g, r}(Y, \beta)$ 
is smooth (as a stack) of dimension 
\begin{eqnarray}   \label{virtual-prop.0}
\frak d + e = -(K_Y \cdot \beta) + (\dim (Y) - 3)(1-g) + r + e,
\end{eqnarray}
and $[\overline {\frak M}_{g, r}(Y, \beta)]^{\text{\rm vir}} 
= c_e \big (R^1(f_{r+1, r})_*({\rm ev}_{r+1})^*T_Y \big ) 
\cap [\overline {\frak M}_{g, r}(Y, \beta)/\overline {\frak M}_{g, r}]$.
\end{proposition}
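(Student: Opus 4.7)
The plan is to invoke Behrend-Fantechi's theory of relative perfect obstruction theories for moduli of stable maps. First, I would recall that, after stabilization, the forgetful map $f_{r+1,r}$ realizes $\Mbar_{g,r+1}(Y,\beta)$ as the universal curve over $\Mbar_{g,r}(Y,\beta)$, with ${\rm ev}_{r+1}$ serving as the universal stable map. The two-term complex $R(f_{r+1,r})_*{\rm ev}_{r+1}^*T_Y$ then underlies the relative perfect obstruction theory for the canonical morphism $\Mbar_{g,r}(Y,\beta) \to \mathfrak M_{g,r}$ to the smooth Artin stack of prestable $r$-pointed genus-$g$ curves (of dimension $3g-3+r$); it governs deformations and obstructions of a stable map while fixing the marked source curve.

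Next, I would exploit the hypothesis that $R^1(f_{r+1,r})_*{\rm ev}_{r+1}^*T_Y$ is locally free of rank $e$. By the cohomology and base change theorem applied to the proper flat morphism $f_{r+1,r}$, local freeness of $R^1$ together with upper semicontinuity forces $R^0(f_{r+1,r})_*{\rm ev}_{r+1}^*T_Y$ to be locally free as well. Its rank is computed fiberwise by Riemann-Roch on the source curve: $\chi(\mu^*T_Y) = -(K_Y\cdot\beta) + \dim(Y)(1-g)$, which is precisely the expected relative dimension of $\Mbar_{g,r}(Y,\beta) \to \mathfrak M_{g,r}$; hence $\mathrm{rank}(R^0) = -(K_Y\cdot\beta) + \dim(Y)(1-g) + e$.

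With both terms of the relative obstruction theory being vector bundles, the stack $\Mbar_{g,r}(Y,\beta)$ is smooth over the smooth Artin stack $\mathfrak M_{g,r}$, and adding $3g-3+r$ to $\mathrm{rank}(R^0)$ together with \eqref{expected-dim} produces the total dimension $\mathfrak d + e$ asserted in \eqref{virtual-prop.0}. The virtual class formula then follows from the general principle that when a perfect obstruction theory reduces to a two-term complex of vector bundles over a smooth ambient stack, the intrinsic normal cone sits inside the obstruction bundle as the zero section, and the associated virtual cycle is the top Chern class of the obstruction bundle capped with the ordinary fundamental class; this yields the expression with $c_e$ of $R^1(f_{r+1,r})_*{\rm ev}_{r+1}^*T_Y$.

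The main obstacle I anticipate is twofold: first, the careful invocation of cohomology and base change to upgrade local freeness of $R^1$ to local freeness of $R^0$, since only the former is assumed; and second, the bookkeeping needed to pass between the relative obstruction theory over $\mathfrak M_{g,r}$ (which is what the complex $R(f_{r+1,r})_*{\rm ev}_{r+1}^*T_Y$ directly computes) and the absolute virtual fundamental class of $\Mbar_{g,r}(Y,\beta)$ appearing in the statement. Both points are standard once one fixes the conventions of \cite{Beh}, but they are the genuine content of the proof beyond the formal Euler-class extraction.
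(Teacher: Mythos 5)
The paper does not prove this proposition at all: it is quoted verbatim from Behrend's paper (the text says ``The following can be found in \cite{Beh}''), so there is no internal proof to compare against. Your outline is a faithful reconstruction of the standard argument behind that citation: the complex $R(f_{r+1,r})_*{\rm ev}_{r+1}^*T_Y$ is the relative perfect obstruction theory for $\Mbar_{g,r}(Y,\beta)\to\mathfrak M_{g,r}$, local freeness of $R^1$ plus local constancy of $\chi(\mu^*T_Y)=-(K_Y\cdot\beta)+\dim(Y)(1-g)$ upgrades $R^0$ to a bundle of rank $\chi+e$, and adding $\dim\mathfrak M_{g,r}=3g-3+r$ gives $\mathfrak d+e$; the virtual class then becomes the Euler class of the obstruction bundle by the excess-intersection computation. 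The one step you assert rather than argue is the smoothness implication, and your stated reason --- ``both terms of the relative obstruction theory being vector bundles'' --- is not the right one: every perfect obstruction theory is locally represented by a two-term complex of vector bundles, so that by itself implies nothing. What does the work is that the \emph{cohomology sheaves} $R^0$ and $R^1$ (the relative tangent and obstruction sheaves) are locally free; in a local Kuranishi/cut-out model $Z(s)\subset V$ this forces $ds$ to have locally free cokernel, which in characteristic zero forces the defining equations to vanish identically, whence smoothness over $\mathfrak M_{g,r}$. Relatedly, you never actually use that $e$ is the excess dimension; that hypothesis turns out to be automatic once local freeness is granted, but a complete write-up should either use it or explain why it is redundant. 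With that step made precise, your argument is correct and is exactly the content of the cited result.
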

\section{\bf Vanishing of Gromov-Witten invariants}
\label{sect_Vanishing}

In this section, we will recall some basic notations regarding the Hilbert schemes of 
points on surfaces, and prove Theorem~\ref{Intro-ThmVanish}.

Let $X$ be a smooth projective complex surface, and $\Xn$ be the Hilbert scheme of points in $X$.
An element in $\Xn$ is represented by a length-$n$ $0$-dimensional closed subscheme $\xi$ of $X$. 
For $\xi \in \Xn$, let $I_{\xi}$ and  $\mathcal O_\xi$ be the corresponding sheaf of ideals and 
structure sheaf respectively. It is known from \cite{Fog1, Iar} that $\Xn$ is 
a smooth irreducible variety of dimension $2n$. 
The universal codimension-$2$ subscheme is 
\beq    \label{UnivZn}
\mathcal Z_n=\{(\xi, x) \in \Xn \times X \, |\, x\in \Supp{(\xi)}\} 
\quad \subset \Xn \times X.
\eeq
The boundary of $\Xn$ is defined to be the subset 
$$
B_n = \left \{ \xi \in \Xn|\, |\Supp{(\xi)}| < n \right \}.
$$
Let $C$ be a real-surface in $X$, and fix distinct points $x_1, \ldots, x_{n-1} \in X$ 
which are not contained in $C$. Define the subsets
\begin{eqnarray}   
\beta_n &=& \left \{ \xi + x_2 + \ldots + x_{n-1} \in \Xn | \Supp(\xi) = \{x_1\} \right \},   
        \label{BetaN}   \\
\beta_C &=& \left \{ x + x_1 + \ldots + x_{n-1} \in \Xn | \, x \in C \right \},   
        \label{BetaC}   \\
D_C &=& \left \{ \xi \in \Xn | \, \Supp(\xi) \cap C \ne \emptyset \right \}.  
        \label{DC}
\end{eqnarray}
Note that $\beta_C$ (respectively, $D_C$) is a curve (respectively, a divisor) in $\Xn$ when 
$C$ is a smooth algebraic curve in $X$. We extend the notions $\beta_C$ and $D_C$ to 
all the divisors $C$ in $X$ by linearality. For a subset $Y \subset X$, define
$$
M_n(Y) = \{ \xi \in \Xn| \, \Supp(\xi) \text{ is a point in $Y$}\}.
$$

Nakajima \cite{Nak} and Grojnowski \cite {Gro} geometrically constructed a Heisenberg algebra 
action on the cohomology of the Hilbert schemes $\Xn$. Denote the Heisenberg operators by 
$\fa_m(\alpha)$ where $m \in \Z$ and $\alpha \in H^*(X, \C)$. Put
$$
\fock = \bigoplus_{n=0}^{+ \infty} H^*(\Xn, \C).
$$
Then the space $\fock$ is an irreducible representation of the Heisenberg algebra 
generated by the operators $\fa_m(\alpha)$ with the highest weight vector being
$\vac = 1 \in H^*(X^{[0]}, \C) = \C$. It follows that the $n$-th component 
$H^*(\Xn, \C)$ in $\fock$ is linearly spanned by the {\it Heisenberg monomial classes}:
$$
\mathfrak a_{-n_1}(\alpha_1) \cdots \mathfrak a_{-n_k}(\alpha_k) \vac
$$
where $k \ge 0$, $n_1, \ldots, n_k > 0$, and $n_1 + \ldots + n_k = n$.  We have
\begin{eqnarray}   
\beta_n &=& \fa_{-2}(x) \fa_{-1}(x)^{n-2} \vac,   \label{BetaNHei}   \\
\beta_C &=& \fa_{-1}(C) \fa_{-1}(x)^{n-1} \vac,    \label{BetaCHei}   \\
B_n &=& {1 \over (n-2)!} \fa_{-1}(1_X)^{n-2} \fa_{-2}(1_X) \vac,    \label{BNHei}   \\
D_C &=& {1 \over (n-1)!} \fa_{-1}(1_X)^{n-1} \fa_{-1}(C) \vac         \label{DCHei}
\end{eqnarray}
where $x$ and $1_X$ denote the cohomology classes corresponding to a point $x \in X$ and 
the surface $X$ respectively. By abusing notations, we also use $C$ to denote 
the cohomology class corresponding to the real-surface $C$.

Assume that the surface $X$ admits a non-trivial holomorphic differential two-form 
$\theta \in H^0(X, \Omega_X^2) = H^0(X, \mathcal O_X(K_X))$. 
By the results of Beauville in \cite{Bea1, Bea2},
$\theta$ induces a holomorphic two-form $\theta^{[n]}$ of the Hilbert scheme $\Xn$ 
which can also be regarded as a map $\theta^{[n]}: T_\Xn \to \Omega_\Xn$. 
For simplicity, put
$$
\Mbar = \Mbar_{g, r}(\Xn, \beta).
$$
Define the degeneracy locus $\Mbar(\theta)$ to be the subset of $\Mbar$ consisting of 
all the stable maps $u: \Gamma \to \Xn$ such that the composite
\beq   \label{thetaNull}
u^*(\theta^{[n]}) \circ du: \quad T_{\Gamma_{\rm reg}} \to 
u^*T_{\Xn}|_{\Gamma_{\rm reg}} \to u^*\Omega_{\Xn}|_{\Gamma_{\rm reg}}
\eeq
is trivial over the regular locus $\Gamma_{\rm reg}$ of $\Gamma$.
By the results of Kiem-Li \cite{KL1, KL2}, $\theta^{[n]}$ defines 
a regular cosection of the obstruction sheaf of $\Mbar$:
\beq    \label{LL3.1}
\eta : \mathcal Ob_{\Mbar} \longrightarrow \mathcal O_{\Mbar}
\eeq
where $\mathcal Ob_{\Mbar}$ is the obstruction sheaf and $\mathcal O_{\Mbar}$ is 
the structure sheaf of $\Mbar$. Moreover, the cosection $\eta$ is surjective away from 
the degeneracy locus $\Mbar(\theta)$, and there exists a localized virtual cycle 
$[\Mbar]^{\rm vir}_{\rm loc} \in A_*(\Mbar(\theta))$ such that
\beq   \label{KL1.1}
[\Mbar]^{\rm vir} = \iota_*[\Mbar]^{\rm vir}_{\rm loc} \in A_*(\Mbar)
\eeq
where $\iota: \Mbar(\theta) \to \Mbar$ stands for the inclusion map.

\begin{lemma}   \label{LmaNull}
Let $C_0$ be the zero divisor of $\theta$. Let $u: \Gamma \to \Xn$ be a stable map 
in $\Mbar(\theta)$, and let $\Gamma_0$ be an irreducible component of $\Gamma$ 
with non-constant restriction $u|_{\Gamma_0}$. Then there exists $\xi_1 \in X^{[n_0]}$ 
for some $n_0$ such that $\Supp(\xi_1) \cap C_0 = \emptyset$ and 
\beq    \label{LmaNull.0}
u(\Gamma_0) \subset \xi_1 + \{\xi_2| \Supp(\xi_2) \subset C_0 \}.
\eeq
\end{lemma}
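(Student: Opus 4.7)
The strategy is to pull back the universal family to $\Gamma_0$ and decompose it into a ``fixed'' part supported away from $C_0$ and a ``moving'' part supported on $C_0$. The cosection condition \eqref{thetaNull}, combined with the non-degeneracy of Beauville's holomorphic two-form on the Hilbert scheme of any open subset disjoint from $C_0$, will force the fixed part to be constant along $\Gamma_0$.

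Concretely, I first pull back the universal subscheme $\mathcal Z_n \subset \Xn\times X$ along $u|_{\Gamma_0}$ to obtain a flat family $Z\subset \Gamma_0\times X$ of relative length $n$, and write $\pi_X:Z\to X$ for the second projection. Since $Z\to\Gamma_0$ is flat of relative dimension zero over an integral one-dimensional base, every associated point of $Z$ dominates $\Gamma_0$; hence each irreducible component $Z_j$ is one-dimensional, surjects onto $\Gamma_0$, and projects onto a closed subset $A_j := \pi_X(Z_j)\subset X$ that is either a single point or an irreducible curve.

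I next claim every curve component $A_j$ lies in $C_0$. Pick a generic point $(p_0,x_0)\in Z_j$ with $p_0\in\Gamma_0\cap\Gamma_{\rm reg}$, and suppose for contradiction that $x_0\notin C_0$. In a small analytic neighborhood $V\subset X$ of $x_0$ disjoint from $C_0$, the connected component of $u(p)$ supported in $V$ has constant length $n_0$ for $p$ near $p_0$, giving a local map $\eta:(\Gamma_0,p_0)\to V^{[n_0]}$, which is non-constant because $\pi_X(Z_j)$ is one-dimensional. Since $\theta|_V$ is nowhere vanishing, Beauville's construction \cite{Bea1,Bea2} makes $\theta^{[n_0]}$ a holomorphic symplectic form on $V^{[n_0]}$; the disjoint-support decomposition $u(p)=\eta(p)+\eta'(p)$ yields a local product structure $\Xn \cong V^{[n_0]}\times X^{[n-n_0]}$ together with a direct sum decomposition $\theta^{[n]}|_{u(p)} = \theta^{[n_0]}|_{\eta(p)}\oplus\theta^{[n-n_0]}|_{\eta'(p)}$ (verified at reduced $u(p)$ by Beauville's explicit formula at reduced points, and elsewhere by continuity). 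Thus \eqref{thetaNull} restricts to $\eta$ as the identity $\theta^{[n_0]}(d\eta(v),-)=0$, forcing $d\eta\equiv 0$ by symplectic non-degeneracy --- a contradiction. Hence $x_0\in C_0$, so $A_j\subset C_0$.

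To conclude, set $A:=\bigcup_j A_j$ and let $A_1$ be the union of isolated points of $A$ not lying on $C_0$, with $A_2 := A\setminus A_1 \subset C_0$; these are disjoint closed subsets of $X$. Choose disjoint Zariski opens $U_i\supset A_i$ with $U_1\cap C_0=\emptyset$, so $Z$ splits as $Z = Z^{(1)}\sqcup Z^{(2)}$ with $Z^{(i)}\subset\Gamma_0\times U_i$, each flat over $\Gamma_0$ of some relative length $n_i$ (with $n_1+n_2=n$). The resulting map $u_1:\Gamma_0\to U_1^{[n_1]}$ satisfies the cosection condition in the first factor of the direct sum decomposition of $\theta^{[n]}$, and by symplectic non-degeneracy of $\theta^{[n_1]}$ on $U_1^{[n_1]}$ must have $du_1\equiv 0$ on $\Gamma_0\cap\Gamma_{\rm reg}$, hence be constant; write $u_1\equiv \xi_1\in X^{[n_1]}$, so $\Supp(\xi_1)=A_1\subset X\setminus C_0$ and $\Supp(u_2(p))\subset A_2\subset C_0$, yielding \eqref{LmaNull.0}. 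The main technical obstacle is the direct sum decomposition of $\theta^{[n]}$ at subschemes with disjoint supports; this will be obtained from the compatibility of Beauville's construction with the natural local product structure of $\Xn$.
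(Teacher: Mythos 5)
Your proposal is correct and follows essentially the same route as the paper: decompose $u(p)$ into the piece supported off $C_0$ and the piece supported on $C_0$, then use that Beauville's form restricts to a (non-degenerate) symplectic form on the Hilbert scheme of the complement of $C_0$, compatibly with the local product decomposition, to force the off-$C_0$ piece to be constant. The paper does this in one stroke on a generic open subset of $\Gamma_0$, whereas you organize it through the pulled-back universal family and apply the non-degeneracy argument twice; the extra care you take with the direct-sum decomposition of $\theta^{[n]}$ is a point the paper uses implicitly.
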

\begin{proof}
For notational convenience, we assume that $\Gamma = \Gamma_0$ is irreducible. 
Then there exist a nonempty open subset $O \subset \Gamma$ and an integer 
$n_0 \ge 0$ such that $O$ is smooth and for every element $p \in O$, 
the image $u(p)$ is of the form
\beq    \label{LmaNull.1}
u(p) = \xi_1(p) + \xi_2(p)
\eeq
where $\xi_1(p) \in X^{[n_0]}$ with $\Supp(\xi_1(p)) \cap C_0 = \emptyset$ and
$\Supp(\xi_2(p)) \subset C_0$. This induces a decomposition $u|_O = (u_1, u_2)$
where the morphisms $u_1: O \to X^{[n_0]}$ and $u_2: O \to X^{[n-n_0]}$ are 
defined by sending $p \in O$ to $\xi_1(p)$ and $\xi_2(p)$ respectively.
Since \eqref{thetaNull} is trivial over the regular locus $\Gamma_{\rm reg}$ of $\Gamma$,
the composite
\beq   \label{LmaNull.2}
u_1^*(\theta^{[n_0]}) \circ du_1: \quad T_O \to  u_1^*T_{X^{[n_0]}}|_O 
\to u_1^*\Omega_{X^{[n_0]}}|_O
\eeq
is trivial. Note that the holomorphic two-form $\theta^{[n_0]}$ on $X^{[n_0]}$ is 
non-degenerate at $\xi_1(p), p \in O$ since $\Supp(\xi_1(p)) \cap C_0 = \emptyset$.
Thus, $du_1 = 0$ and $u_1$ is a constant morphism. 
Setting $\xi_1 = \xi_1(p) = u_1(p), p \in O$ proves the lemma.
\end{proof}

In the rest of the paper, we will assume that $X$ is simply connected. Then,
\begin{eqnarray}   \label{PicardH1=0}
{\rm Pic}(\Xn) \cong {\rm Pic}(X) \oplus \Z \cdot (B_n/2)
\end{eqnarray}
by \cite{Fog2}. Under this isomorphism, the divisor $D_C \in {\rm Pic}(\Xn)$ 
corresponds to $C \in {\rm Pic}(X)$. Let $\{\alpha_1, \ldots, \alpha_s\}$ 
be a linear basis of $H^2(X, \C)$.
Then, 
\beq   \label{BasisH^2}
\{D_{\alpha_1}, \ldots, D_{\alpha_s}, B_n\}
\eeq
is a linear basis of $H^2(\Xn, \C)$. Represent $\alpha_1, \ldots, \alpha_s$ by real-surfaces 
$C_1, \ldots, C_s \subset X$ respectively. Then a linear basis of $H_2(\Xn, \C)$ is given by
\beq   \label{BasisH_2}
\{\beta_{C_1}, \ldots, \beta_{C_s}, \beta_n\}.
\eeq

\begin{lemma}   \label{MThetaNonEmpty}
Let the surface $X$ be simply connected. Assume that the zero divisor $C_0$ of $\theta$ 
is irreducible. If the subset $\Mbar(\theta)$ of $\Mbar = \Mbar_{g, r}(\Xn, \beta)$ is nonempty,
then $\beta = d_0 \beta_{C_0} - d \beta_n$ for some integer $d$ and some rational number 
$d_0 \ge 0$. Moreover, if $C_0$ is also reduced, then $d_0$ is an non-negative integer.
\end{lemma}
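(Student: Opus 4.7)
The plan is to apply Lemma~\ref{LmaNull} component by component to constrain the image of $u$, and then to read off the coefficients $d_0$ and $d$ in $\beta = d_0\beta_{C_0} - d\beta_n$ by pairing $\beta = u_*[\Gamma]$ against the divisor classes $D_{C'}$ and $B_n$ on $\Xn$. Concretely, choose a stable map $u \colon \Gamma \to \Xn$ in $\Mbar(\theta)$ with $u_*[\Gamma] = \beta$, and apply Lemma~\ref{LmaNull} to each irreducible component $\Gamma_\alpha$ on which $u$ is non-constant to produce $\xi_1^{(\alpha)} \in X^{[n_0^{(\alpha)}]}$ with $\Supp(\xi_1^{(\alpha)}) \cap C_0 = \emptyset$ and $u(\Gamma_\alpha) \subset \xi_1^{(\alpha)} + \{\xi_2 : \Supp(\xi_2) \subset C_0\}$. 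Letting $T \subset X$ be the union of $C_0$, the finite sets $\Supp(\xi_1^{(\alpha)})$, and the supports of the (constant) images of the remaining components, one has $\Supp(u(p)) \subset T$ for every $p \in \Gamma$.

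Expand $\beta = \sum_{i=1}^s a_i\beta_{C_i} + b\beta_n$ in the basis \eqref{BasisH_2}. Using $\beta_C \cdot D_{C'} = [C]\cdot[C']$, $\beta_n \cdot D_{C'} = 0$, $\beta_C \cdot B_n = 0$, and $\beta_n \cdot B_n = -2$, one gets $\beta\cdot B_n = -2b$ and $\beta\cdot D_{C'} = (\sum_i a_i\alpha_i)\cdot[C']$ for every $[C'] \in H^2(X,\C)$; the integrality of $B_n/2$ from \eqref{PicardH1=0} then forces $b = -d$ with $d \in \Z$. To compute $\sum_i a_i\alpha_i$, form $\widetilde\Gamma := \Gamma \times_\Xn \mathcal Z_n$, a finite flat cover of $\Gamma$ of degree $n$, and let $\pi \colon \widetilde\Gamma \to X$ be the composition with the projection to $X$. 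The tautological description of $[D_{C'}]$ via the universal subscheme, combined with base change, yields
$$
\beta \cdot D_{C'} \;=\; \pi_*[\widetilde\Gamma] \cdot [C'],
$$
so Poincar\'e duality on $X$ identifies $\sum_i a_i\alpha_i = \mathrm{PD}_X(\Xi)$ in $H^2(X,\C)$, where $\Xi := \pi_*[\widetilde\Gamma] \in H_2(X,\Z)$.

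By construction $\pi(\widetilde\Gamma) \subset T$, and since the $0$-dimensional part of $T$ contributes nothing to a $1$-cycle class, $\Xi$ is supported on the $1$-dimensional part of $T$, namely the irreducible curve $C_0$. Writing $C_0 = k \cdot C_0^{\mathrm{red}}$ (with $k = 1$ when $C_0$ is reduced), one has $H_2(C_0^{\mathrm{red}},\Z) = \Z\cdot[C_0^{\mathrm{red}}]$, so $\Xi = m[C_0^{\mathrm{red}}]$ for some $m \in \Z$, and hence $\sum_i a_i\alpha_i = (m/k)[C_0]$; this yields $d_0 = m/k \in \Q$ in general, and $d_0 = m \in \Z$ in the reduced case. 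For non-negativity, fix an ample $H$ on $X$ so $[H]\cdot[C_0] > 0$; by base-point-freeness of a suitable multiple, one finds $H' \in |H|$ with $u(\Gamma) \not\subset D_{H'}$, and then $d_0\bigl([C_0]\cdot[H]\bigr) = \beta \cdot D_H = \beta\cdot D_{H'} = \deg_\Gamma u^*\mathcal O_\Xn(D_{H'}) \ge 0$ since $D_{H'}$ is effective, so $d_0 \ge 0$.

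The main technical step is the base-change identity $\beta\cdot D_{C'} = \pi_*[\widetilde\Gamma]\cdot[C']$ together with the support analysis showing that $\Xi$ factors through $H_2(C_0^{\mathrm{red}},\Z) = \Z\cdot[C_0^{\mathrm{red}}]$; one must verify that the universal-subscheme formula for $[D_{C'}]$ is compatible with pullback by $u$, and that the isolated-point contributions to $\pi(\widetilde\Gamma)$ genuinely drop out of the $1$-cycle class. Once these are in place, integrality of $d_0$ in the reduced case is immediate since $H_2$ of a reduced irreducible curve is cyclic.
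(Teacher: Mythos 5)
Your proof is correct and follows essentially the same strategy as the paper's: confine $\Supp(u(p))$ to $C_0$ plus finitely many points via Lemma~\ref{LmaNull}, then identify $\beta$ by pairing against $B_n$ and the divisors $D_{C'}$. Your cycle-theoretic computation of $\beta\cdot D_{C'}$ via $\pi_*[\widetilde\Gamma]$ is just a cleaner formalization of the paper's transversal-intersection count (and is the same device the paper itself uses in Lemma~\ref{CurveSim}), with the multiplicity $m$ of $C_0^{\mathrm{red}}$ in $\pi_*[\widetilde\Gamma]$ playing the role of the paper's coefficient $d_0'$.
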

\begin{proof}
Let $u: \Gamma \to \Xn$ be a stable map in $\Mbar(\theta)$. Restricting $u$ 
to the irreducible components of $\Gamma$ if necessary, we may assume that
$\Gamma$ is irreducible. By Lemma~\ref{LmaNull}, there exists $\xi_1 \in X^{[n_0]}$ 
for some $n_0$ such that $\Supp(\xi_1) \cap C_0 = \emptyset$ and 
\beq    \label{MThetaNonEmpty.1}
u(\Gamma) \subset \xi_1 + \{\xi_2| \Supp(\xi_2) \subset C_0 \}.
\eeq
We may further assume that $n_0 = 0$ and $C_0$ is reduced. Then for every $p \in \Gamma$,
$$
\Supp(u(p)) \subset C_0.
$$

Let $C$ be a real-surface in $X$. Assume that $C_0$ and $C$ intersect transversally 
at $x_{1, 1}, \ldots, x_{1, s}, x_{2, 1}, \ldots, x_{2, t} \in (C_0)_{\rm reg}$ such that 
each $x_{1, i}$ (respectively, $x_{2,i}$) contributes $1$ (respectively, $-1$) 
to the intersection number $C_0 \cdot C$. 
So $s - t = C_0 \cdot C$. Since $\Supp(u(p)) \subset C_0$ and $C_0$ 
is irreducible and reduced, there exists an integer $d_0'$ such that $d_0'$ is independent
of $C$ and that each $x_{1, i}$ (respectively, $x_{2,i}$) contributes $d_0'$ 
(respectively, $-d_0'$) to the intersection number $u(\Gamma) \cdot D_C$. Thus, 
$$
u(\Gamma) \cdot D_C = sd_0' - td_0' = (s-t)d_0' = d_0' (C_0 \cdot C).
$$
In view of the bases \eqref{BasisH_2} and \eqref{BasisH^2}, 
$u(\Gamma) = d_0' \beta_{C_0} - d' \beta_n$ for some integer $d'$.
Choosing $C$ to be a very ample curve, we see that $d_0' \ge 0$.
Finally, since $\beta = \deg(u) \cdot u(\Gamma)$, we obtain
$\beta = d_0 \beta_{C_0} - d \beta_n$ for some integers $d_0 \ge 0$ and $d$.
\end{proof}

\begin{theorem}    \label{ThmVanish}
Let $X$ be a simply connected surface admitting a holomorphic differential two-form 
with irreducible zero divisor. If $\beta \ne d_0 \beta_{K_X} - d \beta_n$ for 
some integer $d$ and rational number $d_0 \ge 0$,
then all the Gromov-Witten invariants of $\Xn$ defined via the moduli space 
$\Mbar_{g, r}(\Xn, \beta)$ vanish.
\end{theorem}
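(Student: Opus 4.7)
The plan is to deduce Theorem~\ref{ThmVanish} as a direct application of the Kiem--Li cosection localization machinery already assembled in this section, using Lemma~\ref{MThetaNonEmpty} as the key combinatorial input. The only new observation needed is that the divisor class of the zero locus of $\theta$ coincides with $K_X$, so that $\beta_{C_0} = \beta_{K_X}$ in $H_2(\Xn,\C)$.

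First I would fix a nonzero $\theta \in H^0(X,\mathcal O_X(K_X))$ whose zero divisor $C_0$ is irreducible, as granted by hypothesis. Since $\theta$ is a section of $\mathcal O_X(K_X)$, the divisor $C_0$ represents the canonical class $K_X \in \Pic(X)$, and therefore $\beta_{C_0} = \beta_{K_X}$ under the basis \eqref{BasisH_2}. The induced holomorphic two-form $\theta^{[n]}$ on $\Xn$ produces, via \cite{KL1, KL2}, the cosection \eqref{LL3.1}
\[
\eta : \mathcal Ob_{\Mbar} \longrightarrow \mathcal O_{\Mbar}
\]
of the obstruction sheaf of $\Mbar = \Mbar_{g,r}(\Xn,\beta)$, which is surjective away from the degeneracy locus $\Mbar(\theta)$ and yields the identity \eqref{KL1.1}
\[
[\Mbar]^{\rm vir} = \iota_*[\Mbar]^{\rm vir}_{\rm loc} \in A_*(\Mbar),
\]
where $\iota : \Mbar(\theta) \hookrightarrow \Mbar$ is the inclusion and $[\Mbar]^{\rm vir}_{\rm loc} \in A_*(\Mbar(\theta))$.

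Next I would argue by contrapositive. Suppose $\beta$ is not of the form $d_0\beta_{K_X}-d\beta_n$ for any integer $d$ and rational $d_0 \ge 0$. Since $\beta_{C_0}=\beta_{K_X}$, Lemma~\ref{MThetaNonEmpty} (applied to the irreducible zero divisor $C_0$) forces $\Mbar(\theta)=\emptyset$. Consequently $A_*(\Mbar(\theta))=0$, so $[\Mbar]^{\rm vir}_{\rm loc}=0$, and pushing forward by $\iota$ gives $[\Mbar]^{\rm vir}=0$. Any Gromov--Witten invariant $\langle \alpha_1,\ldots,\alpha_r\rangle_{g,\beta}^{\Xn}$ defined via $\Mbar$ is obtained by capping evaluation pullbacks against $[\Mbar]^{\rm vir}$ as in \eqref{def-GW}, and therefore vanishes.

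There is no essential obstacle to this plan: the hard analytic content has already been absorbed into Lemma~\ref{LmaNull} and Lemma~\ref{MThetaNonEmpty}, and the Kiem--Li localization supplies the vanishing once the degeneracy locus is seen to be empty. The only point worth stating carefully in the written proof is the identification $\beta_{C_0}=\beta_{K_X}$, which is why the hypothesis on $\theta$ (a section of $\mathcal O_X(K_X)$) produces the precise exceptional class appearing in the theorem.
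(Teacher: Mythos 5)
Your proposal is correct and follows essentially the same route as the paper: apply Lemma~\ref{MThetaNonEmpty} in contrapositive form to conclude $\Mbar(\theta)=\emptyset$, then use the Kiem--Li identity \eqref{KL1.1} to get $[\Mbar]^{\rm vir}=0$ and hence the vanishing of all invariants via \eqref{def-GW}. Your explicit remark that $C_0$ represents $K_X$ so $\beta_{C_0}=\beta_{K_X}$ is a point the paper leaves implicit, and it is a worthwhile clarification.
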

\begin{proof}
Let $\theta \in H^0(X, \Omega_X^2) = H^0(X, \mathcal O_X(K_X))$ be the holomorphic differential 
two-form whose zero divisor $C_0$ is irreducible.  
By Lemma~\ref{MThetaNonEmpty}, we have $\Mbar(\theta) = \emptyset$.
It follows from \eqref{KL1.1} that $[\Mbar_{g, r}(\Xn, \beta)]^{\rm vir} = 0$. 
Therefore, all the Gromov-Witten invariants defined via the moduli space 
$\Mbar_{g, r}(\Xn, \beta)$ vanish.
\end{proof}

\begin{remark} \label{RmkThmVanish}
From the proof of Lemma~\ref{MThetaNonEmpty}, we see that if $K_X = C_0 = mC_0'$
for some irreducible and reduced curve $C_0'$, then the rational number $d_0$
in Theorem~\ref{ThmVanish} is of the form $d_0'/m$ for some integer $d_0' \ge 0$.
\end{remark}

Recall that $K_\Xn = D_{K_X}$. Thus, if $\beta = d_0 \beta_{K_X} - d \beta_n$ for 
some rational number $d_0 \ge 0$ and integer $d$, then the expected dimension of 
$\Mbar_{g, r}(\Xn, \beta)$ is
\begin{eqnarray}    \label{ExpDim}
      \mathfrak d 
&=&-K_\Xn \cdot \beta + (\dim \Xn - 3)(1-g)+ r    \nonumber    \\
&=&-d_0K_X^2 + (2n - 3)(1-g)+ r.
\end{eqnarray}

Our first corollary deals with the case when $X$ is an elliptic surface.

\begin{corollary}  \label{EllipticCor}
Let $X$ be a simply connected (minimal) elliptic surface without multiple fibers 
and with positive geometric genus. 
Let $n \ge 2$ and $\beta \ne 0$. Then all the Gromov-Witten invariants without descendant 
insertions defined via the moduli space $\Mbar_{g, r}(\Xn, \beta)$ vanish, 
except possibly when $0 \le g \le 1$ and $\beta = d_0 \beta_{K_X} - d \beta_n$ for 
some integer $d$ and rational number $d_0 \ge 0$.
\end{corollary}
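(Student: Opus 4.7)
The plan is to combine Theorem~\ref{ThmVanish} with a dimension argument to handle the genus restriction.

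For Step 1 I would realize the hypotheses of Theorem~\ref{ThmVanish}. Since $X$ is a simply connected minimal elliptic surface without multiple fibers and $p_g > 0$, Kodaira's canonical bundle formula gives $K_X \cong \pi^* \mathcal O_{\Pee^1}(p_g - 1)$, where $\pi: X \to \Pee^1$ is the elliptic fibration. When $p_g \ge 2$, I take $\theta := \pi^*(s^{p_g-1}) \in H^0(X, K_X)$ with $s \in H^0(\Pee^1, \mathcal O(1))$ vanishing at a point $p_0$ over which $F_0 := \pi^{-1}(p_0)$ is smooth; the zero divisor of $\theta$ is $(p_g - 1) F_0$, whose support is the irreducible curve $F_0$. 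Theorem~\ref{ThmVanish} then forces $\beta = d_0 \beta_{K_X} - d \beta_n$ for some $d \in \Z$ and rational $d_0 \ge 0$, unless all GW invariants vanish. When $p_g = 1$, $K_X$ is trivial and any nonzero $\theta$ is nowhere vanishing; $\theta^{[n]}$ is then a holomorphic symplectic form on $\Xn$ and the cosection in \eqref{LL3.1} is surjective everywhere, so $\Mbar(\theta) = \emptyset$ and \eqref{KL1.1} yields $[\Mbar_{g, r}(\Xn, \beta)]^{\vir} = 0$ for every $\beta \ne 0$.

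For Step 2 I would handle the remaining case $p_g \ge 2$, $g \ge 2$, and $\beta = d_0 \beta_{K_X} - d \beta_n$. Since $K_X$ is a multiple of a fiber and fibers have vanishing self-intersection, $K_X^2 = 0$; combining $K_\Xn = D_{K_X}$ with the intersection rules $D_C \cdot \beta_{C'} = C \cdot C'$ and $D_C \cdot \beta_n = 0$, I compute $K_\Xn \cdot \beta = d_0 K_X^2 = 0$. From \eqref{ExpDim}, the expected dimension of $\Mbar_{g, 0}(\Xn, \beta)$ is
\[
\mathfrak d_0 = (2n - 3)(1 - g) \le -(2n - 3) < 0
\]
whenever $g \ge 2$ and $n \ge 2$, so $[\Mbar_{g, 0}(\Xn, \beta)]^{\vir}$ lies in a Chow group of negative dimension and vanishes. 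For $r > 0$, the forgetful map $f_{r, 0}$ is flat when $\beta \ne 0$ by \eqref{r-to-i}, and the standard compatibility $[\Mbar_{g, r}]^{\vir} = f_{r, 0}^* [\Mbar_{g, 0}]^{\vir}$ (equivalently, the projection formula applied to the GW integrand $\prod_i \mathrm{ev}_i^*\alpha_i$) propagates the vanishing to $[\Mbar_{g, r}(\Xn, \beta)]^{\vir} = 0$, so every GW invariant in this range vanishes. Combining the two steps completes the proof.

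I expect the argument to be essentially routine given Theorem~\ref{ThmVanish}. The only non-immediate input is the flat-pullback compatibility of virtual fundamental classes under the forgetful maps $f_{r, 0}$ for $\beta \ne 0$, which is a well-known property of virtual classes in Gromov-Witten theory and can be traced back to \cite{Beh}.
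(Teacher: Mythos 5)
Your proof is correct, and it reaches the same conclusion as the paper's by a partly different route. The paper's own argument is shorter: it records $K_X=(p_g-1)f$, invokes Theorem~\ref{ThmVanish} to reduce to $\beta=d_0\beta_{K_X}-d\beta_n$, and then, since $K_X^2=0$ makes the expected dimension $\mathfrak d=(2n-3)(1-g)+r$, kills every $g\ge 2$ invariant by the Fundamental Class Axiom (a nonvanishing invariant without descendants needs each insertion of degree at least $2$, forcing $\mathfrak d\ge r$, i.e.\ $g\le 1$). You differ in two respects. First, you actually verify the hypothesis of Theorem~\ref{ThmVanish}: for $p_g\ge 2$ you exhibit $\theta$ with zero divisor $(p_g-1)F_0$ (note that the paper's convention, visible in Lemma~\ref{MThetaNonEmpty} and Remark~\ref{RmkThmVanish}, is that ``irreducible zero divisor'' permits non-reduced divisors with irreducible support, so this is legitimate), and you treat $p_g=1$ separately --- rightly so, since there $K_X$ is trivial, the zero divisor is empty, and Theorem~\ref{ThmVanish} does not literally apply; your symplectic-form argument gives the stronger conclusion that all invariants with $\beta\ne 0$ vanish. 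Only a small phrasing point there: the correct implication is that no stable map with a non-constant component can make \eqref{thetaNull} trivial when $\theta^{[n]}$ is everywhere nondegenerate, hence $\Mbar(\theta)=\emptyset$; surjectivity of the cosection is a consequence, not the cause. Second, for the genus reduction you pull the virtual class back along $f_{r,0}$ and use that $A_{\mathfrak d_0}$ vanishes for $\mathfrak d_0<0$, whereas the paper counts cohomological degrees of the insertions; both are standard and equally rigorous here, your version leaning on Behrend's forgetful-map compatibility of virtual classes and the paper's on the Fundamental Class Axiom.
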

\begin{proof}
Since $X$ is a simply connected elliptic surface without multiple fibers,
$K_X = (p_g-1)f$ where $p_g \ge 1$ is the geometric genus of $X$ and $f$ denotes 
a smooth fiber of the elliptic fibration. By Theorem~\ref{ThmVanish}, 
it remains to consider the case when $\beta = d_0 \beta_{K_X} - d \beta_n$ for 
some integer $d$ and rational number $d_0 \ge 0$. By \eqref{ExpDim} and $K_X^2 = 0$, 
the expected dimension of the moduli space $\Mbar_{g, r}(\Xn, \beta)$ is equal to 
$\mathfrak d = (2n - 3)(1-g)+ r$. By the Fundamental Class Axiom, all the Gromov-Witten 
invariants without descendant insertions are equal to zero if $g \ge 2$.
\end{proof}

Our second corollary concentrates on the case when $X$ is of general type.

\begin{corollary}  \label{GenTypeCor}
Let $X$ be a simply connected minimal surface of general type admitting a holomorphic 
differential two-form with irreducible zero divisor. 
Let $n \ge 2$ and $\beta \ne 0$. 
Then all the Gromov-Witten invariants without descendant insertions defined via 
$\Mbar_{g, r}(\Xn, \beta)$ vanish, except possibly in the following cases
\begin{enumerate}
\item[{\rm (i)}] $g = 0$ and $\beta = d \beta_n$ for some integer $d > 0$;

\item[{\rm (ii)}] $g = 1$ and $\beta = d \beta_n$ for some integer $d > 0$;

\item[{\rm (iii)}] $g = 0$ and $\beta = d_0 \beta_{K_X} - d \beta_n$ for some integer $d$ and rational number $d_0 > 0$.
\end{enumerate}
\end{corollary}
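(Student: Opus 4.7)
I would start by invoking Theorem~\ref{ThmVanish}, which allows me to assume that $\beta = d_0\beta_{K_X}-d\beta_n$ for some integer $d$ and rational number $d_0\geq 0$. For such $\beta$, the expected dimension \eqref{ExpDim} specializes to
$$
\mathfrak{d}=-d_0 K_X^2+(2n-3)(1-g)+r,
$$
and the task reduces to showing vanishing outside of cases (i)--(iii), i.e., in each of the following: (a) $g\geq 2$ with any admissible $\beta$; (b) $g=1$ with $d_0>0$; and (c) $d_0=0$, $g\in\{0,1\}$, and $\beta$ not a positive integer multiple of $\beta_n$.

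The first main ingredient is that $H^*(\Xn,\mathbb{C})$ is concentrated in even degrees. Since $X$ is simply connected, $H^{\mathrm{odd}}(X,\mathbb{C})=0$, and by the Nakajima--Grojnowski description $\fock$ admits a basis of Heisenberg monomial classes built from even-degree cohomology of $X$. Consequently, each insertion $\alpha_i$ satisfies $|\alpha_i|\in 2\mathbb{Z}_{\geq 0}$. If some $|\alpha_i|=0$, then $\alpha_i$ is a multiple of $1\in H^0(\Xn)$; since $\beta\neq 0$, the forgetful map $f_{r,r-1}$ from \eqref{r-to-i} exists and is flat, so the Fundamental Class Axiom yields $\langle\alpha_1,\ldots,\alpha_r\rangle^{\Xn}_{g,\beta}=0$. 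Otherwise all $|\alpha_i|\geq 2$, and the degree constraint \eqref{homo-deg} forces $\mathfrak{d}\geq r$ when $r\geq 1$; for $r=0$ one instead needs $\mathfrak{d}=0$.

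The second ingredient is $K_X^2>0$ (as $X$ is of general type). The inequality $\mathfrak{d}\geq r$ amounts to $d_0 K_X^2\leq(2n-3)(1-g)$. Since the left-hand side is non-negative, while for $n\geq 2$ the right-hand side is negative when $g\geq 2$, zero when $g=1$, and positive only when $g=0$, this kills case~(a) (the $r=0$ subcase is handled by $\mathfrak{d}<0$) and, in case~(b), forces $d_0 K_X^2\leq 0$, contradicting $d_0>0$. For case~(c) I would use an effectiveness test: the class $D_H-\varepsilon B_n/2$ on $\Xn$ is ample when $H$ is ample on $X$ and $\varepsilon>0$ is small, and the intersection numbers $\beta_n\cdot D_H=0$ and $\beta_n\cdot B_n=-2$ show that $-d\beta_n$ can be effective only when $d<0$; writing $d'=-d$ then puts us into case~(i) ($g=0$) or case~(ii) ($g=1$). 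The remaining possibility $d_0>0$ with $g=0$ is precisely case~(iii).

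The argument is essentially a case analysis that leans on Theorem~\ref{ThmVanish}, the Fundamental Class Axiom, and the ampleness of $D_H-\varepsilon B_n/2$; no individual step is a serious obstacle. The only point demanding some care is the sign convention---the Corollary's cases (i)--(ii) write $\beta=d\beta_n$ with $d>0$, whereas Theorem~\ref{ThmVanish} writes $\beta=d_0\beta_{K_X}-d\beta_n$ with $d\in\mathbb{Z}$---and the ampleness/effectiveness computation is exactly what reconciles the two.
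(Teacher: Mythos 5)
Your proposal is correct and follows essentially the same route as the paper: reduce via Theorem~\ref{ThmVanish} to classes $\beta = d_0\beta_{K_X} - d\beta_n$, then use the expected-dimension formula \eqref{ExpDim} together with the Fundamental Class Axiom (and evenness of $H^*(\Xn,\C)$) to force $\mathfrak d \ge r$, which eliminates $g \ge 2$ when $d_0 = 0$ and $g \ge 1$ when $d_0 > 0$. You merely make explicit two points the paper leaves implicit, namely the mechanism behind the $\mathfrak d \ge r$ bound and the effectivity argument showing $d\beta_n$ requires $d>0$.
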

\begin{proof}
In view of Theorem~\ref{ThmVanish}, it remains to consider the case when
$\beta = d_0 \beta_{K_X} - d \beta_n$ for some integer $d$ and rational number $d_0 \ge 0$.

When $d_0 = 0$ and $\beta = d \beta_n$ with $d > 0$, we see from \eqref{ExpDim} that
the expected dimension of the moduli space $\Mbar_{g, r}(\Xn, \beta)$ is equal to 
$$
\mathfrak d = (2n - 3)(1-g)+ r.
$$
If $g \ge 2$, then all the Gromov-Witten invariants without descendant insertions defined via 
$\Mbar_{g, r}(\Xn, \beta)$ vanish by the Fundamental Class Axiom.

Next, assume that $d_0 > 0$. Since $K_X^2 \ge 1$, we see from \eqref{ExpDim} that
$$
\mathfrak d < (2n - 3)(1-g)+ r.
$$
By the Fundamental Class Axiom, all the Gromov-Witten invariants without descendant insertions 
vanish except possibly in the case when $g = 0$.
\end{proof}
\section{\bf Intersection numbers on some moduli space of genus-$1$ stable maps}
\label{sect_ProjV}

In this section, we will compute certain intersection numbers on the moduli space of 
genus-$1$ stable maps to $\Pee(V)$ where $V$ is a rank-$2$ vector bundle over
a smooth projective curve $C$. The results will be used in Subsection~\ref{subsect_n=2Case2}.

\begin{notation}  \label{Notation}
Let $V$ be a rank-$2$ bundle over a smooth projective variety $B$.
\begin{enumerate}
\item[{\rm (i)}] $f$ denotes a fiber of the ruling $\pi: \Pee(V) \to B$ or 
its cohomology class.

\item[{\rm (ii)}] $\mathcal H = (f_{1,0})_*\omega$ is the rank-$1$ Hodge bundle
over $\Mbar_{1, 0}(\Pee(V), df)$ where $\omega$ is the relative 
dualizing sheaf for $f_{1,0}: \Mbar_{1, 1}(\Pee(V), df) \to \Mbar_{1, 0}(\Pee(V), df)$. 

\item[{\rm (iii)}] $\lambda = c_1(\mathcal H)$.
\end{enumerate}
\end{notation}

Let $d \ge 1$. If $u = [\mu: D \to \Pee(V)] \in \Mbar_{1, 0}(\Pee(V), df)$,
then $\mu(D)$ is a fiber of the ruling $\pi: \Pee(V) \to B$. Therefore, 
there exists a natural morphism 
\beq   \label{phiToB}
\phi: \quad \Mbar_{1, 0}(\Pee(V), df) \to B
\eeq
whose fiber over $b \in B$ is 
$\Mbar_{1, 0}\big (\pi^{-1}(b), d[\pi^{-1}(b)] \big ) \cong \Mbar_{1, 0}(\Pee^1, d[\Pee^1])$.
So the moduli space $\Mbar_{1, 0}(\Pee(V), df)$ is smooth (as a stack) with dimension
$$
\dim \Mbar_{1,0}(\Pee^1, d[\Pee^1]) + \dim (B) = 2d + \dim (B).
$$
By \eqref{expected-dim}, the expected dimension of $\Mbar_{1, 0}(\Pee(V), df)$ is $2d$.
Since $d \ge 1$, the sheaf $R^1 (f_{1,0})_*{\rm ev}_1^*\mathcal O_{\Pee(V)}(-2)$ on 
$\Mbar_{1, 0}(\Pee(V), df)$ is locally free of rank-$2d$. In addition, 
\beq   \label{Mumford}
\lambda^2 = 0
\eeq
according to Mumford's theorem in \cite{Mum} regarding the Chern character of 
the Hodge bundles and the proof of Proposition~1 in \cite{FP}.

\begin{lemma}   \label{deformation}
Let $d \ge 1$. Let $V$ be a rank-$2$ bundle over $B_0 \times C$ where $B_0$ and $C$
are smooth projective curves. Let $V_b = V|_{\{b\} \times C}$ for $b \in B_0$. Then, 
\begin{eqnarray}     \label{deformation.0}
\int_{[\Mbar_{1, 0}(\Pee(V_b), df)]} \lambda \cdot
c_{2d} \big ( R^1 (f_{1,0})_*{\rm ev}_1^*\mathcal O_{\Pee(V_b)}(-2) \big )     
\end{eqnarray}
is independent of the points $b \in B_0$.
\end{lemma}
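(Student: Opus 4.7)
The plan is to realize the family $\{\Mbar_{1,0}(\Pee(V_b), df)\}_{b \in B_0}$ as the fibers of a single flat proper morphism $\psi \colon \mathcal M \to B_0$, to lift the integrand in \eqref{deformation.0} to a global class $\Theta \in A^{2d+1}(\mathcal M)$, and then to invoke the projection formula: the fiber integrals will equal the degree of $\psi_*\Theta \in A_1(B_0)$, which is necessarily a rational multiple of $[B_0]$ and hence independent of $b$.

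To construct $\mathcal M$, let $\pi \colon \Pee(V) \to B_0 \times C$ denote the ruling and put $\mathcal M := \Mbar_{1,0}(\Pee(V), df)$, where now $df$ stands for the class of $d$ times a ruling fiber of $\pi$. Since any stable map in class $df$ factors through a single fiber of $\pi$, the same argument as in \eqref{phiToB} produces a morphism $\Phi \colon \mathcal M \to B_0 \times C$ whose fiber over $(b,c)$ is isomorphic to $\Mbar_{1,0}(\Pee^1, d[\Pee^1])$. This exhibits $\mathcal M$ as a fiber bundle over $B_0 \times C$, so $\mathcal M$ is smooth of dimension $2d+2$ as a stack. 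Composing $\Phi$ with the projection to $B_0$ yields a flat proper morphism $\psi \colon \mathcal M \to B_0$ of relative dimension $2d+1$ whose fiber over $b \in B_0$ is exactly $\Mbar_{1,0}(\Pee(V_b), df)$.

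Next, I would verify that the integrand in \eqref{deformation.0} lifts. The Hodge bundle $\mathcal H = (f_{1,0})_*\omega$ is defined on $\mathcal M$ (with $f_{1,0} \colon \Mbar_{1,1}(\Pee(V), df) \to \mathcal M$ and $\omega$ the relative dualizing sheaf), and compatibility of $\omega$ with base change gives that $\mathcal H|_{\mathcal M_b}$ is the Hodge bundle on the fiber. Similarly, since $d \ge 1$ forces $R^1(f_{1,0})_*{\rm ev}_1^*\mathcal O_{\Pee(V)}(-2)$ to have constant fiberwise rank $2d$ on $\mathcal M$, the cohomology and base change theorem implies that this $R^1$ is locally free of rank $2d$ on $\mathcal M$ and its formation commutes with restriction to $\mathcal M_b$. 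Setting $\Theta := \lambda \cdot c_{2d}\bigl(R^1(f_{1,0})_* {\rm ev}_1^*\mathcal O_{\Pee(V)}(-2)\bigr) \in A^{2d+1}(\mathcal M)$, the restriction $\Theta|_{\mathcal M_b}$ equals the integrand of \eqref{deformation.0}.

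Finally, flatness of $\psi$ gives $\psi^*[\mathrm{pt}_b] = [\mathcal M_b]$ in $A_*(\mathcal M)$ for every $b \in B_0$, so by the projection formula
$$
\int_{[\Mbar_{1,0}(\Pee(V_b), df)]} \Theta|_{\mathcal M_b}
= \int_{\mathcal M} \Theta \cdot \psi^*[\mathrm{pt}_b]
= \int_{B_0} \psi_*\Theta \cdot [\mathrm{pt}_b],
$$
and since $\psi_*\Theta \in A_1(B_0)$ is a rational multiple of the fundamental class of the smooth irreducible curve $B_0$, the right-hand side is independent of $b$. The only nontrivial ingredient is the base-change property of the obstruction bundle, which is ensured by its constant fiberwise rank once $d \ge 1$; the remaining steps are formal.
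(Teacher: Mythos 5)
Your proposal is correct and takes essentially the same route as the paper: the paper's one-line proof rewrites \eqref{deformation.0} as $\int_{[\Mbar_{1,0}(\Pee(V),df)]}\phi^*[\{b\}\times C]\cdot\lambda\cdot c_{2d}(\cdots)$ using the global map $\phi$ of \eqref{phiToB}, which is exactly your $\Phi^*$ of the fiber class (equivalently your $\psi^*[\mathrm{pt}_b]$), and independence of $b$ follows because all point classes on $B_0$ are rationally equivalent. Your version merely spells out the base-change and projection-formula bookkeeping that the paper leaves implicit.
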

\begin{proof}
This follows from the observation that \eqref{deformation.0} is equal to
$$
\int_{[\Mbar_{1, 0}(\Pee(V), df)]} \phi^*[\{b\} \times C] \cdot \lambda \cdot
c_{2d} \big ( R^1 (f_{1,0})_*{\rm ev}_1^*\mathcal O_{\Pee(V)}(-2) \big )
$$
where the morphism $\phi: \Mbar_{1, 0}(\Pee(V), df) \to B_0 \times C$ is from \eqref{phiToB}.
\end{proof}

Formula \eqref{0K3.01} below is probably well-known, but we could not find a reference.

\begin{lemma}   \label{0K3}
Let $d$ be a positive integer. Then, we have
\begin{eqnarray} 
\int_{[\Mbar_{1, 0}(\Pee^1, d[\Pee^1])]} 
   c_{2d} \big ( R^1 (f_{1,0})_*{\rm ev}_1^*\mathcal O_{\Pee^1}(-2) \big ) 
&=&0,       \label{0K3.01}  \\
\int_{[\Mbar_{1, 0}(\Pee^1, d[\Pee^1])]} \lambda \cdot
   c_{2d-1} \big ( R^1 (f_{1,0})_*{\rm ev}_1^*\mathcal O_{\Pee^1}(-2) \big ) 
&=&-{1 \over 12d}.     \label{0K3.02}  
\end{eqnarray}
\end{lemma}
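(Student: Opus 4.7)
My plan is to use Mumford's relation $\lambda^2 = 0$ (equation \eqref{Mumford}) together with the pushforward of the twisted Euler sequence
\[
0 \to \mathcal O_{\Pee^1}(-2) \to \mathcal O_{\Pee^1}(-1)^{\oplus 2} \to \mathcal O_{\Pee^1} \to 0
\]
on $\Pee^1$. Pulling this back via the universal stable map $\mu\colon \mathcal C \to \Pee^1$ and pushing forward along $f_{1,0}$, and using that $R^0(f_{1,0})_*\mu^*\mathcal O(-k)=0$ for $k=1,2$ on the open locus where every fiber map is non-constant, together with $R^0(f_{1,0})_*\mathcal O_{\mathcal C}=\mathcal O$ and $R^1(f_{1,0})_*\mathcal O_{\mathcal C}\cong \mathcal H^\vee$, I expect to obtain the four-term exact sequence
\[
0\to \mathcal O\to \nu \to A^{\oplus 2}\to \mathcal H^\vee\to 0,
\]
where $\nu:=R^1(f_{1,0})_*{\rm ev}_1^*\mathcal O_{\Pee^1}(-2)$ has rank $2d$ and $A:=R^1(f_{1,0})_*{\rm ev}_1^*\mathcal O_{\Pee^1}(-1)$ has rank $d$.

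Whitney's formula applied to this sequence gives $c(\nu)\cdot c(\mathcal H^\vee)=c(A)^2$. Since $c(\mathcal H^\vee)=1-\lambda$ and $\lambda^2=0$, inverting gives $c(\nu)=c(A)^2(1+\lambda)$. Extracting coefficients,
\[
c_{2d}(\nu)=c_d(A)^2+2\,c_d(A)\,c_{d-1}(A)\,\lambda, \qquad \lambda\cdot c_{2d-1}(\nu)=2\lambda\,c_d(A)\,c_{d-1}(A),
\]
reducing the two integrals to evaluating $\int c_d(A)^2$ and $\int \lambda\,c_d(A)\,c_{d-1}(A)$ over $[\Mbar_{1,0}(\Pee^1,d)]$.

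To complete the computation, I would exploit the boundary stratum of $\Mbar_{1,0}(\Pee^1,d)$ where the source has a contracted elliptic bubble attached to a tree of rational curves. Up to a finite cover this stratum is $\overline M_{1,1}\times \Mbar_{0,1}(\Pee^1,d)$, with $\lambda$ pulled back from the first factor and $A$ decomposing as $\mathcal H^\vee\oplus A'$, where $A':=R^1 f_*\mu^*\mathcal O(-1)$ has rank $d-1$ on $\Mbar_{0,1}(\Pee^1,d)$. Using $\lambda^2=0$, both integrals factor as $\int_{\overline M_{1,1}}\lambda\cdot \int_{\Mbar_{0,1}(\Pee^1,d)}(\cdots)$; the first factor is the standard Hodge integral $1/24$, while the second is a rational-curve integral that I expect to evaluate by Atiyah--Bott localization on $\Pee^1$ in the style of the proof of Proposition 1 of \cite{FP}. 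Combining the main-component analysis (which contributes $0$ for \eqref{0K3.01} by a dimension count) with the boundary computation should yield the stated values $0$ and $-1/(12d)$.

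The main obstacle will be to rigorously evaluate the rational-curve integral on $\Mbar_{0,1}(\Pee^1,d)$ so that the product with $\int_{\overline M_{1,1}}\lambda = 1/24$ produces precisely $-1/(12d)$, and to verify that the four-term exact sequence survives across the boundary strata (including higher-codimension strata with more complicated dual graphs), so that no extra correction terms contaminate the final Chern-class identities.
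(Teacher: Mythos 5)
Your reduction via the Euler sequence is exactly the paper's: the four-term sequence $0 \to \mathcal O \to \nu \to A^{\oplus 2} \to \mathcal H^\vee \to 0$, Whitney's formula, and $\lambda^2=0$ give $c(\nu)=c(A^{\oplus 2})(1+\lambda)$, hence $c_{2d}(\nu)=c_{2d}(A^{\oplus 2})+\lambda\,c_{2d-1}(A^{\oplus 2})$ and $\lambda\,c_{2d-1}(\nu)=\lambda\,c_{2d-1}(A^{\oplus 2})$. That part is correct. The gap is in how you propose to evaluate the resulting integrals. First, your claim that the "main-component analysis contributes $0$ for \eqref{0K3.01} by a dimension count" is not an argument: $c_{2d}(\nu)$ is a top-degree class on the $2d$-dimensional space $\Mbar_{1,0}(\Pee^1,d[\Pee^1])$, so there is no dimension mismatch to exploit, and \eqref{0K3.01} is a genuine nontrivial vanishing. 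The paper proves it geometrically: choose a K3 surface $S$ containing a smooth rational $(-2)$-curve $C$, so that $T_S|_C=\mathcal O_C(2)\oplus\mathcal O_C(-2)$ and $\Mbar_{1,0}(S,d[C])=\Mbar_{1,0}(C,d[C])$; then $c_{2d}\big(R^1(f_{1,0})_*{\rm ev}_1^*\mathcal O_{\Pee^1}(-2)\big)$ computes $\deg[\Mbar_{1,0}(S,d[C])]^{\vir}$ via Proposition~\ref{virtual-prop}, which vanishes because all positive-degree Gromov--Witten invariants of a K3 vanish. Your proposal contains no substitute for this step.

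Second, the boundary computation you sketch does not go through as stated. The class $\lambda$ is not supported on the contracted-elliptic-bubble stratum, so the integrals $\int c_d(A)^2$ and $\int\lambda\,c_d(A)c_{d-1}(A)$ do not "factor" through $\overline M_{1,1}\times\Mbar_{0,1}(\Pee^1,d)$; to localize a $\lambda$-integral to a boundary stratum you would need the relation $12\lambda=\delta_{\rm irr}$ on $\overline M_{1,1}$, which moves you to the non-separating-node stratum, not the one you describe, and in any case you leave the resulting rational-curve integral uncomputed. The paper avoids all of this by quoting Proposition~2 of \cite{GP}, which gives $\int_{[\Mbar_{1,0}(\Pee^1,d[\Pee^1])]}c_{2d}\big(R^1(f_{1,0})_*{\rm ev}_1^*\mathcal O_{\Pee^1}(-1)^{\oplus 2}\big)=\frac{1}{12d}$; combined with \eqref{0K3.01} and the Chern-class identity above, this immediately forces $\int\lambda\,c_{2d-1}(A^{\oplus 2})=-\frac{1}{12d}$ and hence \eqref{0K3.02}. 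To repair your proof you need (a) an actual argument for \eqref{0K3.01}, such as the K3 trick, and (b) either the citation to \cite{GP} or a complete localization computation of the $\mathcal O(-1)^{\oplus 2}$ integral.
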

\begin{proof}
We begin with the proof of \eqref{0K3.01}. 
Choose a K3 surface $S$ which contains a smooth rational curve $C$. 
Then, $C^2 = -2$, $T_S|_C = \mathcal O_C(2) \oplus \mathcal O_C(-2)$,
and $dC$ is the only element in the complete linear system $|dC|$. So we have
\begin{eqnarray*} 
& &\int_{[\Mbar_{1, 0}(\Pee^1, d[\Pee^1])]} 
     c_{2d} \big ( R^1 (f_{1,0})_*{\rm ev}_1^*\mathcal O_{\Pee^1}(-2) \big )  \\
&=&\int_{[\Mbar_{1, 0}(C, d[C])]} 
     c_{2d} \big ( R^1 (f_{1,0})_*{\rm ev}_1^*(T_S|_C) \big )  \\
&=&\int_{[\Mbar_{1, 0}(S, d[C])]} 
     c_{2d} \big ( R^1 (f_{1,0})_*{\rm ev}_1^*T_S \big ).
\end{eqnarray*}
Note that $R^1 (f_{1,0})_*{\rm ev}_1^*T_S$ is a rank-$2d$ bundle on the $2d$-dimensional 
moduli space $\Mbar_{1, 0}(S, d[C])$ whose virtual dimension is $0$. 
By Proposition~\ref{virtual-prop},
\begin{eqnarray*} 
\int_{[\Mbar_{1, 0}(\Pee^1, d[\Pee^1])]} 
     c_{2d} \big ( R^1 (f_{1,0})_*{\rm ev}_1^*\mathcal O_{\Pee^1}(-2) \big )
= \deg \big ( [\Mbar_{1, 0}(S, d[C])]^\vir \big ).
\end{eqnarray*}
Since $[\Mbar_{g, r}(S, \beta)]^\vir = 0$ whenever $\beta \ne 0$, we obtain
$$
\int_{[\Mbar_{1, 0}(\Pee^1, d[\Pee^1])]} 
c_{2d} \big ( R^1 (f_{1,0})_*{\rm ev}_1^*\mathcal O_{\Pee^1}(-2) \big ) = 0. 
$$

To prove \eqref{0K3.02}, we apply $(f_{1,0})_*{\rm ev}_1^*$ to the exact sequence
$$
0 \to \mathcal O_{\Pee^1}(-2) \to \mathcal O_{\Pee^1}(-1)^{\oplus 2} 
\to \mathcal O_{\Pee^1} \to 0.
$$
Since $(f_{1,0})_*{\rm ev}_1^*\mathcal O_{\Pee^1} 
= \mathcal O_{\Mbar_{1, 0}(\Pee^1, d[\Pee^1])}$ and
$R^1 (f_{1,0})_*{\rm ev}_1^*\mathcal O_{\Pee^1} = \mathcal H^\vee$, we get 
$$
0 \to \mathcal O_{\Mbar_{1, 0}(\Pee^1, d[\Pee^1])}
\to R^1 (f_{1,0})_*{\rm ev}_1^*\mathcal O_{\Pee^1}(-2) 
\to R^1 (f_{1,0})_*{\rm ev}_1^*\mathcal O_{\Pee^1}(-1)^{\oplus 2} \to \mathcal H^\vee \to 0.
$$
Calculating the total Chern class and using \eqref{Mumford}, we see that
\begin{eqnarray}    \label{0K3.1}
   c\big ( R^1 (f_{1,0})_*{\rm ev}_1^*\mathcal O_{\Pee^1}(-2) \big )
&=&c\big ( R^1 (f_{1,0})_*{\rm ev}_1^*\mathcal O_{\Pee^1}(-1)^{\oplus 2} 
          \big )/c\big ( \mathcal H^\vee \big )   \nonumber  \\
&=&c\big ( R^1 (f_{1,0})_*{\rm ev}_1^*\mathcal O_{\Pee^1}(-1)^{\oplus 2} \big ) 
         \cdot (1 + \lambda).
\end{eqnarray} 
Thus, the top Chern class $c_{2d} \big ( R^1 (f_{1,0})_*{\rm ev}_1^*\mathcal O_{\Pee^1}(-2) \big )$ 
is equal to
$$
c_{2d} \big ( R^1 (f_{1,0})_*{\rm ev}_1^*\mathcal O_{\Pee^1}(-1)^{\oplus 2} \big ) 
+ \lambda \cdot c_{2d-1} \big ( R^1 (f_{1,0})_*{\rm ev}_1^*\mathcal O_{\Pee^1}(-1)^{\oplus 2} \big ).
$$
By the Proposition~2 in \cite{GP} and \eqref{0K3.01}, we conclude that
\begin{eqnarray}    \label{0K3.2}
\int_{[\Mbar_{1, 0}(\Pee^1, d[\Pee^1])]} \lambda \cdot
   c_{2d-1} \big ( R^1 (f_{1,0})_*{\rm ev}_1^*\mathcal O_{\Pee^1}(-1)^{\oplus 2} \big ) 
= -{1 \over 12d}.
\end{eqnarray} 
By \eqref{0K3.1} again, $\lambda \cdot c_{2d-1} \big ( 
R^1 (f_{1,0})_*{\rm ev}_1^*\mathcal O_{\Pee^1}(-2) \big ) = \lambda \cdot 
c_{2d-1} \big ( R^1 (f_{1,0})_*{\rm ev}_1^*\mathcal O_{\Pee^1}(-1)^{\oplus 2} \big )$.
Combining this with \eqref{0K3.2}, we obtain our formula \eqref{0K3.02}.
\end{proof}

\begin{lemma}   \label{Hirzebruch}
Let $d \ge 1$, and $V$ be a rank-$2$ bundle over $\Pee^1$. Then,
\begin{eqnarray}    \label{Hirzebruch.0}
\int_{[\Mbar_{1, 0}(\Pee(V), df)]} \lambda \cdot
c_{2d} \big ( R^1 (f_{1,0})_*{\rm ev}_1^*\mathcal O_{\Pee(V)}(-2) \big ) = {\deg(V) \over 12d}.
\end{eqnarray}
\end{lemma}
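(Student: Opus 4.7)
My plan is to establish the formula via three ingredients: Lemma~\ref{deformation} to reduce to a function of $e := \deg V$, a twisting identity, and a doubling identity coming from a degree-$2$ cover of $\Pee^1$. Throughout, I abbreviate $W := R^1(f_{1,0})_*\mathrm{ev}_1^*\mathcal O_{\Pee(V)}(-2)$ and $I(V) := \int_{[\Mbar_{1, 0}(\Pee(V), df)]} \lambda \cdot c_{2d}(W)$.

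By Lemma~\ref{deformation} applied to families on $\Pee^1\times \Pee^1$, $I(V)$ depends only on $e=\deg V$, so I write it as $I(e)$. The base case $I(0)=0$ follows immediately by taking $V=\mathcal O\oplus\mathcal O$: then $\Pee(V)\cong\Pee^1\times\Pee^1$ and $\Mbar_{1,0}(\Pee(V),df)\cong \Pee^1\times \Mbar_{1,0}(\Pee^1,d[\Pee^1])$, with both $\lambda$ and $c_{2d}(W)$ pulled back from the second factor; since the complex codimension $2d+1$ of the integrand exceeds $\dim\Mbar_{1,0}(\Pee^1,d[\Pee^1])=2d$, the integral vanishes.

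For the twisting identity, let $L$ be a line bundle of degree $k$ on $\Pee^1$, so that $\Pee(V\otimes L)=\Pee(V)$ as varieties and $\mathcal O_{\Pee(V\otimes L)}(-2)=\mathcal O_{\Pee(V)}(-2)\otimes \pi^*L^{-2}$ under the convention $\pi_*\mathcal O_{\Pee(V)}(1)=V$. Using $\pi\circ \mathrm{ev}_1=\phi\circ f_{1,0}$ and the projection formula, $R^1 f_*\mathrm{ev}_1^*\mathcal O_{\Pee(V\otimes L)}(-2)=W\otimes \phi^*L^{-2}$. Since $(\phi^*[\mathrm{pt}])^2=0$, the top Chern class expands as $c_{2d}(W\otimes \phi^*L^{-2})=c_{2d}(W)-2k\,\phi^*[\mathrm{pt}]\cdot c_{2d-1}(W)$. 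Integrating against $\lambda$ and reducing $\int\lambda\cdot c_{2d-1}(W)\cdot \phi^*[\mathrm{pt}]$ to a fiber integral that equals $-1/(12d)$ by~\eqref{0K3.02} yields
\[
I(e+2k)=I(e)+2k/(12d).
\]

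For the doubling identity, let $\rho:\Pee^1\to\Pee^1$ be a degree-$2$ morphism and set $\tilde V=\rho^*V$, with $\deg\tilde V=2e$. The base change $\Pee(\tilde V)\cong \Pee^1\times_{\Pee^1}\Pee(V)$ gives an identification $\Mbar_{1,0}(\Pee(\tilde V),d\tilde f)\cong \Pee^1\times_{\Pee^1}\Mbar_{1,0}(\Pee(V),df)$, with a natural morphism $\tilde\rho$ to $\Mbar_{1,0}(\Pee(V),df)$ of generic degree $2$. Since the relative line bundle $\mathcal O(-1)$ pulls back under $\Pee(\cdot)$ base change and the universal curves pull back, both the Hodge bundle and $W$ are $\tilde\rho$-pullbacks, so
\[
I(2e)=2\,I(e).
\]
Combining the three facts $I(0)=0$, $I(e+2k)=I(e)+2k/(12d)$, and $I(2e)=2I(e)$: twisting from $0$ gives $I(2m)=2m/(12d)$; then doubling at $e=1$ yields $I(1)=I(2)/2=1/(12d)$; and a final application of twisting gives $I(2m+1)=(2m+1)/(12d)$. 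Thus $I(e)=e/(12d)$ for all $e$. The most delicate point is the base-change verification in the doubling step, which comes down to the compatibility of $\mathcal O_{\Pee(V)}(-2)$ and of the universal curves with the Cartesian diagram defining $\tilde\rho$.
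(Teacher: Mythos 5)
Your argument is correct. The even-degree half coincides with the paper's: deform to a twist of the trivial bundle via Lemma~\ref{deformation}, expand $c_{2d}(W\otimes\phi^*L^{-2})$ using $(\phi^*[\mathrm{pt}])^2=0$, and evaluate the fiber integral by \eqref{0K3.02} (your sign convention for how $\mathcal O_{\Pee(V)}(-2)$ transforms under $V\mapsto V\otimes L$ agrees with the one the paper uses when it writes $\mathcal O(-2)\otimes\mathcal O(-2kf)$, so the twisting identity $I(e+2k)=I(e)+2k/(12d)$ comes out with the right sign). Where you genuinely diverge is the odd-degree case. The paper deforms $V$ to $\big(\mathcal O_{\Pee^1}(2)\oplus\mathcal O_{\Pee^1}(-1)\big)\otimes\mathcal O_{\Pee^1}(k)$, recognizes $\mathcal O_{\Pee^1}(2)\oplus\mathcal O_{\Pee^1}(-1)$ as $T_{\mathbb F_1}|_\sigma$, and uses the divisor relation $[\sigma]=[C]/2-[f_0]$ in $\mathbb F_1$ together with the morphism $\phi$ of \eqref{phiToB} to express the odd-degree integral as a combination of two even-degree ones ($\deg=6$ and $\deg=2$). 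You instead prove the multiplicativity $I(2e)=2I(e)$ by pulling back along a degree-$2$ self-cover of $\Pee^1$; this is exactly the covering trick the paper itself uses later in the proof of Proposition~\ref{PropDegV} (the map $\W\alpha$ of degree $\deg(M)$), so it is fully within the paper's toolkit, and it yields $I(1)=I(2)/2$ without any auxiliary surface geometry. Your route is shorter and arguably cleaner; the paper's $\mathbb F_1$ computation has the minor virtue of staying entirely inside the deformation-plus-restriction framework of Lemma~\ref{deformation} without invoking finite base change at this stage. Both arguments share the same implicit input, namely that rank-$2$ bundles on $\Pee^1$ of equal degree are connected by families over a curve, so deformation invariance reduces $I(V)$ to a function of $\deg V$.
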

\noindent
{\it Proof.}
First of all, assume that $\deg(V) = 2k$ for some integer $k$. 
Then $V$ can be deformed to $\mathcal O_{\Pee^1}(k) \oplus \mathcal O_{\Pee^1}(k)
= \big ( \mathcal O_{\Pee^1} \oplus \mathcal O_{\Pee^1} \big ) \otimes 
\mathcal O_{\Pee^1}(k)$. By Lemma~\ref{deformation}, 
\begin{eqnarray*}    
& &\int_{[\Mbar_{1, 0}(\Pee(V), df)]} \lambda \cdot
   c_{2d} \big ( R^1 (f_{1,0})_*{\rm ev}_1^*\mathcal O_{\Pee(V)}(-2) \big )   \\
&=&\int_{[\Mbar_{1, 0}(\Pee^1 \times \Pee^1, df)]} \lambda \cdot
   c_{2d} \Big ( R^1 (f_{1,0})_*{\rm ev}_1^*\big (\mathcal O_{\Pee^1 \times \Pee^1}(-2) 
   \otimes \mathcal O_{\Pee^1 \times \Pee^1}(-2kf) \big )\Big ).
\end{eqnarray*}
Note that $\Mbar_{1, 0}(\Pee^1 \times \Pee^1, df) \cong \Pee^1 \times 
\Mbar_{1, 0}(\Pee^1, d[\Pee^1])$. Thus, we obtain
\begin{eqnarray*}    
& &\int_{[\Mbar_{1, 0}(\Pee(V), df)]} \lambda \cdot
   c_{2d} \big ( R^1 (f_{1,0})_*{\rm ev}_1^*\mathcal O_{\Pee(V)}(-2) \big )   \\
&=&\int_{[\Pee^1 \times \Mbar_{1, 0}(\Pee^1, d[\Pee^1])]} \pi_2^*\lambda \cdot
   c_{2d} \Big ( \pi_2^*\big (R^1 (f_{1,0})_*{\rm ev}_1^*\mathcal O_{\Pee^1}(-2) \big )
   \otimes \pi_1^*\mathcal O_{\Pee^1}(-2k) \Big )
\end{eqnarray*}
where $\pi_1$ and $\pi_2$ are the projection on $\Pee^1 \times \Mbar_{1, 0}(\Pee^1, d[\Pee^1])$. 
Hence,
\begin{eqnarray}    
& &\int_{[\Mbar_{1, 0}(\Pee(V), df)]} \lambda \cdot
   c_{2d} \big ( R^1 (f_{1,0})_*{\rm ev}_1^*\mathcal O_{\Pee(V)}(-2) \big )   \nonumber  \\
&=&\int_{[\Pee^1 \times \Mbar_{1, 0}(\Pee^1, d[\Pee^1])]} \pi_2^*\lambda \cdot
   \pi_2^*c_{2d-1} \big (R^1 (f_{1,0})_*{\rm ev}_1^*\mathcal O_{\Pee^1}(-2) \big )
   \cdot \pi_1^*c_1\big (\mathcal O_{\Pee^1}(-2k) \big )    \nonumber  \\
&=&{\deg(V) \over 12d}         \label{Hirzebruch.1}
\end{eqnarray}
where we have used formula \eqref{0K3.02} in the last step.

Next, assume that $\deg(V) = 2k + 1$ for some integer $k$. Then $V$ can be deformed to 
$\big ( \mathcal O_{\Pee^1}(2) \oplus \mathcal O_{\Pee^1}(-1) \big ) \otimes 
\mathcal O_{\Pee^1}(k)$. As in the previous paragraph, we have
\begin{eqnarray*}    
& &\int_{[\Mbar_{1, 0}(\Pee(V), df)]} \lambda \cdot
   c_{2d} \big ( R^1 (f_{1,0})_*{\rm ev}_1^*\mathcal O_{\Pee(V)}(-2) \big )   \\
&=&\int_{[\Mbar_{1, 0}(S, df)]} \lambda \cdot
   c_{2d} \Big ( R^1 (f_{1,0})_*{\rm ev}_1^*\big (\mathcal O_S(-2) 
   \otimes \mathcal O_S(-2kf) \big )\Big )     \\
&=&\int_{[\Mbar_{1, 0}(S, df)]} \lambda \cdot
   c_{2d} \big ( R^1 (f_{1,0})_*{\rm ev}_1^*\mathcal O_S(-2) \big ) + {2k \over 12d}
\end{eqnarray*}
where $S = \Pee \big ( \mathcal O_{\Pee^1}(2) \oplus \mathcal O_{\Pee^1}(-1) \big )$.
Let $\mathbb F_1$ be the blown-up of $\Pee^2$ at a point $p$, and $\sigma$ be the exceptional curve.
Then, $T_{\mathbb F_1}|_\sigma \cong \mathcal O_{\Pee^1}(2) \oplus \mathcal O_{\Pee^1}(-1)$. So
\begin{eqnarray}    
& &\int_{[\Mbar_{1, 0}(\Pee(V), df)]} \lambda \cdot
   c_{2d} \big ( R^1 (f_{1,0})_*{\rm ev}_1^*\mathcal O_{\Pee(V)}(-2) \big )   \nonumber  \\
&=&\int_{[\Mbar_{1, 0}(\Pee(T_{\mathbb F_1}|_\sigma), df)]} \lambda \cdot
   c_{2d} \big ( R^1 (f_{1,0})_*{\rm ev}_1^*\mathcal O_{\Pee(T_{\mathbb F_1}|_\sigma)}(-2) \big ) 
   + {2k \over 12d}          \nonumber   \\
&=&\int_{[\Mbar_{1, 0}(\Pee(T_{\mathbb F_1}), df)]} \phi^*[\sigma] \cdot \lambda \cdot
   c_{2d} \big ( R^1 (f_{1,0})_*{\rm ev}_1^*\mathcal O_{\Pee(T_{\mathbb F_1})}(-2) \big ) 
   + {2k \over 12d}       \label{Hirzebruch.2}
\end{eqnarray}
where the morphism $\phi: \Mbar_{1, 0}\big (\Pee(T_{\mathbb F_1}), df \big ) \to \mathbb F_1$ 
is from \eqref{phiToB}. Let $f_0$ be a fiber of the ruling $\mathbb F_1 \to \Pee^1$, 
and $C$ be a smooth conic in $\Pee^2$ such that $p \not \in C$.
We use $C$ to denote its strict transform in $\mathbb F_1$. Then, $[\sigma] = [C]/2 - [f_0]$.
By \eqref{Hirzebruch.2},
\begin{eqnarray*}   
& &\int_{[\Mbar_{1, 0}(\Pee(V), df)]} \lambda \cdot
   c_{2d} \big ( R^1 (f_{1,0})_*{\rm ev}_1^*\mathcal O_{\Pee(V)}(-2) \big )  \\
&=&{1 \over 2} \cdot \int_{[\Mbar_{1, 0}(\Pee(T_{\mathbb F_1}), df)]} \phi^*[C] \cdot 
   \lambda \cdot c_{2d} \big ( R^1 (f_{1,0})_*{\rm ev}_1^*\mathcal O_{\Pee(T_{\mathbb F_1})}(-2) 
   \big )      \\
& &- \int_{[\Mbar_{1, 0}(\Pee(T_{\mathbb F_1}), df)]} \phi^*[f_0] \cdot \lambda \cdot
   c_{2d} \big ( R^1 (f_{1,0})_*{\rm ev}_1^*\mathcal O_{\Pee(T_{\mathbb F_1})}(-2) \big )
   + {2k \over 12d}        \\
&=&{1 \over 2} \cdot \int_{[\Mbar_{1, 0}(\Pee(T_{\mathbb F_1}|_C), df)]} 
   \lambda \cdot c_{2d} \big ( R^1 (f_{1,0})_*{\rm ev}_1^*\mathcal O_{\Pee(T_{\mathbb F_1}|_C)}(-2) 
   \big )      \\
& &- \int_{[\Mbar_{1, 0}(\Pee(T_{\mathbb F_1}|_{f_0}), df)]} \lambda \cdot
   c_{2d} \big ( R^1 (f_{1,0})_*{\rm ev}_1^*\mathcal O_{\Pee(T_{\mathbb F_1}|_{f_0})}(-2) \big )
   + {2k \over 12d}.     
\end{eqnarray*}
Note that $\deg \big ( T_{\mathbb F_1}|_C \big ) = 6$ and 
$\deg \big ( T_{\mathbb F_1}|_{f_0} \big ) = 2$. By \eqref{Hirzebruch.1},
\begin{equation}
\int_{[\Mbar_{1, 0}(\Pee(V), df)]} \lambda \cdot
   c_{2d} \big ( R^1 (f_{1,0})_*{\rm ev}_1^*\mathcal O_{\Pee(V)}(-2) \big )  
= {1 \over 2} \cdot {6 \over 12d} - {2 \over 12d} + {2k \over 12d}
= {\deg(V) \over 12d}.  \tag*{$\qed$}
\end{equation}

\begin{proposition}   \label{PropDegV}
Let $d$ be a positive integer. Assume that $V$ is a rank-$2$ vector bundle over 
a smooth projective curve $C$. Then, we have
\begin{eqnarray}    \label{PropDegV.0}
\int_{[\Mbar_{1, 0}(\Pee(V), df)]} \lambda \cdot
c_{2d} \big ( R^1 (f_{1,0})_*{\rm ev}_1^*\mathcal O_{\Pee(V)}(-2) \big ) = {\deg(V) \over 12d}.
\end{eqnarray}
\end{proposition}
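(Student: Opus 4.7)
The plan is to extend Lemma \ref{Hirzebruch} (the $C = \Pee^1$ case) to an arbitrary smooth projective curve $C$ via a decomposition of the obstruction bundle that isolates the dependence on $\det V$, followed by a base-change argument relating the resulting auxiliary integral back to the $\Pee^1$ case.

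I would first use the relative Euler sequence of $\pi: \Pee(V) \to C$ to obtain the identity $\mathcal O_{\Pee(V)}(-2) = K_{\Pee(V)/C} \otimes \pi^*(\det V)^{-1}$. Since stable maps of class $df$ land in the fibers of $\pi$, there is a natural morphism $\phi: M := \Mbar_{1,0}(\Pee(V), df) \to C$ satisfying $\pi \circ {\rm ev}_1 = \phi \circ f_{1,0}$, and the projection formula yields
\[ E := R^1 (f_{1,0})_* {\rm ev}_1^* \mathcal O_{\Pee(V)}(-2) = E_0 \otimes \phi^*(\det V)^{-1}, \]
where $E_0 := R^1 (f_{1,0})_* {\rm ev}_1^* K_{\Pee(V)/C}$. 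Because $\phi^* c_1(\det V)$ is pulled back from the curve $C$ and hence squares to zero, the Chern class twist formula simplifies to $c_{2d}(E) = c_{2d}(E_0) - \deg V \cdot c_{2d-1}(E_0) \cdot \phi^*[\mathrm{pt}]$, and integrating against $\lambda$ while using Lemma \ref{0K3}\,(ii) on the fiber $F = \Mbar_{1,0}(\Pee^1, d[\Pee^1])$ gives
\[ I(V) := \int_{[M]} \lambda \cdot c_{2d}(E) = J(V) + \frac{\deg V}{12d}, \quad J(V) := \int_{[M]} \lambda \cdot c_{2d}(E_0). \]
The task is thereby reduced to showing $J(V) = 0$.

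Next I would observe that $J$ depends only on the $\Pee^1$-fibration $\Pee(V) \to C$, not on $V$ itself, since both $E_0$ and the moduli $M$ with its Hodge class $\lambda$ are intrinsic to this fibration; in particular $J(V \otimes L) = J(V)$ for any line bundle $L$ on $C$. A deformation argument along the lines of Lemma \ref{deformation} applied to $E_0$ further shows that $J$ is constant on connected families of rank-$2$ bundles on $C$. Since $\Pee^1$-bundles over $C$ have two topological types indexed by $w_2 = \deg V \bmod 2$, each connected, $J$ takes at most two values. For $V_0 = \mathcal O_C \oplus \mathcal O_C$ the fibration splits as $M = F \times C$ with $\lambda$ and $E_0$ both pulled back from $F$; then $\lambda \cdot c_{2d}(E_0)$ has complex degree $2d+1$ on the $2d$-dimensional factor $F$, so $J(V_0) = 0$, handling the even parity case.

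For the odd parity case, I would choose a finite morphism $\tau: C \to \Pee^1$ of some odd degree $n$ (such a morphism exists for any smooth projective $C$, e.g., by generic projection after a suitable embedding). For a rank-$2$ bundle $W$ on $\Pee^1$, set $V = \tau^* W$; base change gives $\Pee(V) = \Pee(W) \times_{\Pee^1} C$, and the moduli spaces together with their universal curves fit into Cartesian squares, yielding a finite flat morphism $\bar\tau: M_V \to M_W$ of degree $n$ under which $E(V) = \bar\tau^* E(W)$ (by cohomology and base change for $R^1 (f_{1,0})_*$) and $\lambda_V = \bar\tau^* \lambda_W$ (by compatibility of the Hodge bundle with the forgetful maps to $\overline{\mathcal M}_{1,1}$). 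The projection formula together with Lemma \ref{Hirzebruch} then gives
\[ I(\tau^* W) = n \cdot I(W) = n \cdot \frac{\deg W}{12d} = \frac{\deg \tau^* W}{12d}, \]
so $J(\tau^* W) = 0$; choosing $\deg W$ odd makes $\deg \tau^* W = n \deg W$ odd, covering the remaining parity. Combining, $J \equiv 0$ and the proposition follows. The main technical hurdle will be verifying the base-change identity $E(V) = \bar\tau^* E(W)$, which hinges on flatness of $\bar\tau$ between smooth Deligne-Mumford stacks of the same dimension $2d + 1$ (by miracle flatness) together with standard cohomology-and-base-change for the proper map $f_{1,0}$.
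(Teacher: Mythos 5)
Your proof is correct, and while it uses the same basic ingredients as the paper (deformation invariance via Lemma~\ref{deformation}, the fiber integral \eqref{0K3.02}, the $\Pee^1$ base case Lemma~\ref{Hirzebruch}, and a base change along a covering $C \to \Pee^1$ plus the projection formula), it organizes them differently. The paper deforms $V$ to the explicit split form $(\mathcal O_C \oplus M) \otimes N$ with $M$ very ample, peels off the twist by $N$ exactly as you do (using that $c_1$ of a pullback from $C$ squares to zero, with the cross term evaluated on a fiber by \eqref{0K3.02}), and then identifies $\Pee(\mathcal O_C \oplus M)$ as the base change of $\Pee(\mathcal O_{\Pee^1} \oplus \mathcal O_{\Pee^1}(1))$ along the degree-$\deg(M)$ morphism $\alpha$ determined by $M$, concluding by the projection formula and Lemma~\ref{Hirzebruch}. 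You instead isolate the determinant dependence once and for all via $\mathcal O_{\Pee(V)}(-2) = \omega_{\Pee(V)/C} \otimes \pi^*(\det V)^{-1}$, reducing the statement to the vanishing of the intrinsic quantity $J$, which you show depends only on $\deg(V) \bmod 2$; the even case is a dimension count on the product, and the odd case is the same base-change step along an arbitrary odd-degree covering. Your framing makes the answer visibly equal to $\deg(V)/12d$ plus a parity invariant, at the cost of needing connectedness of each full parity class of rank-$2$ bundles on $C$, whereas the paper only needs to connect $V$ to one explicit split bundle --- both rest on the same ``well-known'' irreducibility of families of rank-$2$ bundles on a curve. Two points to make explicit in a final write-up: the sign in $\mathcal O_{\Pee(V)}(-2) = \omega_{\Pee(V)/C} \otimes \pi^*(\det V)^{\pm 1}$ must match the convention implicit in Lemma~\ref{Hirzebruch} (it does, since your cross term $+\deg(V)/12d$ reproduces the $2k/12d$ shift computed there), and the identity $E(\tau^*W) = \bar\tau^* E(W)$ follows from cohomology and base change, which is immediate here because $R^1(f_{1,0})_*$ is already known to be locally free of rank $2d$ and $R^2$ vanishes for relative dimension one.
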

\begin{proof}
It is well-known that there exist a rank-$2$ bundle $\mathcal V$ over $\Pee^1 \times C$
and two points $b_1, b_2 \in \Pee^1$ such that $\mathcal V|_{\{b_1\} \times C} = V$
and $\mathcal V|_{\{b_2\} \times C} = (\mathcal O_C \oplus M) \otimes N$
where $M$ and $N$ are line bundles on $C$ with $M$ being very ample. 
As in the proof of Lemma~\ref{Hirzebruch}, we conclude from Lemma~\ref{deformation} that 
\begin{eqnarray}    \label{PropDegV.1}
& &\int_{[\Mbar_{1, 0}(\Pee(V), df)]} \lambda \cdot
   c_{2d} \big ( R^1 (f_{1,0})_*{\rm ev}_1^*\mathcal O_{\Pee(V)}(-2) \big )   \nonumber   \\
&=&\int_{[\Mbar_{1, 0}(\Pee(\mathcal O_C \oplus M), df)]} \lambda \cdot
   c_{2d} \big ( R^1 (f_{1,0})_*{\rm ev}_1^*\mathcal O_{\Pee(\mathcal O_C \oplus M)}(-2) \big )
   + {2 \deg(N) \over 12d}.   \qquad
\end{eqnarray}
Since $M$ is very ample, there exists a morphism $\alpha: C \to \Pee^1$ such that
$M = \alpha^*\mathcal O_{\Pee^1}(1)$. Then, $\mathcal O_C \oplus M 
= \alpha^*\big (\mathcal O_{\Pee^1} \oplus \mathcal O_{\Pee^1}(1) \big )$.
This induces an isomorphism 
$$
\Mbar_{1, 0}(\Pee(\mathcal O_C \oplus M), df) \cong C \times_{\Pee^1} 
\Mbar_{1, 0}\big (\Pee(\mathcal O_{\Pee^1} \oplus \mathcal O_{\Pee^1}(1)), df \big ).
$$
Let $\W \alpha: \Mbar_{1, 0}(\Pee(\mathcal O_C \oplus M), df) \to
\Mbar_{1, 0}\big (\Pee (\mathcal O_{\Pee^1} \oplus \mathcal O_{\Pee^1}(1)), df \big )$
be the projection. Then 
\begin{eqnarray*}     
& &\int_{[\Mbar_{1, 0}(\Pee(\mathcal O_C \oplus M), df)]} \lambda \cdot
   c_{2d} \big ( R^1 (f_{1,0})_*{\rm ev}_1^*\mathcal O_{\Pee(\mathcal O_C \oplus M)}(-2) \big )     \\
&=&\int_{[\Mbar_{1, 0}(\Pee(\mathcal O_C \oplus M), df)]} \W \alpha^*\lambda \cdot
   c_{2d} \Big (\W \alpha^*\big ( R^1 (f_{1,0})_*{\rm ev}_1^*\mathcal O_{\Pee\big (
   \mathcal O_{\Pee^1} \oplus \mathcal O_{\Pee^1}(1) \big )}(-2) \big ) \Big )     \\
&=&\deg(\W \alpha) \cdot \int_{[\Mbar_{1, 0}(\Pee(
   \mathcal O_{\Pee^1} \oplus \mathcal O_{\Pee^1}(1)), df)]} \lambda \cdot
   c_{2d} \big (R^1 (f_{1,0})_*{\rm ev}_1^*\mathcal O_{\Pee(
   \mathcal O_{\Pee^1} \oplus \mathcal O_{\Pee^1}(1))}(-2) \big ). 
\end{eqnarray*}
By Lemma~\ref{Hirzebruch} and noticing $\deg(\W \alpha) = \deg(\alpha) = \deg(M)$, we get
\begin{eqnarray}     \label{PropDegV.2}
\int_{[\Mbar_{1, 0}(\Pee(\mathcal O_C \oplus M), df)]} \lambda \cdot
   c_{2d} \big ( R^1 (f_{1,0})_*{\rm ev}_1^*\mathcal O_{\Pee(\mathcal O_C \oplus M)}(-2) \big )
= {\deg(M) \over 12d}.
\end{eqnarray}
Now our formula \eqref{PropDegV.0} follows immediately from \eqref{PropDegV.1} 
and \eqref{PropDegV.2}.
\end{proof}
\section{\bf The homology classes of curves in Hilbert schemes}
\label{sect_homology}

This section contains some technical lemmas which will be used in 
Subsection~\ref{subsect_n=2Case3}. 
These lemmas deal with the homology classes of curves in Hilbert schemes.

\begin{lemma}   \label{CurveSim}
Let $n \ge 2$ and $X$ be a simply connected surface. Let $\Gamma$ be 
an irreducible curve in the Hilbert scheme $\Xn$. Then, 
\begin{eqnarray}     \label{CurveSim.01}
\Gamma \sim \beta_C + d \beta_n  \quad \in H_2(\Xn, \C)
\end{eqnarray} 
for some effective curve class $C$ (possibly zero) and some integer $d$.
\end{lemma}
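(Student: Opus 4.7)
The plan is to decompose $\Gamma$ in the basis $\{\beta_{C_1},\ldots,\beta_{C_s},\beta_n\}$ of $H_2(\Xn,\C)$ from \eqref{BasisH_2} and to identify the ``horizontal'' coefficients with the class of an effective algebraic $1$-cycle on $X$ coming from the universal subscheme $\mathcal Z_n$.

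First I would write
$$
\Gamma \sim \sum_{i=1}^{s} a_i\,\beta_{C_i} + d\,\beta_n \quad \in H_2(\Xn,\C)
$$
and read off the coefficients via the dual basis \eqref{BasisH^2}. Using the descriptions \eqref{BetaNHei}--\eqref{DCHei} one checks $\beta_{C_i}\cdot D_{C_j}=C_i\cdot C_j$, $\beta_{C_i}\cdot B_n=0$, $\beta_n\cdot D_{C_j}=0$, and $\beta_n\cdot B_n=-2$ (e.g.\ by moving the defining data into general position, or via Nakajima-calculus). This yields $d=-\Gamma\cdot(B_n/2)$ and $\sum_i a_i C_i\cdot C_j=\Gamma\cdot D_{C_j}$ for every $j$. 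Since $X$ is simply connected, \eqref{PicardH1=0} gives $B_n/2\in\Pic(\Xn)$, so $\Gamma\cdot(B_n/2)\in\Z$ and hence $d\in\Z$.

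Next I would bring in the universal subscheme $\mathcal Z_n\subset \Xn\times X$ from \eqref{UnivZn} with its two projections $p:\mathcal Z_n\to \Xn$ (finite flat of degree $n$) and $q:\mathcal Z_n\to X$. The Heisenberg description \eqref{DCHei} of $D_{C'}$ translates into the cycle-theoretic identity $D_{C'}=p_*q^*[C']$ in $H^2(\Xn)$ for every divisor class $C'$ on $X$ (a generic point of $D_{C'}$ has support meeting $C'$ at a single reduced point, so multiplicities match). Applying the projection formula twice,
$$
\Gamma\cdot D_{C'} \;=\; p^*[\Gamma]\cdot q^*[C'] \;=\; [\W\Gamma]\cdot q^*[C'] \;=\; q_*[\W\Gamma]\cdot [C'],
$$
where $\W\Gamma:=p^{-1}(\Gamma)$ is an algebraic $1$-cycle on $\mathcal Z_n$. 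Comparing with the previous paragraph gives $\sum_i a_i C_i = q_*[\W\Gamma]$ as classes on the surface $X$.

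Finally, $q_*[\W\Gamma]$ is either zero (when $q(\W\Gamma)$ is zero-dimensional) or an effective $1$-cycle on $X$, which on the surface $X$ corresponds to an effective divisor class. Taking $C:=q_*[\W\Gamma]$ therefore produces an effective curve class (possibly zero) satisfying $\beta_C\sim\sum_i a_i\,\beta_{C_i}$, and hence $\Gamma\sim\beta_C+d\,\beta_n$ with $d\in\Z$. The main obstacle I expect is the verification of the cycle-class identity $D_{C'}=p_*q^*[C']$ together with the intersection numbers with the basis \eqref{BasisH^2}; once these are in place the rest is a direct application of the projection formula and the Lefschetz $(1,1)$-theorem (implicit in identifying effective $1$-cycles on $X$ with effective divisor classes).
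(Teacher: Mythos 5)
Your proof is correct and follows essentially the same route as the paper: both push $\Gamma$ down to $X$ through the universal subscheme $\mathcal Z_n$ and identify the horizontal part of its class with the resulting effective $1$-cycle by pairing against the divisors $D_{C'}$. The only difference is packaging --- you invoke $D_{C'}=p_*q^*[C']$ and the projection formula where the paper carries out the same intersection count explicitly against a generic very ample curve $H$; both work, and your remark that $d=-\Gamma\cdot(B_n/2)\in\Z$ because $B_n/2$ is an integral class matches the paper's appeal to \eqref{PicardH1=0}.
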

\begin{proof}
Let $\pi_1, \pi_2$ be the two projections of $X^{[n]} \times X$, 
and recall the universal codimension-$2$ subscheme ${\mathcal Z}_n$ from \eqref{UnivZn}.
Define 
\beq    \label{CurveSim.1}
{\mathcal Z}_\Gamma = \Gamma \times_{X^{[n]}} {\mathcal Z}_n.
\eeq
Let ${\w \pi}_1 = 
\pi_1|_{{\mathcal Z}_\Gamma}: {\mathcal Z}_\Gamma \to \Gamma$ and ${\w \pi}_2 = 
\pi_2|_{{\mathcal Z}_\Gamma}: {\mathcal Z}_\Gamma \to X$. Define
$$
C_\Gamma = \w \pi_2({\mathcal Z}_\Gamma) \subset X.
$$
Let $C_1, \ldots, C_t$ (possibly $t = 0$) be the irreducible components of $C_\Gamma$ 
such that $\dim C_i = 1$ for all $1 \le i \le t$. Let $m_1, \ldots, m_t$ 
be the degrees of the restrictions of $\pi_2|_{{\mathcal Z}_\Gamma}$ to the reduced 
curves ${\big ( \w \pi_2^{-1}(C_1) \big )}_{\rm red}, \ldots, 
{\big ( \w \pi_2^{-1}(C_t) \big )}_{\rm red}$ respectively.

Fix a very ample curve $H$ in $X$. Let $d_i = C_i \cdot H$ for $1 \le i \le t$.
Choose the curve $H$ such that the following conditions are satisfied:
\begin{enumerate}
\item[$\bullet$] for each $1 \le i \le t$, $H$ intersects $C_i$ transversally at 
  $d_i$ distinct smooth points
  $$
  {\w x}_{i, 1}, \ldots, {\w x}_{i, d_i};
  $$

\item[$\bullet$] for $1 \le i \le t$ and $1 \le j \le d_i$, 
  ${\w \pi}_2^{-1}({\w x}_{i, j})$ consists of distinct smooth points 
  $$
  \big (\xi_{i, j; 1}, {\w x}_{i, j} \big ), \, \ldots, \, 
  \big (\xi_{i, j; m_i}, {\w x}_{i, j} \big ) \, \in \, 
  {\big ( {\w \pi}_2^{-1}(C_i) \big )}_{\rm red}
  $$
  at which the restriction of $\w \pi_1$ to ${\big ( {\w \pi}_2^{-1}(C_i) 
  \big )}_{\rm red}$ is also unramified.
\end{enumerate}
For $1 \le i \le t$, $1 \le j \le d_i$ and $1 \le k \le m_i$, let $\w m_{i,j,k}$ 
be the multiplicity of the unique irreducible component of ${\mathcal Z}_\Gamma$ 
containing the smooth point $\big (\xi_{i, j; k}, {\w x}_{i, j} \big )$. 
Then the contribution of $\big (\xi_{i, j; k}, {\w x}_{i, j} \big )$ to 
$\Gamma \cdot D_H$ is exactly $\w m_{i,j,k}$. Therefore,
\beq    \label{CurveSim.2}
\Gamma \cdot D_H = \sum_{i=1}^t \sum_{j=1}^{d_i} \sum_{k=1}^{m_i} {\w m_{i,j,k}}.
\eeq
Note that $\sum_{k=1}^{m_i} {\w m_{i,j,k}}$ is independent of $j$ and $H$. 
Put $\sum_{k=1}^{m_i} {\w m_{i,j,k}} = e_i$. By \eqref{CurveSim.2},
\begin{eqnarray}    \label{CurveSim.3}
\Gamma \cdot D_H = \sum_{i=1}^t \sum_{j=1}^{d_i} e_i = \sum_{i=1}^t e_i d_i 
= \sum_{i=1}^t e_i (C_i \cdot H) = \left ( \sum_{i=1}^t e_i C_i \right ) \cdot H.
\end{eqnarray}

Since $X$ is simply connected, ${\rm Pic}(\Xn) \cong {\rm Pic}(X) \oplus \Z \cdot (B_n/2)$
by \eqref{PicardH1=0}. By the duality between divisor classes and curve classes, 
$\Gamma \sim \beta_C + d \beta_n$ for some integer $d$ and some class $C \in A_1(\Xn)$.
Combining with \eqref{CurveSim.3}, we get
$$
C \cdot H = (\beta_C + d \beta_n) \cdot D_H = \Gamma \cdot D_H 
= \left ( \sum_{i=1}^t e_i C_i \right ) \cdot H
$$
So $C$ and $\sum_{i=1}^t e_i C_i$ are numerically equivalent divisors on the surface $X$.
Since $X$ is simply connected, we see that $C= \sum_{i=1}^t e_i C_i$ as divisors.
\end{proof}

Next, we study the homology classes of curves in 
$C^{(n)} \subset \Xn$ where $C$ denotes a smooth curve in $X$.
The case when $g_C = 0$ has been settled in \cite{LQZ}. So we will assume $g_C \ge 1$. 
We recall some standard facts about $C^{(n)}$ from \cite{ACGH, BT}. 
For a fixed point $p \in C$, let $\Xi$ denote the divisor 
$p + C^{(n-1)} \subset C^{(n)}$. Let 
$$
{\rm AJ}: C^{(n)} \to {\rm Jac}_n(C)
$$
be the Abel-Jacobi map sending an element $\xi \in C^{(n)}$ to the corresponding
degree-$n$ divisor class in ${\rm Jac}_n(C)$. For an element $\delta \in {\rm Jac}_n(C)$, 
the fiber ${\rm AJ}^{-1}(\delta)$ is the complete line system $|\delta|$.
Let $\mathcal Z_n(C) \subset C^{(n)} \times C$ be the universal divisor, 
and let $\w \pi_1, \w \pi_2$ be the two projections on $C^{(n)} \times C$. 
By the Lemma~2.5 on p.340 of \cite{ACGH} and the Proposition~2.1~(iv) of \cite{BT},
we have
\beq    \label{ACGH1}
c_1\big ( \w \pi_{1*} \mathcal O_{\mathcal Z_n(C)} \big )
= (1 - g_C - n)\Xi + \Theta
\eeq
where $\Theta$ is the pull-back via AJ of a Theta divisor on ${\rm Jac}_n(C)$. 

\begin{lemma}   \label{HomlgCls}
Let $n \ge 2$ and $X$ be a simply connected surface. Let $C$ be a smooth curve in $X$,
and $\Gamma \subset C^{(n)}$ be a curve. Then, 
\begin{eqnarray}     \label{HomlgCls.01}
\Gamma \sim 
(\Xi \cdot \Gamma) \beta_C + \big (-(n+ g_C - 1)(\Xi \cdot \Gamma) + 
(\Theta \cdot \Gamma)\big ) \beta_n  \quad \in H_2(\Xn, \C).
\end{eqnarray} 
In addition, for every line 
$\Gamma_0$ in a positive-dimensional fiber ${\rm AJ}^{-1}(\delta)$, we have
\begin{eqnarray}     \label{HomlgCls.02}
\Gamma_0 \sim \beta_C - (n+ g_C - 1) \beta_n.
\end{eqnarray} 
\end{lemma}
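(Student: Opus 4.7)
The plan is to first reduce $[\Gamma] \in H_2(\Xn, \C)$ to the form $a\beta_C + d\beta_n$, and then determine $a$ and $d$ by pairing with carefully chosen divisor classes on $\Xn$ whose restrictions to $C^{(n)}$ are computable in terms of $\Xi$ and $\Theta$. By linearity we may assume $\Gamma$ is irreducible. Lemma~\ref{CurveSim} yields $\Gamma \sim \beta_{C'} + d\beta_n$ for some effective class $C'$; a look at its proof shows $C'$ is supported on $\w\pi_2(\mathcal Z_\Gamma) \subset X$, which is contained in $C$ since $\Gamma \subset C^{(n)}$. Because $C$ is irreducible, $C' = aC$ for some non-negative integer $a$, so $\Gamma \sim a\beta_C + d\beta_n$ and the task becomes computing $a$ and $d$.

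To compute $a$, I would fix a very ample divisor $H$ on $X$ meeting $C$ transversally at points $p_1, \ldots, p_{C\cdot H}$. Then $D_H \cap C^{(n)} = \bigcup_{i=1}^{C\cdot H}(p_i + C^{(n-1)})$, each summand representing $\Xi \in H^2(C^{(n)},\C)$, so $j^*D_H = (C \cdot H)\Xi$ for the inclusion $j : C^{(n)} \hookrightarrow \Xn$. Pairing with $\Gamma$ gives $a(C \cdot H) = \Gamma \cdot D_H = (C\cdot H)(\Xi \cdot \Gamma)$, hence $a = \Xi \cdot \Gamma$. To find $d$, use $\beta_C \cdot B_n = 0$ (choose the representative of $\beta_C$ with the extra points off $C$ so it avoids $B_n$) and $\beta_n \cdot B_n = -2$ to get $\Gamma \cdot B_n = -2d$, reducing the problem to the key identity
$$
j^*B_n = 2(n+g_C-1)\Xi - 2\Theta \quad \text{in } H^2(C^{(n)}, \C).
$$
Granted this, $d = -(n+g_C-1)(\Xi \cdot \Gamma) + (\Theta \cdot \Gamma)$, proving \eqref{HomlgCls.01}. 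For \eqref{HomlgCls.02} I would specialize to a line $\Gamma_0$ in $|\delta| = {\rm AJ}^{-1}(\delta)$: the restriction of $\Xi$ to the projective space $|\delta|$ is the hyperplane class so $\Xi \cdot \Gamma_0 = 1$, and $\Theta \cdot \Gamma_0 = 0$ since $\Gamma_0$ is contracted by ${\rm AJ}$.

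The main obstacle is establishing the boxed identity for $j^*B_n$. I would prove it via the tautological rank-$n$ bundle $\mathcal O^{[n]} := {\pi_1}_*\mathcal O_{\mathcal Z_n}$ on $\Xn$, where $\pi_1: \Xn \times X \to \Xn$. Since $\pi_1 : \mathcal Z_n \to \Xn$ is finite, base change along $j$ (using that $\mathcal Z_n$ restricts to the universal divisor $\mathcal Z_n(C) \subset C^{(n)} \times C$) identifies $j^*\mathcal O^{[n]}$ with $E_n := \w\pi_{1*}\mathcal O_{\mathcal Z_n(C)}$. The standard formula $c_1(\mathcal O^{[n]}) = -B_n/2$ (due to Lehn) then gives $-j^*B_n/2 = c_1(E_n)$, and combining with \eqref{ACGH1} yields the claimed identity.
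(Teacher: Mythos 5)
Your proposal is correct and follows essentially the same route as the paper: both arguments determine the class of $\Gamma$ by pairing against the basis $\{D_{\alpha_i}, B_n\}$ of $H^2(\Xn,\C)$, using $D_\alpha|_{C^{(n)}} = (C\cdot\alpha)\Xi$ for the $\beta_C$-coefficient and the restriction identity $\big(\pi_{1*}\mathcal O_{\mathcal Z_n}\big)|_{C^{(n)}} = \w\pi_{1*}\mathcal O_{\mathcal Z_n(C)}$ together with $c_1\big(\pi_{1*}\mathcal O_{\mathcal Z_n}\big) = -B_n/2$ and \eqref{ACGH1} for the $\beta_n$-coefficient; part (ii) is handled identically via $\Xi\cdot\Gamma_0 = 1$ and $\Theta\cdot\Gamma_0 = 0$. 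The only cosmetic difference is your preliminary appeal to Lemma~\ref{CurveSim} to put $\Gamma$ in the form $a\beta_C + d\beta_n$, which the paper bypasses since the intersection pairing with a basis of $H^2(\Xn,\C)$ already determines the class.
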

\begin{proof}
(i) Recall the universal codimension-$2$ subscheme $\mathcal Z_n \subset \Xn \times X$, 
and let $\pi_1, \pi_2$ be the two projections on $\Xn \times X$. Then, we have
$$
\big ( \pi_{1*} \mathcal O_{\mathcal Z_n} \big )|_{C^{(n)}} 
= \w \pi_{1*} \mathcal O_{\mathcal Z_n(C)}.
$$
It is well-known that $c_1\big ( \pi_{1*} \mathcal O_{\mathcal Z_n} \big ) = -B_n/2$.
Combining with \eqref{ACGH1}, we obtain
\begin{eqnarray}    \label{HomlgCls.1}
      (-B_n/2) \cdot \Gamma 
&=&c_1\big ( \pi_{1*} \mathcal O_{\mathcal Z_n} \big )|_{C^{(n)}} \cdot \Gamma
     = c_1\big ( \w \pi_{1*} \mathcal O_{\mathcal Z_n(C)} \big ) \cdot \Gamma   
                             \nonumber   \\
&=&(1 - g_C - n)(\Xi \cdot \Gamma) + (\Theta \cdot \Gamma).
\end{eqnarray}

Next, let $\alpha \in H^2(X, \C)$. Then, $D_\alpha|_{C^{(n)}} = (C \cdot \alpha) \Xi$.
Thus, we get
$$
D_\alpha \cdot \Gamma =  (C \cdot \alpha) (\Xi \cdot \Gamma).
$$
By \eqref{BasisH^2} and \eqref{HomlgCls.1}, $\Gamma \sim 
(\Xi \cdot \Gamma) \beta_C + \big ( (1 - g_C - n)(\Xi \cdot \Gamma) + 
(\Theta \cdot \Gamma)\big ) \beta_n$.

(ii) Note that $\Theta \cdot \Gamma_0 = 0$. Also, it is known that 
$\Xi|_{{\rm AJ}^{-1}(\delta)} = \mathcal O_{{\rm AJ}^{-1}(\delta)}(1)$. 
So $\Xi \cdot \Gamma_0 = 1$. By (i), we see immediately that 
$\Gamma_0 \sim \beta_C - (n+ g_C - 1) \beta_n$.
\end{proof}

\section{\bf Gromov-Witten invariants of the Hilbert scheme $\Xtwo$}
\label{sect_n=2}

This section studies the Gromov-Witten invariants of the Hilbert scheme $\Xtwo$.
Using the results from previous sections, 
we will prove Theorems~\ref{Intro-theorem_ii} and \ref{Intro-theorem_iii}.

To begin with, we obtain the following from Corollary~\ref{GenTypeCor}.

\begin{proposition}  \label{n=2Prop}
Let $X$ be a simply connected minimal surface of general type admitting a holomorphic 
differential two-form with irreducible zero divisor. Let $\beta \ne 0$. 
Then all the Gromov-Witten invariants without descendant insertions defined via 
$\Mbar_{g, r}(\Xtwo, \beta)$ vanish, except possibly in the following cases
\begin{enumerate}
\item[{\rm (i)}] $g = 0$ and $\beta = d \beta_2$ for some integer $d > 0$;

\item[{\rm (ii)}] $g = 1$ and $\beta = d \beta_2$ for some integer $d > 0$;

\item[{\rm (iii)}] $K_X^2 = 1, g = 0$ and $\beta = \beta_{K_X} - d \beta_2$ for some integer $d$.
\end{enumerate}
\end{proposition}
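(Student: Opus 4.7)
The plan is to specialize Corollary \ref{GenTypeCor} to $n = 2$: items (i) and (ii) of the proposition match items (i) and (ii) of the corollary verbatim, while item (iii) of the corollary allows any $\beta = d_0 \beta_{K_X} - d \beta_2$ with $d \in \Z$ and a positive rational number $d_0$. The remaining work is to refine this last case by showing that every possibly nonvanishing Gromov--Witten invariant forces $K_X^2 = 1$ and $d_0 = 1$.

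First I would combine the expected dimension formula \eqref{ExpDim} specialized to $n = 2$, $g = 0$, namely $\mathfrak d = -d_0 K_X^2 + 1 + r$, with the dimension constraint \eqref{homo-deg}. Since $X$ is simply connected, $\Xtwo$ has purely even rational cohomology (via the Nakajima--Grojnowski description of $\fock$), so every homogeneous class of positive degree satisfies $|\alpha_i| \ge 2$. For any insertion of degree zero I would invoke the Fundamental Class Axiom to strip it away, reducing the number of marked points until one of the following obtains: $r = 0$; or $r = 1$ with the sole insertion of degree zero; or every surviving insertion has $|\alpha_i| \ge 2$. In the first two cases \eqref{homo-deg} forces $\mathfrak d = 0$; in the third it gives $\mathfrak d \ge r$. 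Both conclusions yield
\[
d_0 K_X^2 \le 1.
\]

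Finally I would use Remark \ref{RmkThmVanish} to write $K_X = m C_0'$ with $C_0'$ irreducible and reduced, so that $d_0 = d_0'/m$ for some positive integer $d_0'$. Since $X$ is minimal of general type, $K_X^2 > 0$ forces $(C_0')^2 \ge 1$, hence $K_X^2 \ge m^2$ and
\[
d_0 K_X^2 \,=\, \frac{d_0'}{m} \cdot K_X^2 \,\ge\, d_0' \cdot m \,\ge\, m.
\]
Combined with $d_0 K_X^2 \le 1$, this forces $m = 1$ (so $K_X$ is reduced), $d_0' = 1$, and $K_X^2 = 1$, yielding $\beta = \beta_{K_X} - d \beta_2$ as desired. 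The main obstacle I anticipate is the Fundamental Class Axiom reduction at small $r$: stripping degree-zero insertions may leave an invariant with $r = 1$ whose sole insertion is the fundamental class, in which case one must apply the dimension equation directly rather than iterating the axiom; this is the reason for treating $r \le 1$ as separate cases above.
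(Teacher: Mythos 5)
Your overall route is the same as the paper's: read cases (i) and (ii) directly from Corollary~\ref{GenTypeCor}, then refine case (iii) by combining the dimension formula \eqref{ExpDim} with the Fundamental Class Axiom to force $d_0K_X^2 \le 1$, and finally use Remark~\ref{RmkThmVanish} to write $d_0 = d_0'/m$ and conclude $d_0' = m = (C_0')^2 = 1$. Your closing arithmetic ($d_0'm \le d_0K_X^2 \le 1$ forces $d_0' = m = 1$, hence $K_X^2 = 1$) is the same computation the paper carries out via $1 = d_0K_X^2 = d_0'm(C_0')^2$.

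One intermediate step is wrong as written. In your sub-case ``$r = 1$ with the sole insertion of degree zero,'' the constraint \eqref{homo-deg} gives $\mathfrak d = 0$, i.e.\ $-d_0K_X^2 + 1 + 1 = 0$, so $d_0K_X^2 = 2$ --- not $\le 1$ as you assert when you say ``both conclusions yield $d_0K_X^2 \le 1$.'' The sub-case causes no harm, but for a different reason than the one you give: for primary (non-descendant) invariants with $\beta \ne 0$, the Fundamental Class Axiom does not ``strip off'' a degree-zero insertion leaving a smaller invariant; it kills the invariant outright, so any invariant with an $H^0$ insertion already vanishes and never needs to be fed into the dimension count. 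The clean dichotomy is: a potentially non-vanishing invariant has either $r = 0$ (whence $\mathfrak d = 0$ and $d_0K_X^2 = 1$) or all insertions of degree $\ge 2$ (whence $\mathfrak d \ge r$ and $d_0K_X^2 \le 1$); since $d_0K_X^2 = K_{\Xtwo}\cdot\beta$ is a positive integer, both branches give $d_0K_X^2 = 1$, which is exactly how the paper argues. With that repair your proof is correct and coincides with the paper's.
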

\begin{proof}
Cases (i) and (ii) follow from Corollary~\ref{GenTypeCor}~(i) and (ii) respectively.
In the case of Corollary~\ref{GenTypeCor}~(iii), we have $g = 0$ and 
$\beta = d_0 \beta_{K_X} - d \beta_2$ for some rational number $d_0 > 0$ and some integer $d$.
We see from \eqref{ExpDim} that the expected dimension of the moduli space 
$\Mbar_{0, r}(\Xtwo, \beta)$ is equal to 
$$
\mathfrak d = -d_0K_X^2 + 1+ r.
$$
Since $d_0K_X^2$ must be a positive integer, we conclude from the Fundamental Class Axiom 
that all the Gromov-Witten invariants without descendant insertions defined via 
$\Mbar_{0, r}(\Xtwo, \beta)$ vanish except possibly when $d_0 K_X^2 = 1$. Now write 
$K_X = mC_0'$ where $C_0'$ is an irreducible and reduced curve, and $m \ge 1$ is an integer.
By Remark~\ref{RmkThmVanish}, $d_0 = d_0'/m$ for some integer $d_0' \ge 1$. 
Therefore, we obtain
$$
1 = d_0 K_X^2 = d_0' m (C_0')^2.
$$
It follows that $d_0' = m = (C_0')^2 = 1$. Hence $K_X^2 = 1$ and $d_0 = 1$.
\end{proof}

Case~(i) in Proposition~\ref{n=2Prop} can be handled via
the Divisor Axiom of Gromov-Witten theory and the results in \cite{LQ1}. 
By the Divisor Axiom, Case~(ii) in Proposition~\ref{n=2Prop} can be reduced to the invariant
\beq   \label{n=2Case2}
\langle 1 \rangle_{1, d\beta_2}^{\Xtwo}.
\eeq
Similarly, Case~(iii) in Proposition~\ref{n=2Prop} can be reduced to the invariant
\beq   \label{n=2Case3}
\langle 1 \rangle_{0, \,\, \beta_{K_X} - d\beta_2}^{\Xtwo}.
\eeq 
The invariants \eqref{n=2Case2} and \eqref{n=2Case3} will be studied in the next two subsections.
\subsection{\bf Calculation of \eqref{n=2Case2}}
\label{subsect_n=2Case2} 
\par
$\,$

Let $d$ denote a positive integer. In this subsection, we will compute \eqref{n=2Case2} for 
an arbitrary smooth projective surface $X$. For simplicity, we put 
\beq    \label{Mgrd}
\Mbar_{g, r, d} = \Mbar_{g, r}(\Xtwo, d\beta_2).
\eeq

\begin{lemma}   \label{obsbundle1}

\begin{enumerate}
\item[{\rm (i)}] The obstruction sheaf $\mathcal Ob = R^1(f_{1,0})_*({\rm ev}_1^*T_\Xtwo)$ 
over the moduli space ${\Mbar}_{1, 0, d}$ is locally free of rank $2d+2$;

\item[{\rm (ii)}] $[{\Mbar}_{1, 0, d}]^\vir = c_{2d+2}(\mathcal Ob)
 \cap [{\Mbar}_{1, 0, d}]$.
\end{enumerate}
\end{lemma}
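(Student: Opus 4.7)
The plan is to apply Behrend's Proposition~\ref{virtual-prop} with $Y = \Xtwo$, $g = 1$, $r = 0$, and $\beta = d\beta_2$. The argument falls into three pieces: identifying the moduli space, performing the dimension count, and verifying the local-freeness hypothesis.

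First I observe that $\beta_2$ is the class of a fiber of the natural $\Pee^1$-bundle $\pi: B_2 \to X$, where $B_2 \cong \Pee(V)$ for some rank-$2$ bundle $V$ on $X$. For any stable map $\mu: \Gamma \to \Xtwo$ in $\Mbar_{1,0,d}$, composition with the Hilbert--Chow morphism $\Xtwo \to X^{(2)}$ has trivial class, so the image of $\mu$ must lie in a single fiber of $\pi$. This produces a natural isomorphism
\begin{equation*}
\Mbar_{1,0,d} \cong \Mbar_{1,0}(\Pee(V), df).
\end{equation*}
By the discussion preceding Lemma~\ref{deformation} applied with base $B = X$, the right-hand side is smooth as a stack of dimension $2d + \dim X = 2d+2$. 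From \eqref{expected-dim}, using $K_\Xtwo = D_{K_X}$, the identity $D_C \cdot \beta_2 = 0$ for any divisor class $C$ on $X$, and $1-g = 0$, the expected dimension is $\mathfrak{d} = 0$. Hence the excess dimension equals $e = 2d+2$.

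Both parts of the lemma then follow from Proposition~\ref{virtual-prop} once I check that $\mathcal{Ob} = R^1(f_{1,0})_*({\rm ev}_1^*T_\Xtwo)$ is locally free of rank $2d+2$. By cohomology and base change on the smooth stack $\Mbar_{1,0,d}$, this reduces to showing that $h^1(\Gamma, \mu^*T_\Xtwo)$ is constant in the family. Since every such $\mu$ factors through $B_2$, I will pull back the normal bundle sequence
\begin{equation*}
0 \to T_{B_2} \to T_\Xtwo|_{B_2} \to N_{B_2/\Xtwo} \to 0
\end{equation*}
and use $N_{B_2/\Xtwo}|_{\Pee^1_x} \cong \mathcal{O}_{\Pee^1}(-2)$ (which follows from $B_2 \cdot \beta_2 = -2$ via adjunction on $\Pee^1_x \subset \Xtwo$) together with the relative tangent sequence for $\pi$, which expresses $T_{B_2}|_{\Pee^1_x}$ as an extension of $\mathcal{O}_{\Pee^1}^{\oplus 2}$ by $\mathcal{O}_{\Pee^1}(2)$. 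The Euler characteristic of $\mu^*T_\Xtwo$ on $\Gamma$ is then read off from these pieces via Riemann--Roch, giving the predicted rank $2d+2$ on the locus of smooth irreducible elliptic domains.

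The main obstacle will be confirming constancy of $h^1$ across the boundary of $\Mbar_{1,0,d}$, where $\Gamma$ may be reducible with contracted elliptic tails or rational bubbles and semicontinuity jumps are a priori possible. I expect such jumps to be ruled out by combining the global smoothness of $\Mbar_{1,0,d}$ (from the identification with $\Mbar_{1,0}(\Pee(V), df)$) with a constant-generic-rank argument: any jump in $h^1$ on a boundary stratum would via Proposition~\ref{virtual-prop} produce a component of $\Mbar_{1,0,d}$ of dimension strictly exceeding $2d+2$, contradicting the established smoothness. Once constancy is in place, Proposition~\ref{virtual-prop} delivers both the local freeness in (i) and the formula for $[\Mbar_{1,0,d}]^{\rm vir}$ in (ii).
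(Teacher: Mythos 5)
Your overall strategy --- identify $\Mbar_{1,0,d}$ with $\Mbar_{1,0}(\Pee(V),df)$ for $B_2\cong\Pee(V)$, compute the excess dimension $e=2d+2$, and feed both statements into Proposition~\ref{virtual-prop} --- is exactly the paper's, and your derivation of the splitting type $\mathcal O(2)\oplus\mathcal O(-2)\oplus\mathcal O^{\oplus 2}$ of $T_{\Xtwo}$ along a fiber $M_2(x)$ from the two exact sequences (rather than quoting \cite{LQZ}, as the paper does) is fine.

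The gap is in your last paragraph. Proposition~\ref{virtual-prop} is a one-way implication: local freeness of $R^1(f_{1,0})_*{\rm ev}_1^*T_Y$ of rank $e$ is the \emph{hypothesis}, smoothness of dimension $\mathfrak d+e$ is the \emph{conclusion}, and it cannot be run backwards. A jump of $h^1(D,\mu^*T_{\Xtwo})$ on a boundary stratum does not ``produce a component of dimension exceeding $2d+2$'' and is in no tension with smoothness of the moduli space: in the absolute deformation theory the obstruction space is a quotient of $H^1(D,\mu^*T_{\Xtwo})$ by the image of ${\rm Ext}^1(\Omega_D,\mathcal O_D)$, so extra classes in $H^1$ at a nodal domain can be absorbed by node-smoothing deformations, which never enter $R^1(f_{1,0})_*{\rm ev}_1^*T_Y$. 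So the constancy of $h^1$ across the boundary --- which you correctly identify as the crux --- is left unproved by your argument. The paper does not use any such indirect mechanism; it verifies the hypothesis pointwise, computing $h^1\big(D,\mu^*(T_{\Xtwo}|_{M_2(x)})\big)$ summand by summand for an arbitrary genus-$1$ nodal domain $D$: the $\mathcal O(-2)$ summand contributes $2d$ (since $h^0=0$ and $\chi=-2d$ for every such $D$) and each trivial summand contributes $h^1(\mathcal O_D)=1$. Be aware that the genuinely delicate summand is $\mu^*\mathcal O_{M_2(x)}(2)$: a contracted arithmetic-genus-$1$ subcurve of $D$ contributes a nonzero class to $H^1(D,\mu^*\mathcal O(2))$, and the stratum of stable maps with a contracted elliptic tail has dimension $2d+2$, the same as the main component --- so it cannot be dismissed on dimension grounds, which is precisely why your proposed mechanism cannot close the argument. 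Any complete proof must confront that stratum directly, as the paper's pointwise claim (here and again in Lemma~\ref{obsbundle2}, where the vanishing of $R^1(f_{1,0})_*\widetilde{\rm ev}_1^*T_{B_2/X}$ is asserted) is intended to do.
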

\begin{proof}
(i) Recall the evaluation map ${\rm ev}_1: \Mbar_{1, 1, d} \to \Xtwo$ and 
the forgetful map $f_{1, 0}: \Mbar_{1, 1, d} \to {\Mbar}_{1, 0, d}$ from Section~\ref{Stable}. 
Let $u = [\mu: D \to \Xtwo] \in {\Mbar}_{1, 0, d}$. Then
$$
H^1 \big (f^{-1}_{1, 0}(u), ({\rm ev}_1^*T_{\Xtwo})|_{f^{-1}_{1, 0}(u)} \big ) 
\cong H^1(D, \mu^*T_{\Xtwo})
= H^1 \big (D, \mu^*(T_{\Xtwo}|_{\mu(D)}) \big ).
$$
Since $d \ge 1$, $\mu(D) = M_2(x) \cong \Pee^1$ for some point $x \in X$. 
By the results in \cite{LQZ}, 
$$
T_{\Xtwo}|_{\mu(D)} = \mathcal O_{\mu(D)}(2) \oplus 
\mathcal O_{\mu(D)}(-2) \oplus \mathcal O_{\mu(D)} \oplus \mathcal O_{\mu(D)}.
$$
Since the curve $D$ is of genus-$1$, $h^1 \big (D, \mu^*(T_{\Xtwo}|_{\mu(D)}) \big ) = 2d+2$.
It follows that the sheaf $R^1(f_{1,0})_*({\rm ev}_1^*T_\Xtwo)$ is locally free of rank $2d+2$.

(ii) First of all, note that there exists a natural morphism 
$$\phi: {\Mbar}_{1, 0, d} \to X$$ sending an element 
$u = [\mu: D \to \Xtwo] \in {\Mbar}_{1, 0, d}$ to $x \in X$ if $\mu(D) = M_2(x)$. 
The fiber $\phi^{-1}(x)$ over $x \in X$ is $\Mbar_{1,0}(M_2(x), d[M_2(x)] \cong 
\Mbar_{1,0}(\Pee^1, d[\Pee^1])$. So the moduli space ${\Mbar}_{1, 0, d}$ is smooth 
(as a stack) with dimension
$$
\dim {\Mbar}_{1, 0, d} = \dim \Mbar_{1,0}(\Pee^1, d[\Pee^1]) + 2 = 2d + 2.
$$
By \eqref{ExpDim}, the expected dimension of ${\Mbar}_{1, 0, d}$ is $0$. Thus, 
the excess dimension of ${\Mbar}_{1, 0, d}$ is $2d+2$. By (i) and Proposition~\ref{virtual-prop}, 
$[{\Mbar}_{1, 0, d}]^\vir = c_{2d+2}(\mathcal Ob) \cap [{\Mbar}_{1, 0, d}]$.
\end{proof}

Via the inclusion map $B_2 \hookrightarrow \Xtwo$, the evaluation map 
${\rm ev}_1: \Mbar_{1, 1, d} \to \Xtwo$ factors through
a morphism $\widetilde {\rm ev}_1: \Mbar_{1, 1, d} \to B_2$. Also, $B_2 \cong \Pee(T_X^\vee)$.
Let $\rho: B_2 \to X$ be the canonical projection. 
Then, there exists a commutative diagram of morphisms:
\begin{eqnarray}\label{com-diagram2}
\begin{matrix}
\Mbar_{1, 1, d} & \overset{\widetilde {\rm ev}_1}\to  &B_2\\
\quad \downarrow^{f_{1,0}}&&\downarrow^{\rho}\\
\Mbar_{1, 0, d} &\overset{\phi} \to&X.       
\end{matrix}  
\end{eqnarray}

\begin{lemma}   \label{obsbundle2}
\begin{enumerate}
\item[{\rm (i)}] Let $\mathcal H$ be the Hodge bundle over $\Mbar_{1, 0, d}$. Then, 
$$
R^1 (f_{1,0})_*\widetilde {\rm ev}_1^*T_{B_2} \cong \mathcal H^\vee \otimes \phi^*T_X;
$$

\item[{\rm (ii)}] There exists an exact sequence of locally free sheaves:
\begin{eqnarray}   \label{obsbundle2.0}
0 \to R^1 (f_{1,0})_*\widetilde {\rm ev}_1^*T_{B_2} \to \mathcal Ob
\to R^1 (f_{1,0})_*\widetilde {\rm ev}_1^*\mathcal O_{B_2}(-2) \to 0.
\end{eqnarray}
\end{enumerate}
\end{lemma}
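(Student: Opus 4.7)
The plan is to obtain both statements by pulling back natural short exact sequences of sheaves on $B_2$ via $\widetilde{\rm ev}_1$ and then taking $R^\bullet(f_{1,0})_*$, exploiting that every geometric fiber of $f_{1,0}$ is a degree-$d$ stable map from a genus-$1$ nodal curve to a $\Pee^1$-fiber of $\rho: B_2 \to X$. All the cohomological vanishings that I need will reduce to elementary statements about line bundles of positive or negative degree on genus-$1$ curves.

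For part (i), I would start from the relative tangent sequence of the projective bundle $\rho: B_2 \cong \Pee(T_X^\vee) \to X$,
$$
0 \to T_{B_2/X} \to T_{B_2} \to \rho^* T_X \to 0,
$$
and pull it back by $\widetilde{\rm ev}_1$. Since every $u = [\mu: D \to \Xtwo] \in \Mbar_{1,0,d}$ has image contained in a single fiber $M_2(\phi(u))$ of $\rho$, the composition $\rho \circ \widetilde{\rm ev}_1$ factors through $\phi \circ f_{1,0}$, so $\widetilde{\rm ev}_1^* \rho^* T_X \cong f_{1,0}^* \phi^* T_X$. On each fiber $D$ of $f_{1,0}$ the pullback of $T_{B_2/X}$ is $\mu^* \mathcal O_{\Pee^1}(2)$, a line bundle of degree $2d \ge 2$ on a genus-$1$ curve, so $H^1$ vanishes and $R^1(f_{1,0})_* \widetilde{\rm ev}_1^* T_{B_2/X} = 0$. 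Applying $R^1(f_{1,0})_*$ to the pulled-back sequence and invoking the projection formula together with the identification $R^1(f_{1,0})_* \mathcal O_{\Mbar_{1,1,d}} \cong \mathcal H^\vee$ (relative Serre duality for the genus-$1$ family $f_{1,0}$) collapses the long exact sequence into the desired isomorphism $R^1(f_{1,0})_*\widetilde{\rm ev}_1^* T_{B_2} \cong \mathcal H^\vee \otimes \phi^* T_X$.

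For part (ii), I would use the normal bundle sequence for $B_2 \subset \Xtwo$,
$$
0 \to T_{B_2} \to T_\Xtwo|_{B_2} \to N_{B_2/\Xtwo} \to 0,
$$
combined with the identification $N_{B_2/\Xtwo} \cong \mathcal O_{B_2}(-2)$. Fiberwise over $x \in X$, this identification is immediate from the splitting $T_\Xtwo|_{M_2(x)} = \mathcal O(2) \oplus \mathcal O(-2) \oplus \mathcal O^{\oplus 2}$ recalled in the proof of Lemma~\ref{obsbundle1}, since $T_{B_2}|_{M_2(x)} = \mathcal O(2) \oplus \mathcal O^{\oplus 2}$ accounts for the other three summands. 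Pulling the sequence back by $\widetilde{\rm ev}_1$ and applying $(f_{1,0})_*$, the associated connecting homomorphism has source $(f_{1,0})_* \widetilde{\rm ev}_1^* \mathcal O_{B_2}(-2) = 0$, the vanishing holding because on each fiber $D$ the pullback $\mu^*\mathcal O_{\Pee^1}(-2)$ has negative degree $-2d$. This forces the injectivity of $R^1(f_{1,0})_* \widetilde{\rm ev}_1^* T_{B_2} \to \mathcal Ob$ and yields the short exact sequence \eqref{obsbundle2.0}; local freeness of the quotient follows since $h^1(D, \mu^*\mathcal O_{\Pee^1}(-2)) = 2d$ is constant.

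The main subtlety is the global identification $N_{B_2/\Xtwo} \cong \mathcal O_{B_2}(-2)$, which a priori could differ from the fiberwise picture by a twist $\rho^* L$ for some line bundle $L$ on $X$. This identification is standard and can be obtained from the presentation of $\Xtwo$ as the $S_2$-quotient of the blow-up $\mathrm{Bl}_{\Delta_X}(X \times X)$: the exceptional divisor upstairs has normal bundle $\mathcal O(-1)$ on $\Pee(T_X)$, while passing to the $S_2$-quotient squares the normal coordinate and hence squares the normal bundle, producing $\mathcal O_{B_2}(-2)$ with no extra twist pulled back from $X$.
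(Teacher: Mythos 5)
Your proposal is essentially identical to the paper's proof: part (i) via the relative tangent sequence of $\rho: B_2 \to X$, the factorization $\rho \circ \widetilde{\rm ev}_1 = \phi \circ f_{1,0}$, the vanishing of $R^1(f_{1,0})_*\widetilde{\rm ev}_1^*T_{B_2/X}$, and the identification $R^1(f_{1,0})_*\mathcal O \cong \mathcal H^\vee$; part (ii) via the normal bundle sequence for $B_2 \subset \Xtwo$ with $N_{B_2/\Xtwo} = \mathcal O_{B_2}(-2)$ and the vanishing of $(f_{1,0})_*\widetilde{\rm ev}_1^*\mathcal O_{B_2}(-2)$. The only addition is your closing justification that the normal bundle carries no extra twist $\rho^*L$, a point the paper asserts without comment; this is a reasonable supplement but does not change the argument.
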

\begin{proof}
(i) Let $T_{B_2/X}$ be the relative tangent sheaf for the projection $\rho: B_2 \to X$. 
Applying the functors $\widetilde {\rm ev}_1^*$ and $(f_{1,0})_*$ to the exact sequence
$$
0 \to T_{B_2/X} \to T_{B_2} \to \rho^*T_X \to 0
$$
of locally free sheaves, we obtain an exact sequence
\begin{eqnarray}   \label{obsbundle2.1}
R^1 (f_{1,0})_*\widetilde {\rm ev}_1^*T_{B_2/X} \to R^1 (f_{1,0})_*\widetilde {\rm ev}_1^*T_{B_2} 
\to R^1 (f_{1,0})_*\widetilde {\rm ev}_1^*(\rho^*T_X) \to 0.
\end{eqnarray}
where we have used $R^2(f_{1,0})_*\widetilde {\rm ev}_1^*T_{B_2/X} = 0$
since $f_{1,0}$ is of relative dimension $1$.

We claim that $R^1 (f_{1,0})_*\widetilde {\rm ev}_1^*T_{B_2/X} = 0$.
Indeed, let $u = [\mu: D \to \Xtwo] \in {\Mbar}_{1, 0, d}$, and assume that $\mu(D) = M_2(x)$.
Since $T_{B_2/X}|_{M_2(x)} = T_{M_2(x)} = \mathcal O_{M_2(x)}(2)$, 
$$
H^1 \big (f^{-1}_{1, 0}(u), \widetilde {\rm ev}_1^*T_{B_2/X}|_{f^{-1}_{1, 0}(u)} \big ) 
\cong H^1(D, \mu^*\mathcal O_{M_2(x)}(2)) = 0.
$$

By \eqref{obsbundle2.1}, $R^1 (f_{1,0})_*\widetilde {\rm ev}_1^*T_{B_2} 
\cong R^1 (f_{1,0})_*\widetilde {\rm ev}_1^*(\rho^*T_X)$.
Since $\rho \circ \widetilde {\rm ev}_1 = \phi \circ f_{1,0}$, we get
\begin{eqnarray*}
       R^1 (f_{1,0})_*\widetilde {\rm ev}_1^*T_{B_2}
&\cong&R^1 (f_{1,0})_*\big ( f_{1,0}^*(\phi^*T_X) \big )   \\
&\cong&R^1 (f_{1,0})_*\mathcal O_{\Mbar_{1, 1, d}} \otimes \phi^*T_X  \\
&\cong&\mathcal H^\vee \otimes \phi^*T_X.
\end{eqnarray*}

(ii) Since ${\rm ev}_1$ factors through $\widetilde {\rm ev}_1$, 
we see from Lemma~\ref{obsbundle1}~(i) that
$$
\mathcal Ob = R^1(f_{1,0})_*({\rm ev}_1^*T_\Xtwo)
= R^1(f_{1,0})_*\big (\widetilde {\rm ev}_1^*(T_\Xtwo|_{B_2}) \big ).
$$
Since $B_2$ is a smooth divisor in $\Xtwo$ and $\mathcal O_{B_2}(B_2) = \mathcal O_{B_2}(-2)$, we have
$$
0 \to T_{B_2} \to T_\Xtwo|_{B_2} \to \mathcal O_{B_2}(-2) \to 0.
$$
Applying the functors $\widetilde {\rm ev}_1^*$ and $(f_{1,0})_*$, we obtain an exact sequence 
\begin{eqnarray}   \label{obsbundle2.2}
(f_{1,0})_*\widetilde {\rm ev}_1^*\mathcal O_{B_2}(-2) \to 
R^1 (f_{1,0})_*\widetilde {\rm ev}_1^*T_{B_2} \to \mathcal Ob
\to R^1 (f_{1,0})_*\widetilde {\rm ev}_1^*\mathcal O_{B_2}(-2) \to 0.
\end{eqnarray}

We claim that $(f_{1,0})_*\widetilde {\rm ev}_1^*\mathcal O_{B_2}(-2) = 0$.
Indeed, let $u = [\mu: D \to \Xtwo] \in {\Mbar}_{1, 0, d}$, and assume that $\mu(D) = M_2(x)$.
Then, since $\mathcal O_{B_2}(-2)|_{M_2(x)} = \mathcal O_{M_2(x)}(-2)$, 
$$
H^0 \big (f^{-1}_{1, 0}(u), \widetilde {\rm ev}_1^*\mathcal O_{B_2}(-2)|_{f^{-1}_{1, 0}(u)} \big ) 
\cong H^0(D, \mu^*\mathcal O_{M_2(x)}(-2)) = 0.
$$
So $(f_{1,0})_*\widetilde {\rm ev}_1^*\mathcal O_{B_2}(B_2) = 0$, and 
\eqref{obsbundle2.2} is simplified to the exact sequence \eqref{obsbundle2.0}.
\end{proof}

\begin{theorem}   \label{theorem_ii}
Let $d \ge 1$. Let $X$ be a smooth projective surface. Then, 
$$
\langle 1 \rangle_{1, d\beta_2}^{\Xtwo} = {K_X^2 \over 12d}.
$$
\end{theorem}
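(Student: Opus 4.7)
By Lemma~\ref{obsbundle1}, $\langle 1\rangle_{1,d\beta_2}^{\Xtwo}$ equals $\int_{[\Mbar_{1,0,d}]} c_{2d+2}(\mathcal Ob)$. Setting $N := R^1(f_{1,0})_*\widetilde{\rm ev}_1^*\mathcal O_{B_2}(-2)$, which is locally free of rank $2d$ (so $c_{2d+1}(N) = c_{2d+2}(N) = 0$), the exact sequence in Lemma~\ref{obsbundle2}(ii) combined with multiplicativity of the total Chern class collapses to
$$c_{2d+2}(\mathcal Ob) = c_2(\mathcal H^\vee \otimes \phi^*T_X)\cdot c_{2d}(N).$$
The rank-$2$ tensor identity $c_2(E\otimes L) = c_2(E) + c_1(E)c_1(L) + c_1(L)^2$ together with Mumford's relation $\lambda^2 = 0$ from \eqref{Mumford} yields $c_2(\mathcal H^\vee \otimes \phi^*T_X) = \phi^*c_2(T_X) + \lambda\cdot \phi^*K_X$.

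This splits the integral into two pieces. The first, $\int_{[\Mbar_{1,0,d}]}\phi^*c_2(T_X)\cdot c_{2d}(N)$, vanishes: $\phi^*c_2(T_X)$ is represented by the preimage of a $0$-cycle on $X$, and on each fiber $\phi^{-1}(x)\cong \Mbar_{1,0}(\Pee^1, d[\Pee^1])$ the bundle $N$ restricts to $R^1(f_{1,0})_*{\rm ev}_1^*\mathcal O_{\Pee^1}(-2)$, so each fiberwise contribution is the integral computed in \eqref{0K3.01}, which is $0$.

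For the remaining term $\int_{[\Mbar_{1,0,d}]} \lambda\cdot \phi^*K_X\cdot c_{2d}(N)$, I would identify the pushforward $\phi_*(\lambda\cdot c_{2d}(N)) \in A^1(X)_{\mathbb Q}$ by pairing against smooth very ample divisors. For any smooth curve $H\subset X$, flat base change along $H\hookrightarrow X$ combined with the identity $\rho^{-1}(H) = \Pee(T_X^\vee|_H)$ yields an isomorphism $\phi^{-1}(H) \cong \Mbar_{1,0}(\Pee(T_X^\vee|_H), df)$ under which $\lambda$ restricts to the Hodge class and $N|_{\phi^{-1}(H)} = R^1(f_{1,0})_*{\rm ev}_1^*\mathcal O_{\Pee(T_X^\vee|_H)}(-2)$. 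Applying Proposition~\ref{PropDegV} with $V = T_X^\vee|_H$ then gives
$$\int_{\phi^{-1}(H)} \lambda\cdot c_{2d}(N) = \frac{\deg(T_X^\vee|_H)}{12d} = \frac{K_X\cdot H}{12d}.$$
Because smooth very ample curves detect numerical equivalence of divisors on $X$, this forces $\phi_*(\lambda\cdot c_{2d}(N)) = K_X/(12d)$ in $A^1(X)_{\mathbb Q}$; the projection formula then produces the desired $K_X^2/(12d)$.

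The main obstacle I anticipate is the base-change identification $\phi^{-1}(H)\cong \Mbar_{1,0}(\Pee(T_X^\vee|_H), df)$ together with the functorial compatibility of $\lambda$ and $N$ with restriction. Conceptually this is transparent—every stable map in $\phi^{-1}(H)$ factors through $\rho^{-1}(H)\subset B_2$, and both the Hodge bundle and $N$ are built from the universal family by functorial operations—but carefully executing the stack-theoretic base change along the smooth closed immersion $H\hookrightarrow X$ and checking that the induced universal objects coincide with the intrinsic ones on the target moduli is the one step beyond routine Chern-class manipulations that requires explicit verification.
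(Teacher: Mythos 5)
Your proposal is correct and follows essentially the same route as the paper: reduce to $c_{2d+2}(\mathcal Ob)$ via Lemma~\ref{obsbundle1}, split it using Lemma~\ref{obsbundle2} and $\lambda^2=0$, kill the $\phi^*c_2(T_X)$ term by \eqref{0K3.01}, and evaluate the $\phi^*K_X\cdot\lambda$ term by restricting to $\Pee(T_X^\vee|_C)$ over smooth curves $C$ and invoking Proposition~\ref{PropDegV}. The only (cosmetic) difference is that you determine the numerical class of $\phi_*(\lambda\cdot c_{2d}(N))$ by pairing against all very ample curves, whereas the paper simply writes $K_X=[C_1]-[C_2]$ for two smooth irreducible curves and substitutes directly; both hinge on the same base-change identification $\phi^{-1}(C)\cong\Mbar_{1,0}(\Pee(T_X^\vee|_C),df)$.
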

\begin{proof}
By Lemma~\ref{obsbundle1}~(ii), $\langle 1 \rangle_{1, d\beta_2}^{\Xtwo} = 
\deg [{\Mbar}_{1, 0, d}]^\vir = \deg \big (c_{2d+2}(\mathcal Ob)  \cap [{\Mbar}_{1, 0, d}] \big )$. 
The Hodge bundle $\mathcal H$ and $R^1 (f_{1,0})_*\widetilde {\rm ev}_1^*\mathcal O_{B_2}(-2)$
on the moduli space $\Mbar_{1, 0, d}$ are of ranks $1$ and $2d$ respectively.
Therefore, by Lemma~\ref{obsbundle2} and \eqref{Mumford}, 
\begin{eqnarray*}            
   \langle 1 \rangle_{1, d\beta_2}^{\Xtwo} 
&=&c_2 \big ( \mathcal H^\vee \otimes \phi^*T_X \big ) \, \cdot \, c_{2d} \big ( 
       R^1 (f_{1,0})_*\widetilde {\rm ev}_1^*\mathcal O_{B_2}(-2) \big )   \nonumber   \\
&=&\big ( \lambda^2 + \phi^*K_X \cdot \lambda + \phi^*c_2(T_X) \big ) \, \cdot \, 
       c_{2d} \big ( R^1 (f_{1,0})_*\widetilde {\rm ev}_1^*\mathcal O_{B_2}(-2) \big )  \\
&=&\big ( \phi^*K_X \cdot \lambda + \phi^*c_2(T_X) \big ) \, \cdot \, 
       c_{2d} \big ( R^1 (f_{1,0})_*\widetilde {\rm ev}_1^*\mathcal O_{B_2}(-2) \big ).
\end{eqnarray*}
Let $\chi(X)$ be the Euler characteristic of $X$. By \eqref{0K3.01}, we obtain
\begin{eqnarray*} 
& &\phi^*c_2(T_X) \cdot c_{2d} \big ( 
   R^1 (f_{1,0})_*\widetilde {\rm ev}_1^*\mathcal O_{B_2}(-2) \big )   \\
&=&\chi(X) \cdot \int_{[\Mbar_{1, 0}(\Pee^1, d[\Pee^1])]} 
   c_{2d} \big ( R^1 (f_{1,0})_*{\rm ev}_1^*\mathcal O_{\Pee^1}(-2) \big )   \\
&=&0.
\end{eqnarray*} 
Hence, $\langle 1 \rangle_{1, d\beta_2}^{\Xtwo} = \phi^*K_X \cdot \lambda \cdot  
c_{2d} \big ( R^1 (f_{1,0})_*\widetilde {\rm ev}_1^*\mathcal O_{B_2}(-2) \big )$.
Choose two smooth irreducible curves $C_1$ and $C_2$ satisfying $[C_1] - [C_2] = K_X$. 
Since $B_2 \cong \Pee(T_X^\vee)$,
\begin{eqnarray*}            
   \langle 1 \rangle_{1, d\beta_2}^{\Xtwo}    
&=&\phi^*([C_1] - [C_2]) \cdot \lambda \cdot  
   c_{2d} \big ( R^1 (f_{1,0})_*\widetilde {\rm ev}_1^*\mathcal O_{B_2}(-2) \big )   \\
&=&\int_{[\Mbar_{1, 0}(\Pee(T_X^\vee|_{C_1}), df)]} \lambda \cdot
   c_{2d} \big ( R^1 (f_{1,0})_*{\rm ev}_1^*\mathcal O_{\Pee(T_X^\vee|_{C_1})}(-2) \big ) \\
& &- \int_{[\Mbar_{1, 0}(\Pee(T_X^\vee|_{C_2}), df)]} \lambda \cdot
   c_{2d} \big ( R^1 (f_{1,0})_*{\rm ev}_1^*\mathcal O_{\Pee(T_X^\vee|_{C_2})}(-2) \big ) \\
&=&{\deg(T_X^\vee|_{C_1}) \over 12d} - {\deg(T_X^\vee|_{C_2}) \over 12d}   \\
&=&{K_X^2 \over 12d}
\end{eqnarray*}
where we have used Proposition~\ref{PropDegV} in the third step.
\end{proof}

\subsection{\bf Calculation of \eqref{n=2Case3}}
\label{subsect_n=2Case3}
\par
$\,$

Let $X$ be a minimal surface of general type with 
$K_X^2 = 1$ and $p_g \ge 1$. By Noether's inequality, $p_g \le 2$. 
Thus, $p_g = 1$ or $2$. If $p_g = 2$, then we see from the proof 
of Proposition~(8.1) in \cite{BPV} that
$|K_X|$ is a pencil without fixed part and with one base point, 
and the general canonical curve is a genus-$2$ smooth irreducible curve.
If $p_g = 1$, then $|K_X|$ consists of a single element which is 
a connected curve of arithmetic genus-$2$.
In this subsection, we will study \eqref{n=2Case3} by assuming
that $X$ is a simply connected minimal surface of general type with 
$K_X^2 = 1$ and $1 \le p_g \le 2$, and that every member in $|K_X|$ is 
a smooth irreducible curve (of genus-$2$). 

Our first lemma asserts that the lower bound of $d$ for the class 
$\beta_{K_X} + d \beta_2$ to be effective is equal to $-3$,
and classifies all the curves homologous to $\beta_{K_X} - 3 \beta_2$.

\begin{lemma}  \label{LowerBd}
Let $X$ be a simply connected minimal surface of general type with 
$K_X^2 = 1$ and $1 \le p_g \le 2$ such that every member in $|K_X|$ is 
a smooth curve. Let $\Gamma$ be a curve in $\Xtwo$ such that 
$\Gamma \sim \beta_{K_X} + d \beta_2$ for some integer $d$. Then, 
\begin{enumerate}
\item[{\rm (i)}] $d \ge -3$. 

\item[{\rm (ii)}]  $d = -3$ if and only if 
$\Gamma = g_2^1(C) \subset C^{(2)}$, where $C \in |K_X|$ and
$g_2^1(C)$ is the unique linear system of 
dimension $1$ and degree $2$ on the genus-$2$ curve $C$.
\end{enumerate}
\end{lemma}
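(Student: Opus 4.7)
The plan is to decompose $\Gamma = \sum_j a_j \Gamma_j$ into irreducible components and apply Lemma~\ref{CurveSim} to each $\Gamma_j$, obtaining $\Gamma_j \sim \beta_{C_j'} + d_j' \beta_2$ with $C_j'$ an effective (possibly zero) class on $X$. Since $X$ is simply connected, linear and numerical equivalence coincide on $X$, so comparing with $\Gamma \sim \beta_{K_X} + d\beta_2$ shows that the effective divisor $D := \sum_j a_j C_j'$ on $X$ is a member of $|K_X|$.

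The main obstacle is to show that $D$ must equal a single smooth irreducible canonical curve. Since every member of $|K_X|$ is smooth by hypothesis, $D$ is reduced with pairwise disjoint smooth irreducible components. Suppose for contradiction $D = D_1 + D_2$ with $D_1, D_2$ nonzero and disjoint. By disjointness, $K_X \cdot D_i = D_i^2$, and nefness of $K_X$ (recall $X$ is minimal of general type) gives $D_i^2 \ge 0$; combined with $D_1^2 + D_2^2 = K_X^2 = 1$, one may assume $D_1^2 = 0$. Since $K_X$ is big and $K_X \cdot D_1 = 0$, the Hodge index theorem together with adjunction forces every smooth irreducible component of $D_1$ to be a $(-2)$-curve; as $D_1$ is the disjoint union of such curves, $D_1^2 \le -2$, contradicting $D_1^2 = 0$. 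Hence $D = C$ for a single smooth irreducible $C \in |K_X|$, which has genus $2$ by adjunction. Consequently there is a unique $j_1$ with $C_{j_1}' \ne 0$, and then $a_{j_1} = 1$, $C_{j_1}' = C$, and $\Gamma_{j_1} \subset C^{(2)}$; every remaining $\Gamma_j$ satisfies $C_j' = 0$ and therefore $\Gamma_j = M_2(x_j) \sim \beta_2$ for some $x_j \in X$.

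Next, apply Lemma~\ref{HomlgCls} to the irreducible curve $\Gamma_{j_1} \subset C^{(2)}$ with $n=2$ and $g_C = 2$: matching coefficients in $\Gamma_{j_1} \sim \beta_C + d_{j_1}' \beta_2$ yields $\Xi \cdot \Gamma_{j_1} = 1$ and $\Theta \cdot \Gamma_{j_1} = d_{j_1}' + 3$. Since $\Theta$ is the pull-back via the Abel-Jacobi map ${\rm AJ}\colon C^{(2)} \to {\rm Jac}_2(C)$ of the principal polarization, $\Theta$ is nef, so $\Theta \cdot \Gamma_{j_1} \ge 0$ and hence $d_{j_1}' \ge -3$. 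Since every $\Gamma_j = M_2(x_j)$ with $j \ne j_1$ contributes $+a_j$ to the $\beta_2$-coefficient of $\Gamma$, we obtain $d = d_{j_1}' + \sum_{j \ne j_1} a_j \ge -3$, establishing (i).

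For (ii), the equality $d = -3$ forces $d_{j_1}' = -3$ and the absence of any $M_2(x_j)$-component, so $\Gamma = \Gamma_{j_1}$ lies in a single fiber of ${\rm AJ}$ (using $\Theta \cdot \Gamma = 0$). Over a smooth genus-$2$ curve the only positive-dimensional fiber of ${\rm AJ}$ is $|K_C| = g_2^1(C) \cong \Pee^1$, and $\Xi \cdot \Gamma = 1$ with $\Xi|_{g_2^1(C)} = \mathcal O(1)$ forces $\Gamma$ to coincide with this entire $\Pee^1$. The converse implication is immediate from Lemma~\ref{HomlgCls}(ii) applied with $n = 2$ and $g_C = 2$.
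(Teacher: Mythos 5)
Your overall strategy matches the paper's: decompose into irreducible components, use Lemma~\ref{CurveSim} to see that the associated divisor on $X$ is a canonical curve $C$, and finish inside $C^{(2)}$ with Lemma~\ref{HomlgCls} and the nefness of $\Theta$. Your Hodge-index argument that a smooth member of $|K_X|$ with $K_X^2=1$ is automatically irreducible is a nice touch (the paper simply assumes irreducibility as a standing hypothesis for the subsection). But there is a genuine gap at the step ``$C_{j_1}'=C$, and $\Gamma_{j_1}\subset C^{(2)}$.'' Lemma~\ref{CurveSim} only records the one-dimensional components of the image $C_{\Gamma}=\w\pi_2(\mathcal Z_{\Gamma})\subset X$; it does not say that $\Supp(\xi)\subset C$ for every $\xi\in\Gamma_{j_1}$. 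A concrete counterexample to your intermediate claim is the class $\beta_C$ itself: take $\Gamma_{j_1}=\{x+x_2 : x\in C\}$ with $x_2\notin C$ a fixed point. This irreducible curve satisfies $\Gamma_{j_1}\sim\beta_{K_X}+0\cdot\beta_2$, so your decomposition assigns it $C_{j_1}'=C$, yet it is not contained in $C^{(2)}$, and applying Lemma~\ref{HomlgCls} to it is meaningless. Since you have not bounded $d_{j_1}'$ in this configuration, neither (i) nor the forward direction of (ii) is actually established by the argument as written.

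The paper closes exactly this gap with two intermediate claims. First, $\Gamma$ is not contained in $B_2$ (otherwise $D_\alpha|_{B_2}=2M_2(\alpha)$ would force $K_X\cdot\alpha$ to be even for all $\alpha$, contradicting $K_X^2=1$). Second, either there is an irreducible curve $C$ with $\Supp(\xi)\subset C$ for all $\xi\in\Gamma$ --- in which case one checks $C\in|K_X|$, so $\Gamma\subset C^{(2)}$ and your endgame applies --- or a general $\xi$ has one point constant and off the moving curve, in which case one shows directly that $\Gamma\sim\beta_{K_X}$, i.e.\ $d_1=0$, which satisfies $d_1\ge-3$ and $d_1\ne-3$. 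You need to add this dichotomy (or some equivalent analysis of the geometry of $\mathcal Z_{\Gamma_{j_1}}\to X$) before invoking Lemma~\ref{HomlgCls}; the rest of your argument, including the treatment of the $M_2(x_j)$ components and the identification of the positive-dimensional Abel--Jacobi fiber with $g_2^1(C)$, is sound and agrees with the paper.
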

\begin{proof}
Let $\Gamma_1, \ldots, \Gamma_t$ be the irreducible components of $\Gamma$.
By Lemma~\ref{CurveSim}, there exists $t_1$ with $1 \le t_1 \le t$ such that
for $1 \le i \le t_1$, $\Gamma_i \sim \beta_{C_i} + d_i \beta_2$ for some curve $C_i$
and integer $d_i$, and that for $t_1 < j \le t$, $\Gamma_j \sim d_j \beta_2$ for 
some integer $d_j > 0$. Then, 
\begin{eqnarray}     \label{LowerBd.1}
\beta_{K_X} + d \beta_2 \sim \Gamma = \sum_{i=1}^t \Gamma_i 
\sim \sum_{i=1}^{t_1} (\beta_{C_i} + d_i \beta_2) + \sum_{j=t_1+1}^t (d_j \beta_2).
\end{eqnarray}
So $K_X \sim \sum_{i=1}^{t_1} C_i$. Since $X$ is simply connected,
$K_X = \sum_{i=1}^{t_1} C_i$ as divisors. 
Since every member in $|K_X|$ is a smooth irreducible curve, 
$t_1 = 1$ and $C_1 \in |K_X|$. By \eqref{LowerBd.1}, $d = d_1 + \sum_{j=2}^t d_j \ge d_1$.
To prove the lemma, it suffices to prove $d_1 \ge -3$, i.e., we will assume 
in the rest of the proof that $\Gamma = \Gamma_1$ is irreducible.

We claim that there exists a non-empty open subset $U$ of $\Gamma$ 
such that every $\xi \in U$ consists of two distinct points in $X$.
Indeed, assume that this is not true. Then $\Gamma \subset B_2$. 
Let $\alpha \in H^2(X, \C)$. By abusing notations, we also use $\alpha$ 
to denote a real surface in $X$ representing the cohomology class $\alpha$. 
Note that $D_\alpha|_{B_2} = D_\alpha|_{M_2(X)} = 2M_2(\alpha)$. Thus, we obtain
$$
K_X \cdot \alpha = (\beta_{K_X} + d \beta_2) \cdot D_\alpha
= \Gamma \cdot D_\alpha = \Gamma \cdot D_\alpha|_{B_2} = 2 \Gamma \cdot M_2(\alpha).
$$
So $K_X \cdot \alpha$ is even for every $\alpha \in H^2(X, \C)$. 
This contradicts to $K_X^2 = 1$.

Next, we claim that either $\Gamma \sim \beta_{K_X}$, 
or there exists an irreducible curve $C \subset X$ such that 
$\Supp(\xi) \in C$ for every $\xi \in \Gamma$.
Assume that there is no irreducible curve $C \subset X$ such that 
$\Supp(\xi) \in C$ for every $\xi \in \Gamma$. Then by the previous paragraph,
we conclude that there exist a non-empty open subset $U$ of $\Gamma$ and 
two distinct irreducible curves $\W C_1, \W C_2 \subset X$ such that every $\xi \in U$ 
is of the form $x_1 + x_2$ with $x_1 \in \W C_1$, $x_2 \in \W C_2$ and $x_1 \ne x_2$.
This leads to two (possibly constant) rational maps 
$f_1: \Gamma \to \W C_1$ and $f_2: \Gamma \to \W C_2$.
Choose the real surface $\alpha \subset X$ in the previous paragraph such that
$\alpha, \W C_1$ and $\W C_2$ are in general position. Then, 
$$
K_X \cdot \alpha = \Gamma \cdot D_\alpha 
= \deg(f_1) \W C_1 \cdot \alpha + \deg(f_2) \W C_2 \cdot \alpha.
$$
So $K_X = \deg(f_1) \W C_1 + \deg(f_2) \W C_2$ in $H^2(X, \C)$. Since $X$ is simply-connected,
$K_X = \deg(f_1) \W C_1 + \deg(f_2) \W C_2$ as divisors. 
Since every member in $|K_X|$ is a smooth irreducible curve and $K_X^2 = 1$,
we must have an equality of sets:
$$
\{ \deg(f_1), \deg(f_2) \} = \{1, 0\}.
$$
For simplicity, 
let $\deg(f_1) = 1$ and $\deg(f_2) = 0$. Then, $x_2 \in \Supp(\xi)$ for every $\xi \in \Gamma$,
and $\W C_1 \in |K_X|$. By our assumption, $x_2 \not \in \W C_1$ (otherwise, 
$\Supp(\xi) \in \W C_1$ for every $\xi \in \Gamma$). Thus, $\Gamma = \W C_1 + x_2$. 
So $\Gamma \sim \beta_{\W C_1} = \beta_{K_X}$.

Finally, we assume that there exists an irreducible curve $C \subset X$ such that 
$\Supp(\xi) \in C$ for every $\xi \in \Gamma$.
We claim that $C \in |K_X|$. Note that there exists a positive integer $s$
such that for a general point $x \in C$, there exist $s$ distinct elements 
$\xi_1, \ldots, \xi_s \in \Gamma$ such that $x \in \Supp(\xi_i)$ for every $i$.
Choose the real surface $\alpha \subset X$ such that $\alpha$ and $C$ are 
in general position. Then, we have
$$
K_X \cdot \alpha = s C \cdot \alpha.
$$
So $K_X = sC$ in $H^2(X, \C)$. Since $X$ is simply connected, $K_X = sC$ as divisors. 
Since $K_X^2 = 1$, $s = 1$ and $C \in |K_X|$. Since $\Gamma \in C^{(2)}$ and $g_C = 2$,
$$
\beta_{K_X} + d \beta_2 \sim \Gamma \sim 
(\Xi \cdot \Gamma) \beta_{K_X} + \big (-3(\Xi \cdot \Gamma) + 
(\Theta \cdot \Gamma)\big ) \beta_2
$$
by \eqref{HomlgCls.01}.
Thus, $\Xi \cdot \Gamma = 1$ and $d = -3 + (\Theta \cdot \Gamma)$.
Since $\Theta$ is a nef divisor of $C^{(2)}$, we have $d \ge -3$.
In addition, $d = -3$ if and only if $\Theta \cdot \Gamma = 0$.
Since $\Theta$ is the pull-back of a Theta divisor on ${\rm Jac}_2(C)$ via 
the map ${\rm AJ}: C^{(2)} \to {\rm Jac}_2(C)$, $d = -3$ if and only if 
$\Gamma$ is contracted to a point by ${\rm AJ}$. 
Note that the only positive-dimensional fiber of the map ${\rm AJ}$ is $g_2^1(C) \cong \Pee^1$.
Hence $d = -3$ if and only if $\Gamma = g_2^1(C) \subset C^{(2)}$.
\end{proof}

Fix $C \in |K_X|$ and an isomorphism $\mu_0: \Pee^1 \to g_2^1(C)$. 
Via the inclusion $g_2^1(C) \subset C^{(2)}\subset \Xtwo$, 
regard $\mu_0$ as a morphism from $\Pee^1$ to $\Xtwo$. Then, we have
\beq   \label{DefOfMu0}
[\mu_0: \Pee^1 \to \Xtwo] \in \Mbar_{0, 0}(\Xtwo, \beta_{K_X} - 3\beta_2).
\eeq

\begin{lemma}   \label{mu0}
$h^1(\Pee^1, \mu_0^*T_{\Xtwo}) = p_g - 1$.
\end{lemma}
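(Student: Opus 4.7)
The plan is to pull back the normal bundle sequence of the smooth subvariety $C^{(2)} \subset \Xtwo$ via $\mu_0$, obtaining
\[
0 \to \mu_0^* T_{C^{(2)}} \to \mu_0^* T_{\Xtwo} \to \mu_0^* N_{C^{(2)}/\Xtwo} \to 0
\]
on $\Pee^1$. Taking cohomology then reduces the computation of $h^1(\Pee^1, \mu_0^* T_\Xtwo)$ to controlling the $h^1$ of the two outer terms.

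For the subsheaf $\mu_0^* T_{C^{(2)}}$, I would exploit the Abel--Jacobi map ${\rm AJ}: C^{(2)} \to {\rm Jac}_2(C)$: since $g_C = 2$ and $[K_C]$ is the unique degree-$2$ class with $h^0 > 1$, the map ${\rm AJ}$ is an isomorphism away from $[K_C]$ and contracts $g_2^1(C) = \Pee^1$ to that point. Being a birational morphism between smooth projective surfaces that contracts an irreducible rational curve, it is therefore the blowup at $[K_C]$, so $N_{\Pee^1/C^{(2)}} \cong \mathcal O_{\Pee^1}(-1)$. Combined with $T_{\Pee^1} = \mathcal O_{\Pee^1}(2)$, the induced extension forces $h^1(\Pee^1, \mu_0^* T_{C^{(2)}}) = 0$.

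The substantive computation is then $h^1(\Pee^1, \mu_0^* N_{C^{(2)}/\Xtwo})$. I would identify this normal bundle via the universal divisor $\mathcal Z_2(C) \subset C^{(2)} \times C$ with projections $\w \pi_1, \w \pi_2$: a fiberwise check on the locus of reduced $\xi$ shows $N_{C^{(2)}/\Xtwo} \cong \w \pi_{1*} \w \pi_2^* N_{C/X}$, and since $\mathcal Z_2(C) \times_{C^{(2)}} \Pee^1 \cong C$ via the double cover $\pi: C \to \Pee^1$ defined by $g_2^1$, flat base change yields $\mu_0^* N_{C^{(2)}/\Xtwo} \cong \pi_* N_{C/X}$. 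Hence
\[
h^1(\Pee^1, \mu_0^* T_{\Xtwo}) = h^1(\Pee^1, \pi_* N_{C/X}) = h^1(C, N_{C/X}),
\]
using finiteness of $\pi$ for the Leray step.

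Finally, since $C \in |K_X|$, adjunction gives $N_{C/X} = K_X|_C$ and $K_C = 2K_X|_C$, so Serre duality on $C$ identifies $h^1(C, K_X|_C)$ with $h^0(C, K_X|_C)$. This last number I would read off from the restriction sequence
\[
0 \to \mathcal O_X \to \mathcal O_X(K_X) \to K_X|_C \to 0
\]
together with $H^1(X, \mathcal O_X) = 0$ (simple connectedness), yielding $h^0(C, K_X|_C) = p_g - 1$. The main technical hurdle is justifying $N_{C^{(2)}/\Xtwo} \cong \w \pi_{1*} \w \pi_2^* N_{C/X}$ globally (not merely on the reduced locus) and the accompanying flat base change; everything downstream is standard Riemann--Roch and Serre duality.
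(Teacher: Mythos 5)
Your proof is correct and shares its skeleton with the paper's: both restrict the normal bundle sequence of $C^{(2)} \subset \Xtwo$ to $\Gamma = g_2^1(C)$, both identify $\Gamma$ as the $(-1)$-curve contracted by the Abel--Jacobi map (so the subbundle term contributes nothing to $h^1$), and both rest on the identification $N_{C^{(2)} \subset \Xtwo} = \pi_{1*}\pi_2^*\mathcal O_X(K_X)|_{C^{(2)}}$ --- the ``main technical hurdle'' you flag is exactly the result of \cite{AIK} that the paper cites, and the base change across the finite flat degree-$2$ map $\w \pi_1$ is harmless. Where you diverge is the endgame: you compute $h^1\big(\Pee^1, \mu_0^*N_{C^{(2)} \subset \Xtwo}\big) = h^1(C, \mathcal O_C(K_X))$ by Leray for the finite double cover and then apply Serre duality on $C$ (using $K_C = 2K_X|_C$), whereas the paper computes $h^0\big(\Gamma, N_{C^{(2)} \subset \Xtwo}|_\Gamma\big) = p_g - 1$ directly from the restriction sequence for $\mathcal O_X(K_X)$ and combines this with the degree $-2$ to pin down the splitting type $\mathcal O_\Gamma(-p_g) \oplus \mathcal O_\Gamma(p_g-2)$, hence the full decomposition of $T_{\Xtwo}|_\Gamma$. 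Your route is marginally shorter for the lemma itself, but the paper's extra precision is not wasted: the explicit splitting \eqref{mu0.5} is reused in the proof of Theorem~\ref{theorem_iii}~(ii) to normalize the extension \eqref{prop_iii.9}, so the computation you bypass here would still have to be done later.
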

\par\noindent
{\it Proof.}
From the exact sequence $0 \to \mathcal O_X \to \mathcal O_X(K_X) \to 
\mathcal O_C(K_X) \to 0$, we get
$$
0 \to H^0(X, \mathcal O_X) \to H^0(X, \mathcal O_X(K_X)) \to 
H^0(C, \mathcal O_C(K_X)) \to H^1(X, \mathcal O_X).
$$
Since $X$ is a simply connected surface, $h^1(X, \mathcal O_X) = 0$. So 
\beq \label{mu0.1}
h^0(C, \mathcal O_C(K_X)) = p_g - 1.
\eeq

Put $\Gamma = g_2^1(C) \subset C^{(2)} \subset \Xtwo$. Since the smooth rational curve $\Gamma$ 
is the only positive-dimensional fiber of ${\rm AJ}: C^{(2)} \to {\rm Jac}_2(C)$, 
$\Gamma$ is a $(-1)$-curve contracted by ${\rm AJ}$. 
So the normal bundle $N_{\Gamma \subset C^{(2)}}$ of $\Gamma$ in $C^{(2)}$ is given by:
\beq \label{mu0.2}
N_{\Gamma \subset C^{(2)}} = \mathcal O_\Gamma(-1).
\eeq
Since $T_\Gamma = \mathcal O_\Gamma(2)$, we see from 
$0 \to T_\Gamma \to T_{C^{(2)}}|_\Gamma \to N_{\Gamma \subset C^{(2)}} \to 0$ that 
\beq \label{mu0.2.1}
T_{C^{(2)}}|_\Gamma = \mathcal O_\Gamma(2) \oplus \mathcal O_\Gamma(-1).
\eeq
Since $K_{\Xtwo} \cdot \Gamma 
= D_{K_X} \cdot (\beta_{K_X} - 3\beta_2) = K_X^2 = 1$, we have
$$
\deg N_{\Gamma \subset \Xtwo} = -K_{\Xtwo} \cdot \Gamma - \deg T_\Gamma = -3.
$$
So $\deg \big (N_{C^{(2)} \subset \Xtwo}|_\Gamma \big ) = -2$ in view of 
\eqref{mu0.2} and the exact sequence
\beq \label{mu0.3}
0 \to N_{\Gamma \subset C^{(2)}} \to N_{\Gamma \subset \Xtwo} \to 
N_{C^{(2)} \subset \Xtwo}|_\Gamma \to 0.
\eeq

We claim that $N_{C^{(2)} \subset \Xtwo}|_\Gamma = \mathcal O_\Gamma(-p_g) \oplus 
\mathcal O_\Gamma(p_g-2)$. Since $N_{C^{(2)} \subset \Xtwo}|_\Gamma$ is 
a degree-$(-2)$ rank-$2$ bundle on $\Gamma \cong \Pee^1$, it suffices to prove 
\beq \label{mu0.4}
h^0 \big (\Gamma, N_{C^{(2)} \subset \Xtwo}|_\Gamma \big ) = p_g-1.
\eeq
It is known from \cite{AIK} that $N_{C^{(2)} \subset \Xtwo} = \pi_{1*}\pi_2^*\mathcal O_X(C)|_{C^{(2)}}
= \pi_{1*}\pi_2^*\mathcal O_X(K_X)|_{C^{(2)}}$
where $\pi_1: {\mathcal Z}_2 \to \Xtwo$ and $\pi_2: {\mathcal Z}_2 \to X$ are 
the natural projections. Let ${\mathcal Z}_\Gamma = \pi_1^{-1}(\Gamma) \subset {\mathcal Z}_2$.
Note that $\pi_2 \big ( {\mathcal Z}_\Gamma \big ) = C$.
Put $\w \pi_1 = \pi_1|_{{\mathcal Z}_\Gamma}: {\mathcal Z}_\Gamma \to \Gamma$
and $\w \pi_2 = \pi_2|_{{\mathcal Z}_\Gamma}: {\mathcal Z}_\Gamma \to C$.
Then, $\w \pi_2$ is an isomorphism. Up to an isomorphism, $\w \pi_1$ is the double cover
$C \to \Pee^1$ corresponding to the linear system $g_2^1(C)$. We have
\beq \label{mu0.401}
N_{C^{(2)} \subset \Xtwo}|_\Gamma = \pi_{1*}\pi_2^*\mathcal O_X(K_X)|_\Gamma 
= \w \pi_{1*}\w \pi_2^*\big ( \mathcal O_X(K_X)|_C \big )
= \w \pi_{1*}\w \pi_2^*\mathcal O_C(K_X).
\eeq
Thus $H^0 \big (\Gamma, N_{C^{(2)} \subset \Xtwo}|_\Gamma \big )
= H^0 \big (\Gamma, \w \pi_{1*}\w \pi_2^*\mathcal O_C(K_X) \big )
= H^0 \big ({\mathcal Z}_\Gamma, \w \pi_2^*\mathcal O_C(K_X) \big )$.
Since $\w \pi_2$ is an isomorphism, we conclude from \eqref{mu0.1} that 
$$
H^0 \big (\Gamma, N_{C^{(2)} \subset \Xtwo}|_\Gamma \big )
\cong H^0(C, \mathcal O_C(K_X)) = p_g-1.
$$
This proves \eqref{mu0.4}. Therefore, we obtain 
\beq \label{mu0.5}
N_{C^{(2)} \subset \Xtwo}|_\Gamma = \mathcal O_\Gamma(-p_g) \oplus \mathcal O_\Gamma(p_g - 2).
\eeq

Now the exact sequence \eqref{mu0.3} becomes 
$$
0 \to \mathcal O_\Gamma(-1) \to N_{\Gamma \subset \Xtwo} \to 
\mathcal O_\Gamma(-p_g) \oplus \mathcal O_\Gamma(p_g - 2) \to 0
$$
which splits since $1 \le p_g \le 2$.
So $N_{\Gamma \subset \Xtwo} = \mathcal O_\Gamma(-1) \oplus \mathcal O_\Gamma(-p_g) 
\oplus \mathcal O_\Gamma(p_g - 2)$. 
From $T_\Gamma = \mathcal O_\Gamma(2)$ and the exact sequence
$
0 \to T_\Gamma \to T_{\Xtwo}|_\Gamma \to N_{\Gamma \subset \Xtwo} \to 0,
$
we see that $T_{\Xtwo}|_\Gamma = \mathcal O_\Gamma(2) \oplus \mathcal O_\Gamma(-1) 
\oplus \mathcal O_\Gamma(-p_g) \oplus \mathcal O_\Gamma(p_g - 2)$. Finally, 
\begin{equation}
h^1(\Pee^1, \mu_0^*T_{\Xtwo}) = h^1 \big (\Pee^1, \mu_0^*(T_{\Xtwo}|_\Gamma) \big )
= p_g - 1.
\tag*{$\qed$}
\end{equation}

\begin{theorem}   \label{theorem_iii}
Let $X$ be a simply connected minimal surface of general type with 
$K_X^2 = 1$ and $1 \le p_g \le 2$ such that every member in $|K_X|$ is smooth. Then,
\begin{enumerate}
\item[{\rm (i)}] $\Mbar_{0, 0}(\Xtwo, \beta_{K_X} - 3\beta_2) \cong |K_X| \cong \Pee^{p_g-1}$;

\item[{\rm (ii)}] $\langle 1 \rangle_{0, \,\, \beta_{K_X} - 3 \beta_2}^{\Xtwo} 
= (-1)^{\chi(\mathcal O_X)}$.
\end{enumerate}
\end{theorem}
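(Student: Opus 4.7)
The plan splits into a classification of stable maps for (i) and an obstruction bundle computation for (ii).

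For \textbf{Part (i)}, let $[\mu:D\to\Xtwo]$ be a stable map of class $\beta_{K_X}-3\beta_2$ with $g=0$, $r=0$. First I would decompose $D$ into irreducible components $D=D_1\cup\cdots\cup D_k$. By Lemma~\ref{CurveSim}, each non-contracted image $\mu(D_i)$ has class of the form $\beta_{C_i}+e_i\beta_2$ (with $C_i$ effective, possibly zero); contracted components contribute nothing. Matching $\sum_i d_i\mu_*[D_i]=\beta_{K_X}-3\beta_2$ and applying Lemma~\ref{LowerBd} (together with the fact that $K_X$ admits only the smooth irreducible canonical curves) forces exactly one non-contracted component whose image is $g_2^1(C)$ for some $C\in|K_X|$ with the minimal value $e=-3$; genus-$0$, zero-marked-point stability then rules out any surviving contracted component (such a component would carry only one special point and violate stability). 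Hence $D\cong\Pee^1$ and $\mu$ is an isomorphism onto $g_2^1(C)$. Such a stable map has trivial automorphism group, so $[\mu]\mapsto C$ is a bijection $\Mbar_{0,0}(\Xtwo,\beta_{K_X}-3\beta_2)\to|K_X|$. Comparing tangent spaces, Lemma~\ref{mu0} (together with $h^0(\Pee^1,\mu_0^*T_\Xtwo)=3+(p_g-1)=p_g+2$) gives $\dim T_{[\mu_0]}\Mbar=(p_g+2)-\dim\mathrm{Aut}(\Pee^1)=p_g-1=\dim|K_X|$, promoting the bijection to an isomorphism of smooth stacks.

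For \textbf{Part (ii)}, I would apply Proposition~\ref{virtual-prop}. By Lemma~\ref{mu0}, the sheaf $\mathrm{Ob}:=R^1(f_{1,0})_*\mathrm{ev}_1^*T_\Xtwo$ is locally free of rank $p_g-1$ on $\Mbar_{0,0}\cong|K_X|$, and in the unstable case $(g,r)=(0,0)$ the formula becomes $[\Mbar]^\vir=c_{p_g-1}(\mathrm{Ob})\cap[|K_X|]$, so the invariant is $\int_{|K_X|}c_{p_g-1}(\mathrm{Ob})$. For $p_g=1$, $\mathrm{Ob}$ has rank $0$, $|K_X|$ is a point, and the invariant is $1=(-1)^2=(-1)^{\chi(\mathcal O_X)}$.

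For $p_g=2$, $|K_X|\cong\Pee^1$ and $\mathrm{Ob}$ is a line bundle whose degree is the invariant. I would identify $\mathrm{Ob}$ by globalizing the chain of isomorphisms in the proof of Lemma~\ref{mu0}. Let $\pi_X:X\times|K_X|\to X$, $\pi:X\times|K_X|\to|K_X|$ and $p_\mathcal C:\mathcal C\to|K_X|$ be the universal canonical curve; then the tangent-normal sequences of $g_2^1(C)\subset C^{(2)}\subset\Xtwo$ relativize to sequences over $\mathcal G$, and using $N_{C^{(2)}/\Xtwo}\cong\mathcal O_X(K_X)^{[2]}|_{C^{(2)}}$ together with the isomorphism $\mathcal Z_\mathcal G\cong\mathcal C$ (so that $\tilde p_\mathcal C:\mathcal C\to\mathcal G$ is the fiberwise double cover $C\to g_2^1(C)$), the sheaf $\mathrm{Ob}$ is expressed via $R^1(p_\mathcal C)_*$ of $\pi_X^*\mathcal O_X(K_X)|_\mathcal C$, twisted appropriately. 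Combined with the Euler sequence
\[
0\longrightarrow\mathcal O_{|K_X|}(-1)\longrightarrow H^0(X,K_X)\otimes\mathcal O_{|K_X|}\longrightarrow E\longrightarrow 0
\]
for $E=(p_\mathcal C)_*(\pi_X^*K_X|_\mathcal C)$, and a careful bookkeeping of the twists contributed by the relative dualizing sheaf $\omega_{\mathcal C/|K_X|}$ and by the double cover, this identification yields $\mathrm{Ob}\cong\mathcal O_{\Pee^1}(-1)$, giving invariant $-1=(-1)^3=(-1)^{\chi(\mathcal O_X)}$.

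\textbf{Main obstacle.} The delicate step is the global identification of $\mathrm{Ob}$ in the $p_g=2$ case. The fiberwise decomposition $T_\Xtwo|_{g_2^1(C)}=\mathcal O(2)\oplus\mathcal O(-1)\oplus\mathcal O(-p_g)\oplus\mathcal O(p_g-2)$ is not canonical globally, and the twists picked up from relative Serre duality (the factor $\omega_{\mathcal C/|K_X|}\otimes\pi_X^*K_X^{-1}|_\mathcal C$), from $\tilde p_{\mathcal C*}$ (discriminant of the family of hyperelliptic double covers), and from the Euler sequence of $|K_X|$ all have to be tracked; the task is to verify that these twists combine correctly so that the fiberwise Serre duality $H^1(C,K_X|_C)\cong H^0(C,K_X|_C)^\vee$ globalizes to $\mathrm{Ob}\cong E^\vee=\mathcal O(-1)$ rather than to some other twisted version.
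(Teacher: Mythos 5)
Your part (i) follows the paper's route: decompose via Lemma~\ref{CurveSim}, force a single component mapping isomorphically onto $g_2^1(C)$ by Lemma~\ref{LowerBd}, and use Lemma~\ref{mu0} to get smoothness of the moduli space of the expected dimension $p_g-1$. One point you gloss over: a set-theoretic bijection between $\Mbar_{0,0}(\Xtwo,\beta_{K_X}-3\beta_2)$ and $|K_X|$ plus smoothness of both sides does not by itself give an isomorphism; you need an actual morphism in one direction. The paper builds $\psi\colon |K_X|\to\Mbar$ from the universal family (the exceptional divisor $\mathcal E$ of the blow-up of the relative Jacobian ${\rm Jac}_2(\mathcal C/|K_X|)$ along the canonical section) and then invokes Zariski's Main Theorem. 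This is routine but should be said; your tangent-space count $h^0(N_{\mu_0})=p_g-1$ is a legitimate substitute for the paper's appeal to Proposition~\ref{virtual-prop} once that morphism is in place.

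The genuine gap is in part (ii) for $p_g=2$. The entire content of the theorem in that case is the computation $\deg\big(R^1f_*\Psi^*T_\Xtwo\big)=-1$, and your proposal asserts the answer (``a careful bookkeeping of the twists\dots yields $\mathrm{Ob}\cong\mathcal O_{\Pee^1}(-1)$'') while simultaneously flagging exactly that bookkeeping as the unresolved ``main obstacle.'' Without it you have not ruled out $\mathcal O(+1)$ or $\mathcal O(-3)$, i.e.\ you have not determined the sign, which is the point of the statement. The paper carries this out concretely: after reducing to $R^1f_*\big(\w p_*\mathcal O_{\W X}(E)\big)$ (where $\W X=\mathcal C$ is the blow-up of $X$ at the base point of $|K_X|$, $E$ the exceptional curve, and $\w p\colon\W X\to\mathcal E$ the fiberwise hyperelliptic double cover), it identifies $\mathcal E\cong\mathbb F_2$, computes the branch divisor $B=6\sigma+10\Gamma$, applies Grothendieck--Riemann--Roch to get $c_1\big(\w p_*\mathcal O_{\W X}(E)\big)=-2\sigma-5\Gamma$, pins down the extension structure of $\w p_*\mathcal O_{\W X}(E)$ using the fiberwise splitting from Lemma~\ref{mu0}, and pushes forward. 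Your alternative route can in fact be completed, and more cheaply than you fear: since $\w p$ is finite, $R^1f_*\w p_*=R^1(\w q_1)_*$, so no discriminant bookkeeping for the double cover is needed; one gets $R^1f_*\Psi^*T_\Xtwo\cong\w q_1^*\mathcal O_{|K_X|}(1)\otimes R^1(\w q_1)_*\w q_2^*\mathcal O_X(K_X)$ from $\mathcal O_{\mathcal C}(\mathcal C)=(\mathcal O_{|K_X|}(1)\boxtimes\mathcal O_X(K_X))|_{\mathcal C}$, and relative Serre duality along $\w q_1$ together with $\omega_{\W X/|K_X|}=3C+2E$ and $(\w q_1)_*\mathcal O_{\W X}(E)=\mathcal O_{|K_X|}$ gives $R^1(\w q_1)_*\w q_2^*\mathcal O_X(K_X)\cong\mathcal O_{|K_X|}(-2)$, hence $\mathcal O(-1)$ as claimed. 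But some such computation must actually appear; as written, the proposal stops exactly where the proof begins.
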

\par\noindent
{\it Proof.}
For simplicity, we denote $\Mbar_{0, 0}(\Xtwo, \beta_{K_X} - 3\beta_2)$ by $\Mbar$.

(i) Let $[\mu: D \to \Xtwo] \in \Mbar$. Put $\Gamma = \mu(D)$.
As in the first paragraph in the proof of Lemma~\ref{LowerBd}, let $\Gamma_1, \ldots, \Gamma_t$ be 
the irreducible components of $\Gamma$. Let $m_i$ be the degree of the restriction
$\mu|_{\mu^{-1}(\Gamma_i)}: \mu^{-1}(\Gamma_i) \to \Gamma_i$. Then, 
\beq   \label{prop_iii.1}
\beta_{K_X} - 3 \beta_2 = \mu_*[D] = \sum_{i=1}^t m_i [\Gamma_i].
\eeq
By Lemma~\ref{CurveSim}, there exists some $t_1$ with $1 \le t_1 \le t$ such that
for $1 \le i \le t_1$, $\Gamma_i \sim \beta_{C_i} + d_i \beta_2$ for some curve $C_i$
and integer $d_i$, and that for $t_1 < j \le t$, $\Gamma_j \sim d_j \beta_2$ for 
some integer $d_j > 0$. Combining with \eqref{prop_iii.1}, we obtain
$$
\beta_{K_X} - 3 \beta_2 
= \sum_{i=1}^{t_1} m_i(\beta_{C_i} + d_i \beta_2) + \sum_{j=t_1+1}^t m_j(d_j \beta_2).
$$
So $K_X \sim \sum_{i=1}^{t_1} m_i C_i$. Since $X$ is simply connected,
$K_X = \sum_{i=1}^{t_1} m_i C_i$ as divisors. 
Since every member in $|K_X|$ is smooth, 
$t_1 = 1$, $m_1 = 1$ and $C_1 \in |K_X|$. By Lemma~\ref{LowerBd}, $t=1$ and 
$\Gamma_1 = g_2^1(C_1)$. So $\mu(D) = g_2^1(C_1)$. Since $m_1 = 1$ and there is no marked points, 
$D = \Pee^1$ and $\mu$ is an isomorphism from $D = \Pee^1$ to $g_2^1(C_1)$. 
Therefore, $\Mbar = |K_X|$ as sets. Since the stable map $\mu: D \to \Xtwo$ has 
the trivial automorphism group, $\Mbar$ is a fine moduli space.

Next, we construct a morphism $\psi: |K_X| \to \Mbar$. Let $\mathcal C \subset |K_X| \times X$
be the family of curves parametrized by $|K_X|$. Then we have the relative Hilbert schemes
$(\mathcal C/|K_X|)^{[2]} \subset (|K_X| \times X/|K_X|)^{[2]}$, i.e.,
$(\mathcal C/|K_X|)^{(2)} \subset |K_X| \times \Xtwo$. Let 
$$
\W \Psi: (\mathcal C/|K_X|)^{(2)} \to \Xtwo
$$ 
be the composition of the inclusion $(\mathcal C/|K_X|)^{(2)} \subset |K_X| \times \Xtwo$ 
and the projection $|K_X| \times \Xtwo \to \Xtwo$. In the relative Jacobian 
${\rm Jac}_2(\mathcal C/|K_X|)$, let $\Sigma$ be the section to the natural projection
${\rm Jac}_2(\mathcal C/|K_X|) \to |K_X|$ such that for $C \in |K_X|$, the point 
$\Sigma(C) = K_C \in {\rm Jac}_2(C)$. Then the natural map $(\mathcal C/|K_X|)^{(2)}
\to {\rm Jac}_2(\mathcal C/|K_X|)$ is the blowing-up of ${\rm Jac}_2(\mathcal C/|K_X|)$
along $\Sigma$. Let $\mathcal E \subset (\mathcal C/|K_X|)^{(2)}$ be 
the exceptional divisor of this blowing-up. Put 
$$
\Psi = \W \Psi|_{\mathcal E}: \mathcal E \to \Xtwo.
$$
Then $\Psi$ is a family of stable maps in $\Mbar$ parametrized by $|K_X|$. 
By the universality of the moduli space $\Mbar$, $\Psi$ induces a morphism $\psi: |K_X| \to \Mbar$.
By the discussion in the previous paragraph, we conclude that $\psi$ is bijective.

By \eqref{ExpDim}, the expected dimension of $\Mbar$ is $0$.
Since $\dim \Mbar = \dim |K_X| = p_g-1$, the excess dimension of $\Mbar$ is equal to $p_g-1$.
By Lemma~\ref{mu0}, $R^1(f_{1,0})_*{\rm ev}_1^*T_{\Xtwo}$ is a rank-$(p_g-1)$ locally free sheaf,
where $f_{1,0}: \Mbar_{0, 1}(\Xtwo, \beta_{K_X} - 3\beta_2) \to \Mbar$ is the forgetful map and
${\rm ev}_1: \Mbar_{0, 1}(\Xtwo, \beta_{K_X} - 3\beta_2) \to \Xtwo$ is the evaluation map.
By Proposition~\ref{virtual-prop}, 
$\Mbar$ is smooth (as a scheme since it is a fine moduli space). 
By Zariski's Main Theorem, the bijective morphism $\psi: |K_X| \to \Mbar$ is an isomorphism.
So $\Mbar \cong |K_X| \cong \Pee^{p_g-1}$. Note also from Proposition~\ref{virtual-prop} that 
\beq            \label{prop_iii.2}
[\Mbar]^{\text{\rm vir}} = c_{p_g-1} \big (R^1(f_{1,0})_*{\rm ev}_1^*T_{\Xtwo} \big ) \cap [\Mbar].
\eeq

(ii) Since $X$ is simply connected, we have $\chi(\mathcal O_X) = 1 + p_g$.
When $p_g = 1$, $\Mbar_{0, 0}(\Xtwo, \beta_{K_X} - 3\beta_2)$ is a smooth point;
so $\langle 1 \rangle_{0, \,\, \beta_{K_X} - 3 \beta_2}^{\Xtwo} = 1$, and our formula holds.
In the rest of the proof, let $p_g = 2$. We will prove that 
$\langle 1 \rangle_{0, \,\, \beta_{K_X} - 3 \beta_2}^{\Xtwo} = -1$.

We adopt the notations from the proof of (i). For simplicity, put $\Mbar = |K_X|$.
Then we have identifications $\Mbar_{0, 1}(\Xtwo, \beta_{K_X} - 3\beta_2) = \mathcal E$
and ${\rm ev}_1 = \Psi$. Moreover, the forgetful map 
$f_{1,0}: \Mbar_{0, 1}(\Xtwo, \beta_{K_X} - 3\beta_2) \to \Mbar$
is identified with the natural projection $f: \mathcal E \to |K_X|$.
In view of \eqref{prop_iii.2}, we have 
\beq            \label{prop_iii.3}
[\Mbar]^{\text{\rm vir}} = c_1 \big (R^1f_*\Psi^*T_{\Xtwo} \big ) \cap [\Mbar].
\eeq

To understand the line bundle $R^1f_*\Psi^*T_{\Xtwo}$, we take the exact sequence of 
relative tangent bundles associated to the pair 
$(\mathcal C/|K_X|)^{(2)} \subset |K_X| \times \Xtwo$:
$$
0 \to T_{(\mathcal C/|K_X|)^{(2)}/|K_X|} \to 
T_{|K_X| \times \Xtwo/|K_X|}|_{(\mathcal C/|K_X|)^{(2)}} \to 
N_{(\mathcal C/|K_X|)^{(2)} \subset |K_X| \times \Xtwo} \to 0.
$$
Note that $T_{|K_X| \times \Xtwo/|K_X|}|_{(\mathcal C/|K_X|)^{(2)}}
= \W \Psi^*T_\Xtwo$. In addition, we have 
\beq            \label{prop_iii.301}
N_{(\mathcal C/|K_X|)^{(2)} \subset |K_X| \times \Xtwo} = 
\big ( p_{1*}p_2^*\mathcal O_{|K_X| \times X}(\mathcal C) \big )|_{(\mathcal C/|K_X|)^{(2)}}
\eeq
by \cite{AIK}, 
where $p_1: |K_X| \times \mathcal Z_2 \to |K_X| \times \Xtwo$ and 
$p_2: |K_X| \times \mathcal Z_2 \to |K_X| \times X$ are the natural projections. 
Therefore, the above exact sequence becomes
$$
0 \to T_{(\mathcal C/|K_X|)^{(2)}/|K_X|} \to \W \Psi^*T_\Xtwo \to 
\big ( p_{1*}p_2^*\mathcal O_{|K_X| \times X}(\mathcal C) \big )|_{(\mathcal C/|K_X|)^{(2)}} \to 0.
$$
Restricting it to $\mathcal E \subset (\mathcal C/|K_X|)^{(2)}$, we obtain the exact sequence
\beq            \label{prop_iii.4}
0 \to T_{(\mathcal C/|K_X|)^{(2)}/|K_X|}|_{\mathcal E} \to \Psi^*T_\Xtwo \to 
\big ( p_{1*}p_2^*\mathcal O_{|K_X| \times X}(\mathcal C) \big )|_{\mathcal E} \to 0.
\eeq
Let $\mathcal Z_{\mathcal E} = p_1^{-1}(\mathcal E)$. 
Then $p_2 \big ( \mathcal Z_{\mathcal E} \big ) = \mathcal C$.
Put $\w p_1 = p_1|_{\mathcal Z_{\mathcal E}}: \mathcal Z_{\mathcal E} \to \mathcal E$ and 
$\w p_2 = p_2|_{\mathcal Z_{\mathcal E}}: \mathcal Z_{\mathcal E} \to \mathcal C$.
Then the exact sequence \eqref{prop_iii.4} can be rewritten as 
\beq            \label{prop_iii.5}
0 \to T_{(\mathcal C/|K_X|)^{(2)}/|K_X|}|_{\mathcal E} \to \Psi^*T_\Xtwo \to 
\w p_{1*}\w p_2^*\mathcal O_{\mathcal C}(\mathcal C) \to 0.
\eeq
By \eqref{mu0.2.1}, $R^1f_*\big (T_{(\mathcal C/|K_X|)^{(2)}/|K_X|}|_{\mathcal E} \big ) = 0$.
Thus applying $f_*$ to \eqref{prop_iii.5} yields
$$
R^1f_*\Psi^*T_\Xtwo \cong
R^1f_*\big ( \w p_{1*}\w p_2^*\mathcal O_{\mathcal C}(\mathcal C) \big ).
$$
Note that $\w p_2: \mathcal Z_{\mathcal E} \to \mathcal C$ is an isomorphism. 
Via this isomorphism, $\w p_1: \mathcal Z_{\mathcal E} \to \mathcal E$
is identified with the natural double cover $\w p: \mathcal C \to \mathcal E$. So
\beq            \label{prop_iii.6}
R^1f_*\Psi^*T_\Xtwo \cong
R^1f_*\big ( \w p_*\mathcal O_{\mathcal C}(\mathcal C) \big ).
\eeq

Since $K_X^2 = 1$, the complete linear system $|K_X|$ has a unique base point, denoted by $x_0$.
The blowing-up morphism $\w q_2: \W X \to X$ of $X$ at $x_0$ resolves the rational map 
$X \dasharrow |K_X|$, and leads to a morphism $\w q_1: \W X \to |K_X|$.
In addition, $\W X = \mathcal C$, the inclusion $\mathcal C \subset 
|K_X| \times X$ is given by the map $(\w q_1, \w q_2): \W X  \to |K_X| \times X$,
and there exists a commutative diagrams of morphisms:
$$
\begin{array}{ccccc} 
\W X = \mathcal C && \overset {\w p} \longrightarrow && \mathcal E \\
&\searrow^{\w q_1}&&\swarrow_f&\\
&&|K_X|&& 
\end{array} 
$$
From the exact sequence $0 \to T_{\mathcal C} \to T_{|K_X| \times X}|_{\mathcal C} \to 
\mathcal O_{\mathcal C}(\mathcal C) \to 0$, we get
\beq            \label{prop_iii.601}
\mathcal O_{\mathcal C}(\mathcal C) \cong \mathcal O_{\W X}(K_{\W X}) \otimes
\w q_1^*\mathcal O_{|K_X|}(2) \otimes \w q_2^*\mathcal O_X(-K_X)
\cong \mathcal O_{\W X}(E) \otimes \w q_1^*\mathcal O_{|K_X|}(2)
\eeq
where $E \subset \W X$ is the exceptional curve. Combining with \eqref{prop_iii.6}, we obtain
\beq            \label{prop_iii.7}
R^1f_*\Psi^*T_\Xtwo \cong \mathcal O_{|K_X|}(2) \otimes
R^1f_*\big ( \w p_*\mathcal O_{\W X}(E) \big ).
\eeq

Next, we determine the rational ruled surface $\mathcal E$.
Let $\sigma = \w p(E)$. Since $E$ is a section to $\w q_1$, $\sigma$ is a section to $f$.
Let $C$ be a fiber of $\w q_1$. Then $\Gamma \overset {\rm def} = g_2^1(C)$ is 
the fiber of $f$ over the point $\w q_1(C)$, and $K_{\W X} = \w q_2^*K_X + E = C + 2E$.
By the adjunction formula, $K_C = (K_{\W X} + C)|_C = 2(E|_C) = 2(E \cap C)$.
So the double cover $C \to \Gamma$ is ramified at the intersection point $E \cap C$.
It follows that the double cover $\w p$ is ramified along $E$. Thus $\w p^*\sigma = 2E$. 
By the projection formula, $\sigma^2 = (\w p^*\sigma)^2/2 = 2E^2 = -2$. Thus, 
$\mathcal E$ is the Hirzebruch surface $\mathbb F_2 = 
{\mathbb P}\big ( \mathcal O_{|K_X|} \oplus \mathcal O_{|K_X|}(-2) \big )$
with $\mathcal O_{\mathcal E}(1) = \mathcal O_{\mathcal E}(\sigma)$.
It follows that $K_{\mathcal E} = -2\sigma - 4\Gamma$.

Let $B \subset \mathcal E$ be the branch locus of the double cover $\w p$,
and $\W B = \w p^{-1}(B)$. Then $B$ and $\W B$ are smooth, $\sigma \subset B$, 
$E \subset \W B$, and $B = 2L$ for some divisor $L$ on $\mathcal E$. Also,
$$
C + 2E = K_{\W X} = \w p^*(K_{\mathcal E}) + \W B = \w p^*(-2\sigma - 4\Gamma) + \W B
= -4E - 4C + \W B.
$$
So $\W B = 6E + 5C$, and $B = \w p_*\W B = 6\sigma + 10 \Gamma$ (thus $B$ is the disjoint union 
of $\sigma$ and a smooth curve in $|5\sigma + 10 \Gamma|$).
Since $\w p_*\mathcal O_{\W X} = \mathcal O_X \oplus \mathcal O_X(-L)$,
\begin{eqnarray}    \label{prop_iii.8}
   c_1\big ( \w p_*\mathcal O_{\W X}(E) \big ) 
&=&{\rm ch}_1\big ( \w p_!(\mathcal O_{\W X}(E)) \big ) 
   = \big \{ \w p_*({\rm ch}( \mathcal O_{\W X}(E)) \cdot {\rm td}(T_{\w p})) \big \}_1 
                \nonumber  \\
&=&\w p_*E + c_1\big ( \w p_*\mathcal O_{\W X} \big )
   = \sigma + (-L) = -2\sigma - 5\Gamma
\end{eqnarray}
by the Grothendieck-Riemann-Roch Theorem, where $T_{\w p}$ is the relative tangent sheaf of $\w p$.
Since $h^0(\mathcal E, \w p_*\mathcal O_{\W X}(E)) = h^0(\W X, \mathcal O_{\W X}(E)) = 1$,
there exists an injection $\mathcal O_{\mathcal E} \to \w p_*\mathcal O_{\W X}(E)$
which in turn induces an injection $\mathcal O_{\mathcal E}(a\sigma) \to 
\w p_*\mathcal O_{\W X}(E)$ with $a \ge 0$ and with torsion-free quotient 
$\w p_*\mathcal O_{\W X}(E)/\mathcal O_{\mathcal E}(a\sigma)$. So we have an exact sequence
\beq            \label{prop_iii.9}
0 \to \mathcal O_{\mathcal E}(a\sigma) \to \w p_*\mathcal O_{\W X}(E) \to 
\mathcal O_{\mathcal E}((-a-2)\sigma - 5\Gamma) \otimes I_\eta \to 0
\eeq
where $\eta$ is a $0$-cycle on $\mathcal E$. 
By \eqref{mu0.5}, \eqref{prop_iii.301} and \eqref{prop_iii.601}, we get
\begin{eqnarray*}
   \mathcal O_\Gamma \oplus \mathcal O_\Gamma(-2) 
&=&N_{C^{(2)} \subset \Xtwo}|_\Gamma 
   = \big ( N_{(\mathcal C/|K_X|)^{(2)} \subset |K_X| \times \Xtwo} \big )|_\Gamma  \\
&=&\big ( p_{1*}p_2^*\mathcal O_{|K_X| \times X}(\mathcal C) \big )|_\Gamma 
   = \big ( \w p_{1*}\w p_2^*\mathcal O_{\mathcal C}(\mathcal C) \big )|_\Gamma    \\
&=&\Big ( \w p_*\big ( \mathcal O_{\W X}(E) \otimes \w q_1^*\mathcal O_{|K_X|}(2) 
       \big ) \Big )|_\Gamma
   = \big ( \w p_*\mathcal O_{\W X}(E) \big )|_\Gamma.
\end{eqnarray*}
Since this holds for every fiber $\Gamma$ of $f$, we conclude from \eqref{prop_iii.9} that 
$a = 0$ and $\eta = \emptyset$. So the exact sequence \eqref{prop_iii.9} is simplified to
$$
0 \to \mathcal O_{\mathcal E} \to \w p_*\mathcal O_{\W X}(E) \to 
\mathcal O_{\mathcal E}(-2\sigma - 5\Gamma) \to 0.
$$
Applying the functor $f_*$ to the above exact sequence yields 
$$
R^1f_*\big ( \w p_*\mathcal O_{\W X}(E) \big ) 
\cong R^1f_*\mathcal O_{\mathcal E}(-2\sigma - 5\Gamma)
\cong \mathcal O_{|K_X|}(-5) \otimes R^1f_*\mathcal O_{\mathcal E}(-2\sigma). 
$$
Since $\mathcal E = {\mathbb P}\big ( \mathcal O_{|K_X|} \oplus \mathcal O_{|K_X|}(-2) \big )$
with $\mathcal O_{\mathcal E}(1) = \mathcal O_{\mathcal E}(\sigma)$ and 
$f_*\mathcal O_{\mathcal E} = \mathcal O_{|K_X|}$,
$$
R^1f_*\big ( \w p_*\mathcal O_{\W X}(E) \big )
\cong \mathcal O_{|K_X|}(-5) \otimes \Big ( (f_*\mathcal O_{\mathcal E})^\vee 
      \otimes \mathcal O_{|K_X|}(-2)^\vee \Big )
\cong \mathcal O_{|K_X|}(-3).
$$
By \eqref{prop_iii.7}, $R^1f_*\Psi^*T_\Xtwo \cong \mathcal O_{|K_X|}(-1)$.
Finally, by \eqref{prop_iii.3}, we obtain 
\begin{equation}
\langle 1 \rangle_{0, \,\, \beta_{K_X} - 3\beta_2}^{\Xtwo}
= \deg \, [\Mbar]^{\text{\rm vir}} = \deg \, c_1 \big (R^1f_*\Psi^*T_\Xtwo \big ) = -1.
\tag*{$\qed$}
\end{equation}


\begin{thebibliography}{ABCD}

\bibitem[AIK]{AIK} A. Altman, A. Iarrobino, S. Kleiman,
{\em Irreducibility of the compactified Jacobian}.
Real and complex singularities 
(Proc. 9th Nordic Summer School, Oslo, 1976), 1-12.
Sijthoff and Noordhoff, Groningen (1977).

\bibitem[ACGH]{ACGH} E. Arbarello, M. Cornalba, P. Griffiths, and J. Harris, 
{\em Geometry of algebraic curves}, Vol. I, Grundl. der Math. Wiss. {\bf 267} 
(Springer-Verlag, 1985).

\bibitem[BPV]{BPV} W. Barth, C. Peters, A. Van de Ven,
{\em Compact Complex Surfaces}, Ergebnisse der Mathematik und 
ihrer Grenz\-gebiete 3. Folge {\bf 4}. 
Springer, Berlin Heidelberg New York Tokyo, 1984.

\bibitem[Bea1]{Bea1} A. Beauville,
{\em Vari\'et\'es K\"ahleriennes dont la premi\'ere classe 
de Chern est nulle}. J. Diff. Geom. {\bf 18} (1983), 755-782.

\bibitem[Bea2]{Bea2} A. Beauville,
{\em Vari\'et\'es K\"ahleriennes avec $c_1 = 0$}. Geometry of $K3$ surfaces: 
moduli and periods. Palaiseau 1981/1982, Ast\' erisque {\bf 126} (1985), 181-192.

\bibitem[Beh]{Beh} K. Behrend,
{\em Gromov-Witten invariants in algebraic geometry}. 
Invent. Math. {\bf 127} (1997) 601-617.    

\bibitem[BF]{BF} K. Behrend, B. Fantechi,
{\em The intrinsic normal cone},
 Invent. Math. {\bf 128} (1997) 45-88. 

\bibitem[BT]{BT} A. Bertram, M. Thaddeus,
{\em On the quantum cohomology of a symmetric product of an algebraic curve}.
Duke Math. J. {\bf 108} (2001), 329-362. 

\bibitem[FP]{FP} C. Faber, R. Pandharipande,
{\em Hodge integrals and Gromov-Witten theory}. 
Invent. Math. {\bf 139} (2000), 173-199.

\bibitem[Fog1]{Fog1} J. Fogarty,
{\em Algebraic families on an algebraic surface}. Amer. J. Math.
{\bf 90} (1968), 511-520.

\bibitem[Fog2]{Fog2} J. Fogarty,
{\em Algebraic families on an algebraic surface. II: The Picard scheme of 
the punctual Hilbert scheme}. Amer. J. Math. {\bf 95} (1973), 660-687.


\bibitem[GP]{GP} T. Graber, R. Pandharipande,
{\em Localization of virtual classes}. Invent. Math. {\bf 135} (1999), 487-518. 

\bibitem[Gro]{Gro} I.~Grojnowski,
{\em Instantons and affine algebras I: the Hilbert scheme and
vertex operators}, Math. Res. Lett. {\bf 3} (1996), 275--291.

\bibitem[Iar]{Iar} A. Iarrobino, 
{\em Punctual Hilbert schemes}.
Mem. Amer. Math. Soc. {\bf 188} (1977).

\bibitem[KL1]{KL1} Y. Kiem, J. Li, 
{\em Gromov-Witten invariants of varieties with holomorphic $2$-forms}.
Preprint.

\bibitem[KL2]{KL2} Y. Kiem, J. Li, 
{\em Localizing virtual cycles by cosections}. 
J. Amer. Math. Soc. {\bf 26} (2013), 1025-1050.

\bibitem[LP]{LP} J. Lee, T. Parker,
{\em A structure theorem for the Gromov-Witten invariants of K\" ahler 
surfaces}. J. Differential Geom. {\bf 77} (2007), 483-513.

\bibitem[LL]{LL} J. Li, W.-P. Li,
{\em Two point extremal Gromov-Witten invariants of Hilbert
schemes of points on surfaces}. Math. Ann. {\bf 349} (2011), 839-869.

\bibitem[LT1]{LT1}  J. Li, G. Tian,
{\em Virtual moduli cycles and Gromov-Witten invariants of
algebraic varieties}, J. A.M.S. {\bf 11} (1998), 19-174.

\bibitem[LT2]{LT2}  J. Li, G. Tian,
{\em Virtual moduli cycles and Gromov-Witten invariants of general symplectic
manifolds}, Topics in symplectic $4$-manifolds (Irvine, CA, 1996),
First Int. Press Lect. Ser., I, Internat.
Press, Cambridge, MA, (1998) 47--83.

\bibitem[LQ1]{LQ1} W.-P. Li, Z. Qin,
{\em On $1$-point Gromov-Witten invariants of the Hilbert schemes
of points on surfacs}, Proceedings of
8th G\" okova Geometry-Topology Conference (2001).
Turkish J. Math. {\bf 26} (2002), 53-68.

\bibitem[LQ2]{LQ2} W.-P. Li, Z. Qin,
{\em The Cohomological Crepant Resolution Conjecture for the Hilbert-Chow morphisms}.
Preprint.

\bibitem[LQZ]{LQZ} W.-P. Li, Z. Qin, Q. Zhang,
{\em Curves in the Hilbert schemes of points on surfaces}.
Contemp. Math. {\bf 322} (2003), 89-96.



\bibitem[Mum]{Mum} D. Mumford,
{\em Towards an enumerative geometry of the moduli space of curves,}
in Arithmetic and Geometry (M. Artin and J. Tate, eds.), Part II, Birkh\"auser,
1983, 271-328.

\bibitem[Nak]{Nak} H. Nakajima,
{\em Heisenberg algebra and Hilbert schemes of points on
projective surfaces}, Ann. Math. {\bf 145} (1997), 379-388.


\bibitem[Tau]{Tau} C. Taubes, 
{\it {\rm GR = SW:} counting curves and connections.}
J. Diff. Geom. {\bf 52} (1999), 453-609.

\end{thebibliography}
\end{document}